\documentclass{amsart}
\usepackage[english]{babel}
\usepackage[utf8]{inputenc}
\usepackage{amsmath, amssymb,epic,graphicx,mathrsfs,enumerate}
\usepackage[all]{xy}
\usepackage{xcolor}
\usepackage{comment}
\usepackage{enumitem}
\usepackage{hyperref}
\usepackage[noabbrev,capitalize]{cleveref}
\usepackage[]{frontespizio}
\usepackage{amssymb}
\usepackage{amsmath}
\usepackage{amsthm}
\usepackage{graphicx}
\usepackage{hyperref}
\usepackage{amscd}
\usepackage{enumerate}
\usepackage{mathrsfs}
\usepackage{amsmath}
\usepackage{amsthm}
\usepackage{amssymb}
\usepackage{latexsym}
\usepackage{epsfig}
\DeclareMathOperator{\nil}{\mathcal N}
\DeclareMathOperator{\perm}{Sym}
\DeclareMathOperator{\alt}{Alt}

\DeclareMathOperator{\fit}{Fit}

\DeclareMathOperator{\Z}{Z}

\newcommand{\sym}{\mathrm{Sym}}

\newcommand{\la}{\langle}
\newcommand{\ra}{\rangle}

% THEOREMS ---------------------------------------------------------------
%%%%%%%%%\theoremstyle{plain}
\newtheorem{thm}{Theorem}
\newtheorem{cor}[thm]{Corollary}
 \newtheorem{lemma}[thm]{Lemma}
\newtheorem{prop}[thm]{Proposition} 
 
\newtheorem{question}[thm]{Question}

\numberwithin{equation}{section}

\renewcommand{\footnote}{\endnote}
\newcommand{\ignore}[1]{}\makeglossary

%%%%%%%%%%%%%%%%%%%%%%%%%%%%%%%%%%%%%%%%%%%%%%%%%%%%%%%%%%%%%
%%%%%%%%%%%%%%%%%%%%%%%%%%%%%%%%%%%%%%%%%%%%%%%%%%%%%%%%%%%%%
%%%%%%%%%%%%%%%%%%%%%%%%%%%%%%%%%%%%%%%%%%%%%%%%%%%%%%%%%%%%%

\title[]{Profinite groups with many elements with large nilpotentizer and generalizations}

\author{Martino Garonzi}
\address{Martino Garonzi. University of Ferrara (Italy), Dipartimento di Matematica e Informatica.
ORCID: https://orcid.org/0000-0003-0041-3131}
\email{martino.garonzi@unife.it}

\author{Andrea Lucchini}
\address{Andrea Lucchini. University of Padova (Italy), Dipartimento di Matematica ``Tullio Levi Civita''. ORCID: https://orcid.org/0000-0002-2134-4991}
\email{lucchini@math.unipd.it}

\author{Nowras Otmen} 
\address{Nowras Otmen. University of Padova (Italy), Dipartimento di Matematica ``Tullio Levi Civita''. ORCID: https://orcid.org/0009-0009-8092-1689}
\email{nowras.naufel@math.unipd.it}

\thanks{Project funded by the EuropeanUnion - NextGenerationEU under the National Recovery and Resilience Plan (NRRP), Mission 4 Component 2 Investment 1.1- Call PRIN 2022 No. 104 of February 2, 2022 of Italian Ministry of University and Research; Project 2022PSTWLB (subject area: PE - Physical Sciences and Engineering) ``Group Theory and Applications''. The second author is member of GNSAGA (INDAM)}

\date{}

\subjclass[2020]{20E18}

\keywords{Profinite groups, pronilpotent groups, prosolvable groups}

\begin{document}
	\bibliographystyle{amsplain}

\begin{abstract}
Given a profinite group $G$ and a family $\mathcal{F}$ of finite groups closed under taking subgroups, direct products and quotients, denote by $\mathcal{F}(G)$ the set of elements $g \in G$ such that $\{x \in G\ |\ \langle g,x \rangle \ \mbox{is a pro-}\mathcal{F} \mbox{ group}\}$ has positive Haar measure. We investigate the properties of $\mathcal{F}(G)$ for various choices of $\mathcal{F}$ and its influence on the structure of $G$.
\end{abstract}

\maketitle

\section{Introduction}
Let $\mathcal F$ be a class of finite groups closed under taking subgroups, quotients and direct products. Given a profinite group $G$ and $x$ in $G$, let $\mathcal F_G(x)$ be the set of the elements $g\in G$ such that $\langle g, x\rangle$ (here and henceforth, 
for any subset $X$ of $G$, writing $\langle X\rangle$ we mean the smallest \emph{closed} subgroup of $G$ generated by $X$) is a pro-$\mathcal F$ group. Let $\mu_G$ be the normalized Haar measure of $G$. By \cite[Section 2]{aemp}, $\mathcal{F}_G(x)$ is closed in $G$, hence measurable, and 
$$\mu_G(\mathcal{F}_G(x)) = \inf_{N \unlhd_{\circ} G} \frac{|\mathcal{F}_{G/N}(xN)|}{|G/N|}.$$

For a positive real number $\varepsilon,$ let $\mathcal F_\varepsilon(G)$ be the set of the elements $g\in G$ such that $\mu_G(\mathcal F_G(g))\geq \varepsilon$. Given an open normal subgroup $N$ of $G$, let
$$\mathcal F_{\varepsilon,N}(G)=\{g\in G\mid \mu_{G/N}(\mathcal F_{G/N}(gN))\geq \varepsilon\}.$$ Notice that $\mathcal F_{\varepsilon,N}(G)$ is closed in $G$ and consequently $\mathcal F_\varepsilon(G)=\cap_N \mathcal F_{\varepsilon,N}(G)$ is closed. But then
$\mathcal F(G)=\cup_{n\in \mathbb N}\mathcal F_{1/n}(G)$ is a Borel subset of $G$ and coincides with the set of the elements $g\in G$ such that $\mu(\mathcal F_G(g))$ is positive.

The previous definitions give rise to the following questions:
\begin{question}\label{q1}Is there a group-theoretical characterization of the elements $g\in G$ with the property that $\mu(\mathcal F_G(g))$ is positive? 
\end{question}

\begin{question}\label{q2}Is $\mathcal F(G)$ a closed  subset of $G$ for every profinite group $G$?
\end{question}

\begin{question}\label{q3}Is $\mathcal F(G)$ a subgroup of $G$ for every profinite group $G$?
\end{question}

The answers to \cref{q2} and \cref{q3} are in general negative. We analyze \cref{q2} in further detail in \cref{families}. For \cref{q3}, if we take a class $\mathcal F$ that doesn't contain a cyclic group of order a power of a prime $p$, then this is already not true. Suppose $C_{p^m} \notin \mathcal F$ and consider $C_q \in \mathcal F$ with $q\neq p$. Let $G=P \rtimes C_q$ be a Frobenius group with $\exp(P)=p^m$. Then $\mathcal F(G)$ is not a subgroup of $G$, since if $x \in G$ has order $q$ then $x^P=xP$ so every element of $P$ is a product of two elements of order $q$.

In the particular case of the class $\mathcal A$ of the finite abelian groups, we can answer all three questions. Since $\mathcal A_G(g)=C_G(g)$ for every $g\in G$, the elements $g$ such that $\mu(\mathcal A_G(g))>0$ are those with finite conjugacy class. Then, it follows that $\mathcal A(G)$ coincides with the FC-center of $G$, which is a (not necessarily closed) subgroup of $G$. If $G=\prod_{i \in I} G_i$ is a direct product of groups, the corresponding restricted direct product is the subset of the direct product consisting of tuples $(g_i)_i$ such that $\{i \in I\ |\ g_i \neq 1\}$ is finite. An example of an FC-center that is not closed is the following: consider $G$ the direct product of infinitely many nonabelian simple groups, in which case the FC-center of $G$ coincides with the restricted direct product of the factors, which is dense but not closed in $G.$ In particular, if $\mu(\mathcal A(G))>0,$ then $\mathcal A(G)$ is an open normal subgroup of $G$ and it follows from \cite[Lemma 2.6]{sh} that the derived subgroup of $\mathcal A(G)$ is finite (which implies that  $\mathcal A(G)$ is virtually abelian). 

For most of the natural examples of classes $\mathcal F$, one can find at least one profinite group $G$ such that the set $\mathcal F(G)$ is not closed. Although we cannot fully characterize the families for which this occurs, the following result is a step in this direction.

\begin{thm}\label{allfingrps}
Let $\mathcal F$ be a class of finite groups closed under taking subgroups, quotients and direct products. Assume that $\mathcal F$ satisfies the following property: for every prime $p$, if $\mathcal F$ contains a finite nontrivial $p$-group, then it contains all the cyclic finite $p$-groups. Then $\mathcal{F}(G)$ is closed in $G$ for every profinite group $G$ if and only if $\mathcal F$ is the class of all finite groups.
\end{thm}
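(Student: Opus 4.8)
The plan is to prove the two implications separately, the forward one being immediate: if $\mathcal F$ is the class of all finite groups then every $\langle g,x\rangle$ is pro-$\mathcal F$, so $\mathcal F_G(g)=G$ and $\mathcal F(G)=G$ is closed. For the converse I would argue by contraposition, assuming $\mathcal F$ is a proper class (and, excluding the degenerate class $\mathcal F=\{1\}$, that $\mathcal F$ contains a nontrivial group), and produce a profinite $G$ with $\mathcal F(G)$ not closed. Everything rests on one computational lemma: if $G=\prod_n H_n$ is a Cartesian product of finite groups, then $\langle g,x\rangle$ is pro-$\mathcal F$ \ifa $\langle g_n,x_n\rangle\in\mathcal F$ for every $n$ (a finite quotient of $\langle g,x\rangle$ is a quotient of a subgroup of $\prod_{n\in S}\langle g_n,x_n\rangle$ for some finite $S$, and $\mathcal F$ is closed under subgroups, quotients and direct products), whence $\mathcal F_G(g)=\prod_n\mathcal F_{H_n}(g_n)$ and, via the $\inf$-formula of the introduction evaluated along the cofinal chain $N=\prod_{n>k}H_n$,
$$\mu_G(\mathcal F_G(g))=\prod_{n}\frac{|\mathcal F_{H_n}(g_n)|}{|H_n|}.$$
Thus $g\in\mathcal F(G)$ \ifa $\sum_n\bigl(1-|\mathcal F_{H_n}(g_n)|/|H_n|\bigr)<\infty$.

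Writing $\pi=\{p:\ C_p\in\mathcal F\}$, the hypothesis on $\mathcal F$ gives (since $\langle x\rangle$ is a direct product of cyclic groups of prime-power order) that $\langle x\rangle\in\mathcal F$ \ifa the order of $x$ is a $\pi$-number; consequently $\mathcal F_H(h)\subseteq\mathcal F_H(1)=\{x:\ |x|\text{ is a }\pi\text{-number}\}$ for every $h$, so the identity is always the ``best'' coordinate and $|\mathcal F_H(1)|/|H|$ equals the proportion of $\pi$-order elements of $H$. To make $\mathcal F(G)$ non-closed I take a base point $g=(b_n)_n$ with $|\mathcal F_{H_n}(b_n)|/|H_n|>0$ for all $n$ but $\sum_n(1-|\mathcal F_{H_n}(b_n)|/|H_n|)=\infty$, and approximate it by $g^{(k)}=(b_1,\dots,b_k,1,1,\dots)$. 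Then $g^{(k)}\to g$ coordinatewise and $\mu_G(\mathcal F_G(g))=0$, while $\mu_G(\mathcal F_G(g^{(k)}))>0$ \emph{provided} $\sum_n(1-|\mathcal F_{H_n}(1)|/|H_n|)<\infty$, i.e.\ provided the proportion of $\pi$-order elements of $H_n$ tends to $1$ fast enough. So the task reduces to finding such groups $H_n$ carrying a suitable moderate element $b_n$.

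If $\pi$ is the set of all primes, then $|\mathcal F_H(1)|/|H|=1$ for every finite $H$, so the tail condition is free; here I take the constant sequence $H_n=L$, where $L$ is any $2$-generated finite group not in $\mathcal F$. Such an $L$ exists, for otherwise every finite group, being embeddable by Cayley in the $2$-generated group $\perm(n)$, would lie in $\mathcal F$. Writing $L=\langle b,c\rangle$, the element $b$ satisfies $1\in\mathcal F_L(b)$ and $c\notin\mathcal F_L(b)$, so $0<|\mathcal F_L(b)|/|L|<1$ and $b_n=b$ gives a divergent series. In the remaining case there is a prime $p\notin\pi$; fix also $q\in\pi$. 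The key idea, and the main obstacle, is to make the elements of order divisible by $p$ (the ones forcing failure of membership in $\mathcal F$) \emph{rare}, which is impossible when $C_p$ is a quotient of $H_n$. I therefore use Frobenius groups $H_n=\mathbb F_{p^{m_n}}\rtimes C_{q^n}$, with $C_{q^n}$ acting by multiplication by a root of unity of order $q^n$ ($m_n$ the order of $p$ modulo $q^n$). Because the action is fixed-point-free, the relative-trace operator $\sum_{i}h^i$ vanishes for every nontrivial $h\in C_{q^n}$, so every element outside the kernel has order dividing $q^n$, hence of $\pi$-order; the $p$-singular elements are exactly the nonidentity kernel elements, a fraction $<q^{-n}$ of $H_n$. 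Thus $|\mathcal F_{H_n}(1)|/|H_n|>1-q^{-n}$ and the tail series converges. Taking $b_n$ a generator of the complement, malnormality of Frobenius complements gives $\langle b_n,x\rangle\in\mathcal F$ \ifa $x\in\langle b_n\rangle$ (otherwise $\langle b_n,x\rangle$ meets the kernel, hence contains a copy of $C_p\notin\mathcal F$), so $|\mathcal F_{H_n}(b_n)|/|H_n|=p^{-m_n}\in(0,1)$ and $\sum_n(1-p^{-m_n})=\infty$.

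Finally I would assemble $G=\prod_n H_n$ and verify from the product formula that $g^{(k)}\to g$, that $\mu_G(\mathcal F_G(g))=\prod_n p^{-m_n}=0$, and that $\mu_G(\mathcal F_G(g^{(k)}))=\prod_{n\le k}p^{-m_n}\cdot\prod_{n>k}(1-\varepsilon_n)>0$ with $\sum_n\varepsilon_n<\infty$; hence $g\in\overline{\mathcal F(G)}\setminus\mathcal F(G)$. The hardest part is exactly the construction in the second case: one needs a family of finite groups in which a forbidden prime divides the order yet its elements are asymptotically negligible, while a single element still generates a non-$\mathcal F$ subgroup with all but a vanishing proportion of its partners; the fixed-point-free (Frobenius) action is what reconciles these two opposing requirements.
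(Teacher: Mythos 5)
Your proposal is correct, and although it follows the same skeleton as the paper --- the trivial direction, then a case split according to whether $\mathcal F$ contains all finite cyclic groups, with the counterexample built as an infinite product of finite groups and analysed via the product formula $\mu_G(\mathcal F_G(g))=\prod_n \mu_{H_n}(\mathcal F_{H_n}(g_n))$ (which the paper also uses, with less justification than you supply) --- both of your constructions genuinely differ from the paper's. In the first case the paper takes $G=\alt(m)^{\mathbb N}$ for some $\alt(m)\notin\mathcal F$ and invokes the Guralnick--Kantor theorem on probabilistic $2$-generation of simple groups to get $0<\mu(\mathcal F_{\alt(m)}(x))<1$ for \emph{every} $x\neq 1$, which lets it identify $\mathcal F(G)$ exactly as the (dense, non-closed) restricted direct power; you need only one $2$-generated group $L=\langle b,c\rangle\notin\mathcal F$ (Cayley embedding plus $2$-generation of $\perm(n)$) and the single witness $c\notin\mathcal F_L(b)$, then exhibit $g^{(k)}\to g$ with $g^{(k)}\in\mathcal F(G)$ and $g\notin\mathcal F(G)$ --- a cheaper argument that forgoes describing $\mathcal F(G)$. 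In the second case both proofs use Frobenius groups with the forbidden prime in the kernel and the allowed prime in the complement, and both hinge on the point you correctly isolate as the crux: the kernel's proportion must tend to $0$ summably along the sequence of factor groups, since otherwise even the identity would have measure-zero $\mathcal F$-izer and $\mathcal F(G)=\varnothing$ would be closed (this is exactly why a constant sequence of Frobenius groups cannot work). The paper achieves this by fixing one Frobenius group $Q\rtimes C_p$ with $Q=\mathbb F_{q^n}$ and enlarging the complement through the twisted wreath-type groups $Y_t=Q^{p^t}\langle g\rangle$ with $\langle g\rangle$ cyclic of order $p^{t+1}$, which requires some bookkeeping (uniqueness of the Sylow subgroup containing $g^{p^t}$, conjugating elements of order $p$ into $\langle g^{p^t}\rangle$); you achieve it by growing the complement directly, $H_n=\mathbb F_{p^{m_n}}\rtimes C_{q^n}$, at the negligible cost of changing the field with $n$. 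In both arguments the hypothesis of the theorem enters at the same spot: it guarantees that the growing cyclic complements ($C_{q^n}$, respectively $C_{p^{t+1}}$) stay in $\mathcal F$.

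Two small remarks. First, your general criterion ``$g\in\mathcal F(G)$ iff $\sum_n\bigl(1-|\mathcal F_{H_n}(g_n)|/|H_n|\bigr)<\infty$'' requires the proviso that every factor be positive (if some $\mathcal F_{H_n}(g_n)=\varnothing$ the product vanishes while the sum may converge); all your applications satisfy this, so nothing breaks, but state it. Second, your explicit exclusion of $\mathcal F=\{1\}$ is not a blemish but a necessity: for that class $\mathcal F(G)$ is always $\varnothing$ or $\{1\}$, hence closed, so the stated equivalence actually fails for it; the paper excludes this case only implicitly, since its second case presupposes the existence of a prime $p$ with $C_p\in\mathcal F$.
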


We also show in \cref{exp2} that, if the exponent of the groups in $\mathcal{F}$ is bounded, then we cannot hope to have the above result for $\mathcal{F}$.

\

Recall that the prosolvable radical of a profinite group $G$ is the subgroup of $G$ generated by all the closed normal prosolvable subgroups of $G$. It is denoted by $R(G)$ and it is a closed normal prosolvable subgroup of $G$. The following theorem provides a characterization of $\mathcal F(G)$ for $\mathcal F = \mathcal S$ the class of finite solvable groups.

%and provides, along with \cref{thodd}, a generalization of \cite[Theorem 1.2]{aemp}.

\begin{thm}\label{descs}
Let $\mathcal{S}$ be the class of finite solvable groups. If $G$ is a profinite group, then $\mathcal{S}(G)$ is a subgroup of $G$, $R(G) \leq \mathcal{S}(G)$ and $\mathcal{S}(G)/R(G)$ is the FC-center of $G/R(G)$. Moreover, if $\mathcal S(G)$ is closed, then $\mathcal S(G)/R(G)$ is finite.
\end{thm}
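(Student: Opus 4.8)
Write $\bar G=G/R(G)$ and let $\pi\colon G\to\bar G$ be the quotient map. I would first record that, for $g,x\in G$, the group $\langle g,x\rangle$ is pro-solvable if and only if $\langle\pi(g),\pi(x)\rangle$ is: the forward direction holds because the latter is a quotient, and for the converse $\langle g,x\rangle\cap R(G)$ is pro-solvable and $\langle g,x\rangle/(\langle g,x\rangle\cap R(G))\cong\langle\pi(g),\pi(x)\rangle$ is pro-solvable, so $\langle g,x\rangle$ is an extension of a pro-solvable group by a pro-solvable group, hence pro-solvable. Consequently $\mathcal S_G(g)=\pi^{-1}(\mathcal S_{\bar G}(\pi(g)))$, whence $\mu_G(\mathcal S_G(g))=\mu_{\bar G}(\mathcal S_{\bar G}(\pi(g)))$ and $\mathcal S(G)=\pi^{-1}(\mathcal S(\bar G))$. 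Since $\mathcal S_{\bar G}(1)=\bar G$ we have $1\in\mathcal S(\bar G)$, so $R(G)=\pi^{-1}(1)\le\mathcal S(G)$. It therefore suffices to prove that, when $R(G)=1$, the set $\mathcal S(G)$ coincides with the FC-center of $G$: this is a subgroup, so pulling back along $\pi$ yields simultaneously that $\mathcal S(G)$ is a subgroup, that $R(G)\le\mathcal S(G)$, and that $\mathcal S(G)/R(G)$ is the FC-center of $\bar G$, establishing the first three assertions.

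\textbf{Step 2: the easy inclusion and the shape of the hard one.}
Assume henceforth $R(G)=1$ and let $F$ be the FC-center. If $g\in F$ then $C_G(g)$ is open and every $x\in C_G(g)$ makes $\langle g,x\rangle$ abelian, hence pro-solvable; thus $\mu_G(\mathcal S_G(g))\ge\mu_G(C_G(g))>0$ and $F\subseteq\mathcal S(G)$. For the reverse inclusion I must show that $g\in\mathcal S(G)$ forces $C_G(g)$ to be open. Put $\varepsilon=\mu_G(\mathcal S_G(g))>0$. Using $\mu_G(\mathcal S_G(g))=\inf_{N\unlhd_{\circ}G}|\operatorname{Sol}_{G/N}(gN)|/|G/N|$, where $\operatorname{Sol}_H(x)=\{y\in H:\langle x,y\rangle\text{ is solvable}\}$, we get $|\operatorname{Sol}_{G/N}(gN)|\ge\varepsilon|G/N|$ for every open normal $N$.

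\textbf{Step 3: the finite-group input and the passage to the limit.}
The crux is a uniform finite-group estimate: \emph{for every $\varepsilon>0$ there is $C(\varepsilon)$ such that, for any finite group $H$ and any $x\in H$ with $|\operatorname{Sol}_H(x)|\ge\varepsilon|H|$, the image of $x$ in $H/R(H)$ has at most $C(\varepsilon)$ conjugates.} Granting this, for each open normal $N$ let $M_N\unlhd G$ be the preimage of $R(G/N)$; then $G/M_N=(G/N)/R(G/N)$, and the estimate gives $|(gM_N)^{G/M_N}|\le C(\varepsilon)$. Since the image of a prosolvable normal subgroup is solvable, $R(G)N/N\le R(G/N)$, so $M_N\supseteq R(G)$; as $\bigcap_N M_N$ is a closed normal subgroup all of whose finite images are solvable, it is prosolvable, whence $\bigcap_N M_N=R(G)=1$. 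Now set $C_N=\{h\in G:[h,g]\in M_N\}$, the preimage of the centralizer of $gM_N$ in $G/M_N$; it is an open subgroup with $[G:C_N]=|(gM_N)^{G/M_N}|\le C(\varepsilon)$, and $N'\subseteq N$ gives $M_{N'}\subseteq M_N$, hence $C_{N'}\subseteq C_N$, so the $C_N$ are downward directed. Because $\bigcap_N M_N=1$ we have $\bigcap_N C_N=\{h:[h,g]=1\}=C_G(g)$; the bounded indices $[G:C_N]\le C(\varepsilon)$ are integers increasing along the directed family, so they stabilise, forcing $C_G(g)=C_N$ for small $N$ and $[G:C_G(g)]\le C(\varepsilon)<\infty$. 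Thus $g^G$ is finite, $g\in F$, and $\mathcal S(G)=F$. I expect the finite-group estimate to be the main obstacle: one passes to $\bar H=H/R(H)$, whose generalized socle is a product $\prod_i T_i$ of nonabelian finite simple groups, and bounds the number of factors on which $\bar x$ acts nontrivially by a uniform probabilistic estimate in finite almost simple groups --- that a fixed nontrivial (inner or outer) automorphism of a finite simple group $T$ together with a uniformly random inner automorphism generates a nonsolvable subgroup with probability bounded below independently of $T$ --- the resulting $(1-\delta)$-factors multiplying across distinct factors to bound the support, and hence $|\bar x^{\bar H}|$, in terms of $\varepsilon$ alone.

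\textbf{Step 4: the closed case.}
Finally suppose $\mathcal S(G)$ is closed. Then $F:=\mathcal S(G)/R(G)$, the FC-center of $\bar G$, is closed. Its prosolvable radical $R(F)$ is characteristic in the normal subgroup $F\unlhd\bar G$, hence normal and prosolvable in $\bar G$, so $R(F)\le R(\bar G)=1$. Thus $F$ is a closed profinite FC-group with trivial prosolvable radical, and $\soc(F)=\prod_\omega T_\omega$ is a product of nonabelian finite simple groups. Were infinitely many factors present, the element $s=(t_\omega)$ with every $t_\omega\ne1$ would satisfy $s^F\supseteq\prod_\omega t_\omega^{T_\omega}$, an infinite set, contradicting the FC-property; hence $\soc(F)$ is finite. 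Since $C_F(\soc(F))=1$, the group $F$ embeds into the finite group $\Aut(\soc(F))$, so $\mathcal S(G)/R(G)=F$ is finite.
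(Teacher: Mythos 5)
Your Steps 1 and 2, and the limiting argument in Step 3 (the subgroups $M_N$, the directedness, and the stabilisation of the bounded indices $[G:C_N]$), are all correct, and your overall architecture — reduce to $R(G)=1$ and prove $\mathcal S(G)$ equals the FC-center — is exactly the paper's. But the entire proof hinges on your ``uniform finite-group estimate,'' and there is a genuine gap there. The probabilistic input you invoke (a nontrivial automorphism of a finite simple group $T$ together with a random inner automorphism generates a nonsolvable group with probability $\geq \delta$, uniformly in $T$ — essentially \cite[Corollary 1.3]{fgg}) yields, after multiplying $(1-\delta)$-factors, only a bound on the \emph{number} of simple factors of the socle of $H/R(H)$ on which $\bar x$ acts nontrivially. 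It does not bound $|\bar x^{\bar H}|$: the word ``hence'' in your last sentence of Step 3 is a non sequitur. Take $H=T$ simple and $x \neq 1$; then $x$ fails to centralize exactly one factor, yet $|x^H|=|T:C_T(x)|$ is unbounded as $|T|\to\infty$. So your estimate, applied to this case, asserts that $|\mathcal S_T(x)|\geq \varepsilon|T|$ with $x\neq 1$ forces $|T|$ to be bounded in terms of $\varepsilon$ — i.e., that nontrivial elements of large simple groups have solvabilizers of small density. That statement is strictly stronger than the uniform $(1-\delta)$ bound you quote, and you neither prove it nor cite a reference for it. Since everything downstream (openness of $C_G(g)$, hence $\mathcal S(G)=$ FC-center) rests on this, the proof is incomplete at its crux.

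It is worth seeing how the paper sidesteps any such class-size estimate. It quotes the known characterization $\mathcal S(G)=\bigcup_n \Sigma_n(G)$ (\cite[Theorem 6]{solv}, \cite[Corollary 1.3]{fgg}): $g\in\mathcal S(G)$ iff $g$ centralizes all but boundedly many nonabelian chief factors in every finite quotient. Then, with $R(G)=1$, it picks an open normal $N$ all of whose nonabelian chief factors are centralized by $g$, sets $K=N\langle g\rangle$, and uses an elementary induction lemma (an element centralizing \emph{all} nonabelian chief factors lies in the radical) to get $g\in R(K)$; since $R(K)\cap N\leq R(N)=1$, one gets $[R(K),N]=1$, so $N\leq C_G(g)$ directly — no quantitative bound on conjugacy classes in finite quotients is ever needed. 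Finally, your Step 4 is also on shakier ground than necessary: for a closed profinite FC-group $F$ with $R(F)=1$ you assert that $\soc(F)$ exists, is a product of nonabelian finite simple groups, and has trivial centralizer. These facts are true but require an argument (e.g., that normal closures $\langle x^F\rangle$ are finite, via FC plus Schur plus triviality of the radical), which you omit; the paper instead just cites \cite[Lemma 2.6]{sh} to produce an abelian open normal subgroup of $F$, whose preimage is prosolvable and normal, hence trivial modulo $R(G)$, giving finiteness in two lines.
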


In \cite{aemp}, it is proven that if $\mathcal S(G) = G$, then $G$ is virtually prosolvable. We generalize this result in two different ways, both showing that you don't need \emph{every} element in $G$ to have a positive measure solvabilizer to guarantee virtual prosolvability.

\begin{thm}\label{musolpos} Let $G$ be a profinite group. If $\mu(\mathcal S(G)) > 0$, then $G$ is virtually prosolvable.
\end{thm}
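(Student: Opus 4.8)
The plan is to pass to the quotient modulo the prosolvable radical and then exploit the description of $\mathcal S(G)$ supplied by \cref{descs}. Since $R(G)$ is closed and prosolvable, $G$ is virtually prosolvable precisely when $R(G)$ is open, i.e. when $H:=G/R(G)$ is finite; so it suffices to prove $|H|<\infty$. Write $\pi\colon G\to H$ for the quotient map. Because an extension of a prosolvable group by a prosolvable group is prosolvable, any closed normal prosolvable $K/R(G)\unlhd H$ pulls back to a closed normal prosolvable $K\unlhd G$, whence $K\le R(G)$; thus $R(H)=1$. By \cref{descs}, $\mathcal S(G)$ is a subgroup containing $R(G)$ and $\mathcal S(G)/R(G)$ is the FC-center of $H$, which in the notation of the introduction for the class $\mathcal A$ of finite abelian groups is exactly $\mathcal A(H)$. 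As $R(G)\le\mathcal S(G)$, we have $\mathcal S(G)=\pi^{-1}(\mathcal A(H))$, and since the pushforward of $\mu_G$ along $\pi$ is $\mu_H$, this gives $\mu_H(\mathcal A(H))=\mu_G(\mathcal S(G))>0$.

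Next I would feed this into the abelian analysis already recorded in the introduction: when $\mu(\mathcal A(H))>0$ the subgroup $Z:=\mathcal A(H)$ is open and normal in $H$, and its derived subgroup $[Z,Z]$ is finite by \cite[Lemma 2.6]{sh}. The decisive move is to distil from $Z$ a nontrivial open normal prosolvable subgroup, which will then clash with $R(H)=1$. Since $[Z,Z]$ is finite and characteristic in $Z\unlhd H$, it is normal in $H$, and the conjugation action of $H$ on the finite group $[Z,Z]$ has kernel $C_H([Z,Z])\unlhd H$. Put $N:=Z\cap C_H([Z,Z])=C_Z([Z,Z])$. Then $N$ is closed and normal in $H$; it is open because $Z/N$ embeds in the finite group $\aut([Z,Z])$, so $N$ has finite index in the open subgroup $Z$. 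Moreover $[N,N]\le[Z,Z]$ is centralized by $N$, so $[N,N]\le\Z(N)$, meaning $N$ is nilpotent of class at most $2$; in particular every finite continuous quotient of $N$ is solvable, so $N$ is prosolvable.

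To conclude, $N$ is a closed normal prosolvable subgroup of $H$, hence $N\le R(H)=1$, so $N=1$; but $N$ is open, forcing $|H|=[H:N]<\infty$. Therefore $H$ is finite, $[G:R(G)]<\infty$, and $G$ is virtually prosolvable. Given \cref{descs} and the abelian discussion, the genuinely delicate points are only two: the measure-theoretic transfer $\mu_H(\mathcal A(H))=\mu_G(\mathcal S(G))$, which rests on invariance of Haar measure under $\pi$ together with the containment $R(G)\le\mathcal S(G)$; and the construction of $N=C_Z([Z,Z])$, where one must simultaneously check openness, normality, closedness, and prosolvability before invoking $R(H)=1$. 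I expect this last extraction of an open normal prosolvable subgroup to be the crux, since it is the step that converts the purely measure-theoretic input into the structural contradiction that pins down the finiteness of $H$.
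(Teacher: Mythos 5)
Your proof is correct, and it takes a genuinely different route from the paper's. The paper's own argument never leaves $G$: it notes that $\mathcal S(G)$, being a measurable subgroup of positive measure, has finite index and is closed (via \cite[Lemma 1.1 (2)]{bm}), hence open; it then proves the relativization $\mathcal S(\mathcal S(G))=\mathcal S(G)$ by a chief-series counting argument (\cref{subseries}), and concludes by applying \cite[Theorem 1.2]{aemp} (the case $\mathcal S(K)=K$) to the open subgroup $K=\mathcal S(G)$. You instead quotient by $R(G)$, use \cref{descs} (which the paper proves before \cref{musolpos}, so there is no circularity) to identify $\mathcal S(G)$ with the full preimage of the FC-center $Z$ of $H=G/R(G)$, push the measure forward to get $\mu_H(Z)>0$, and then argue structurally: $Z$ is open normal with $[Z,Z]$ finite by \cite[Lemma 2.6]{sh}, and $N=C_Z([Z,Z])$ is an open normal subgroup of $H$ that is nilpotent of class at most $2$, hence prosolvable, which $R(H)=1$ forces to be trivial, so $H$ is finite. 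What each approach buys: yours bypasses \cite[Theorem 1.2]{aemp} and \cref{subseries} entirely (indeed it reproves the prosolvable case of that theorem, since $\mathcal S(G)=G$ gives $\mu(\mathcal S(G))=1$), at the price of leaning on the full strength of \cref{descs} and Shalev's structure theorem for profinite FC-groups; the paper's proof is the natural ``generalize by reduction'' argument and needs only the quoted theorem of \cite{aemp} plus elementary chief-series bookkeeping. Your construction of $N$ is also exactly the right care point: finiteness of $[Z,Z]$ alone does not make $Z$ prosolvable, because $[Z,Z]$ could be a nonsolvable finite group, and centralizing it is what manufactures the open normal prosolvable subgroup that clashes with $R(H)=1$. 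Two details worth making explicit in a write-up: the FC-center is Borel, so the fact that a measurable subgroup of positive measure is open (\cite[Lemma 1.1]{bm}, as the paper invokes for $\mathcal S(G)$) does apply to $Z$; and the transfer $\mu_{G/R(G)}(S)=\mu_G(\pi^{-1}(S))$ is precisely the remark the paper records immediately before the proof of \cref{descs}.
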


\begin{thm}\label{thodd}
 Let $G$ be a profinite group. If $\mu(\mathcal S_G(g)) > 0$ for every $g$ of odd order (as supernatural number), then $G$ is virtually prosolvable. 
\end{thm}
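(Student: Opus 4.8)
The plan is to pass to $\overline{G}=G/R(G)$ and show that the hypothesis forces $\overline{G}$ to be finite; since $R(G)$ is prosolvable, ``$\overline{G}$ finite'' is exactly ``$R(G)$ open'', i.e. $G$ virtually prosolvable. First I would record that $R(\overline{G})=1$: any closed normal prosolvable subgroup $K/R(G)$ of $\overline{G}$ pulls back to a closed subgroup $K\unlhd G$ that is an extension of the prosolvable group $R(G)$ by a prosolvable group, hence prosolvable, hence contained in $R(G)$. Next, by \cref{descs} we have $\mathcal S(G)/R(G)=\operatorname{FC}(\overline{G})$, so under our hypothesis the image in $\overline{G}$ of \emph{every} odd-order element of $G$ is an FC-element of $\overline{G}$.

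The second step upgrades this to: every element of $\overline{G}$ of odd order (as a supernatural number) lies in $\operatorname{FC}(\overline{G})$. Given such a $\bar g$, let $H$ be the closed preimage in $G$ of $\overline{\langle \bar g\rangle}$. Since $\overline{\langle \bar g\rangle}$ is procyclic of odd order and $R(G)$ is prosolvable, $H$ is prosolvable, so it admits Hall $\pi$-subgroups for $\pi$ the set of odd primes, and by profinite Hall theory the image of such a Hall subgroup is a Hall $\pi$-subgroup of $H/R(G)=\overline{\langle \bar g\rangle}$, i.e. the whole group. This produces an odd-order element $g\in G$ with $gR(G)=\bar g$, and by the first step $\bar g\in\operatorname{FC}(\overline{G})$.

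The heart of the argument is a structural dichotomy for the socle. Invoking the structure theory of profinite groups with trivial prosolvable radical (as already set up for \cref{musolpos}), the socle $\soc(\overline{G})$ is a Cartesian product $\prod_{i\in I}T_i$ of finite nonabelian simple groups, and it is self-centralizing, $C_{\overline{G}}(\soc(\overline{G}))=1$. I claim $I$ is finite. Each $T_i$, being nonabelian simple, has order divisible by an odd prime (it is not a $2$-group) and so contains a nontrivial element $t_i$ of odd order; then $g=(t_i)_{i\in I}\in\soc(\overline{G})\le\overline{G}$ has odd order as a supernatural number, hence is an FC-element of $\overline{G}$ by the second step. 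But the conjugacy class of $g$ already inside $\soc(\overline{G})$ equals $\prod_{i\in I} t_i^{T_i}$, a product of nontrivial conjugacy classes, which is infinite as soon as $I$ is infinite — contradicting finiteness of the $\overline{G}$-class of $g$. Hence $I$ is finite and $\soc(\overline{G})$ is finite.

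Finally, since $\soc(\overline{G})$ is finite and self-centralizing, $\overline{G}$ embeds into $\Aut(\soc(\overline{G}))$, which is finite; thus $\overline{G}$ is finite and $G$ is virtually prosolvable. I expect the main obstacle to be making the socle input fully rigorous in the profinite setting — namely that a profinite group with trivial prosolvable radical has a socle that is a Cartesian product of finite nonabelian simple groups, together with the self-centralizing property (equivalently, that a nontrivial closed normal subgroup with trivial prosolvable radical has a nontrivial minimal closed normal subgroup). Once this is available from the development of \cref{musolpos}, the decisive new ingredient is the ``diagonal'' odd-order element $g=(t_i)_i$ supported on all simple factors, with the Hall-subgroup lifting through $R(G)$ being the only other point that needs care.
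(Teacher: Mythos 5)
Your steps 1--3 are fine: $R(G/R(G))=1$, the appeal to \cref{descs}, and the Hall-subgroup lifting of odd-order elements through the prosolvable radical are all correct. But the heart of your argument rests on a claim that is false: a profinite group with trivial prosolvable radical need \emph{not} have any nontrivial minimal closed normal subgroup, so its socle can be trivial and in particular is not self-centralizing. The paper itself points to the counterexample: the hereditarily just-infinite, non-virtually-prosolvable groups obtained as inverse limits of infinitely iterated wreath products of nonabelian simple groups (cited as \cite{mv} in Section \ref{solv}). Such a group $G$ has $R(G)=1$, yet it has no minimal closed normal subgroup at all: in an infinite just-infinite profinite group every nontrivial closed normal subgroup is open, and a minimal open normal subgroup would have to lie inside every open normal subgroup, hence inside their intersection, which is trivial. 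So $\soc(G)=1$, the identity $C_{\overline{G}}(\soc(\overline{G}))=1$ fails, and the final embedding of $\overline{G}$ into $\Aut(\soc(\overline{G}))$ says nothing. Nothing in the development of \cref{musolpos} supplies the structure theory you invoke; that proof works with chief factors of finite quotients (via Lemma \ref{subseries} and the $\Sigma_n$-characterization of $\mathcal S(G)$), not with minimal closed normal subgroups of $G$ itself.

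This is exactly why the paper argues by contrapositive instead: if $G$ is not virtually prosolvable, one can choose a descending chain of open normal subgroups $G \geq N_1 > M_1 \geq N_2 > M_2 \geq \ldots$ with each $N_i/M_i$ a nonabelian minimal normal subgroup of $G/M_i$, and then use \cref{oddele} to build, coordinate by coordinate in the inverse limit, a single element $g$ of odd order (as a supernatural number) that fails to centralize arbitrarily many nonabelian chief factors; the characterization $\mathcal S(G)=\cup_n\Sigma_n(G)$ then forces $\mu(\mathcal S_G(g))=0$, contradicting the hypothesis. In the profinite setting, chief factors of the finite quotients $G/M$ (which always exist) are the correct substitute for the minimal closed normal subgroups your diagonal-element construction requires; if you want to salvage your approach, your ``diagonal'' odd-order element must be assembled across such a chain of chief factors rather than inside a socle that may not exist.
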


Let $\mathcal{N}$ be the family of all finite nilpotent groups. Recall that the {(topological)} Fitting subgroup of a profinite group $G$ is the subgroup topologically generated by all the closed normal pronilpotent subgroups of $G$. It is denoted by $\fit(G)$ and it is a closed pronilpotent normal subgroup of $G$.

\begin{thm}\label{nilpp}
Let $G$ be a profinite group and let $g \in G$. If $\mu(\mathcal{N}_G(g)) > 0$, then there is an open subgroup $H$ of $G$ such that $g \in \fit(H)$. In particular, $g$ has finite order modulo $\fit(G)$.
\end{thm}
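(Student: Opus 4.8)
The plan is to reduce to finite quotients, isolate a self-contained finite-group lemma proved via Wielandt's subnormalizer, and glue the resulting bounded-index subgroups together by a compactness argument.

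First I would use the infimum formula from the introduction. Writing $\varepsilon=\mu(\mathcal N_G(g))>0$, for every open normal $N\unlhd G$ the finite quotient $\Gamma:=G/N$ satisfies $|\mathcal N_\Gamma(gN)|/|\Gamma|\ge\varepsilon$. Since $\mathcal N_\Gamma(gN)$ is symmetric and contains $1$, the subgroup $K_N:=\langle\mathcal N_\Gamma(gN)\rangle$ has $|K_N|\ge|\mathcal N_\Gamma(gN)|\ge\varepsilon|\Gamma|$, so $[\Gamma:K_N]\le 1/\varepsilon$. Moreover $\mathcal N_{K_N}(gN)=\mathcal N_\Gamma(gN)$, so inside $K_N$ the nilpotentizer of $gN$ already generates the whole group.

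The heart of the argument is the finite lemma: if $\Gamma$ is finite and $\langle\mathcal N_\Gamma(x)\rangle=\Gamma$, then $x\in\fit(\Gamma)$. Because $\fit(\Gamma)=\prod_p\op(\Gamma)$ and $\mathcal N_\Gamma(x)\subseteq\mathcal N_\Gamma(x_p)$ for each prime $p$ (as $x_p\in\langle x\rangle$, whence $\langle x_p,y\rangle\le\langle x,y\rangle$), it suffices to treat a $p$-element $x$ and prove $x\in\op(\Gamma)$, i.e. $\langle x\rangle\sn\Gamma$. Here I would invoke Wielandt's subnormalizer $S_\Gamma(\langle x\rangle)=\{h\in\Gamma:\langle x\rangle\sn\langle x,x^h\rangle\}$, which is a subgroup of $\Gamma$ and equals $\Gamma$ exactly when $\langle x\rangle$ is subnormal. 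The key observation is that every $y\in\mathcal N_\Gamma(x)$ lies in $S_\Gamma(\langle x\rangle)$: indeed $x^y\in\langle x,y\rangle$, so $\langle x,x^y\rangle\le\langle x,y\rangle$ is nilpotent and generated by the $p$-elements $x,x^y$, hence is a $p$-group, in which $\langle x\rangle$ is automatically subnormal. Therefore $\Gamma=\langle\mathcal N_\Gamma(x)\rangle\le S_\Gamma(\langle x\rangle)$, forcing $\langle x\rangle\sn\Gamma$ and $x\in\op(\Gamma)$.

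Applying the lemma to $K_N$ and $gN$ yields, for every $N$, an open subgroup $H_N\le G$ with $N\le H_N$, $[G:H_N]\le 1/\varepsilon$ and $gN\in\fit(H_N/N)$. To produce a single $H$ I would run an inverse-limit argument: for each $N$ the set $\mathcal H_N$ of subgroups $H$ with $N\le H\le G$, $[G:H]\le 1/\varepsilon$ and $gN\in\fit(H/N)$ is finite and nonempty, and the maps $H\mapsto HN'$ for $N\subseteq N'$ turn $\{\mathcal H_N\}$ into an inverse system, since surjections carry Fitting subgroups into Fitting subgroups. A compatible family gives a closed subgroup $H=\bigcap_N H_N$ of index $\le 1/\varepsilon$, hence open, and $\fit(H)=\varprojlim\fit(H/(H\cap N))$ contains $g$. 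For the final sentence, set $d=[G:H]$ and $D=\core_G(H)$, open normal in $G$ with $[G:D]$ dividing $d!$; then $g^{d!}\in D\cap\fit(H)$, a normal pronilpotent subgroup of $D$, so $g^{d!}\in\fit(D)$, and as $D\unlhd G$ the characteristic pronilpotent subgroup $\fit(D)$ lies in $\fit(G)$, giving $g^{d!}\in\fit(G)$.

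I expect the main obstacle to be the finite lemma — specifically the recognition that the nilpotentizer lands inside Wielandt's subnormalizer, which converts the generation hypothesis directly into subnormality; the remaining difficulty is bookkeeping in the inverse-limit step to guarantee that the Fitting condition is preserved by the bonding maps and survives passage to the limit.
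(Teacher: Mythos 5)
Your outer reduction (passing to finite quotients, and the inverse-limit gluing at the end) is sound, but the finite lemma on which everything rests is false, and so is the property of the ``subnormalizer'' you use to prove it. Wielandt's criterion states that $\langle x\rangle \sn \Gamma$ if and only if $\langle x\rangle \sn \langle x,x^h\rangle$ for \emph{every} $h\in\Gamma$, i.e.\ if and only if the set $S_\Gamma(\langle x\rangle)$ is \emph{equal} to $\Gamma$; this set is not a subgroup in general, so knowing that it generates $\Gamma$ gives nothing. In fact, for a $p$-element $x$ one has $\langle x\rangle \sn \langle x,x^h\rangle$ if and only if $\langle x,x^h\rangle$ is a $p$-group (if $\langle x\rangle$ is subnormal in $K=\langle x,x^h\rangle$, then $\langle x\rangle\leq \op(K)$, whence $K=\op(K)\langle x^h\rangle$ is a $p$-group), so your $S_\Gamma(\langle x\rangle)$ is precisely the set $\Lambda_{\Gamma,p}(x)$ of \cref{Lambda}, and \cref{nobaer} was proved exactly to exhibit finite groups in which this set has density at least $\eta$ for any $\eta<1$ (hence generates $\Gamma$, as any subset of density greater than $1/2$ does) while $\op(\Gamma)=1$ and $x\neq 1$. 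So the step ``$\Gamma=\langle\mathcal N_\Gamma(x)\rangle\leq S_\Gamma(\langle x\rangle)$ forces $\langle x\rangle\sn\Gamma$'' is irreparably broken.

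The finite lemma itself, stated for the nilpotentizer, is also false; a variant of the construction in the proof of \cref{nobaer} refutes it. Let $p$ divide $q-1$, let $A=\langle a\rangle\cong C_p$ act on $\mathbb{F}_q$ by multiplication by a scalar of order $p$, set $V=\mathbb{F}_q^{\,p}$, let $H=A\wr\langle\sigma\rangle$ with $\sigma$ a $p$-cycle, acting faithfully on $V$, and put $\Gamma=V\rtimes H$, $x=(a,1,\dots,1)\in A^p$. Since $H$ is a $p$-group, $\langle x,h\rangle\leq H$ is nilpotent for every $h\in H$, so $H\subseteq\nil_\Gamma(x)$; also $C_V(x)=\{(0,f_2,\dots,f_p)\}\subseteq\nil_\Gamma(x)$, and the $\sigma$-conjugates of $C_V(x)$ generate $V$. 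Hence $\langle\nil_\Gamma(x)\rangle\supseteq\langle C_V(x),H\rangle=\Gamma$, yet $\op(\Gamma)=1$ (because $H$ acts faithfully on $V$) and $\fit(\Gamma)=V$, so the $p$-element $x$ does not lie in $\fit(\Gamma)$. This is why the paper's proof never uses a generation statement: it converts $\mu(\nil_G(g))>0$ into a uniform bound on the number of chief factors that $g$ fails to centralize in the appropriate sense (\cref{nilabcf} and \cref{cencf}, together with \cite{fgg} for the nonabelian factors), and then applies the Doerk--Hawkes type criterion of \cref{thm_dh} inside the open subgroup $M\langle g\rangle$.
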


It follows from \cite[Theorem 1.3 (3)]{aemp} that, if $\mathcal N(G) = G$, then $G$ is virtually pronilpotent, thus its Fitting subgroup is open. The following result is a local version of this.

\begin{thm}\label{localnilp}
	Let $G$ be a  profinite group and let $P$ be a $p$-Sylow subgroup of $G.$  Suppose that $\mu(\nil_G(g))>0$ for every $g\in P.$
	Then the following hold:
	\begin{enumerate}
		\item $N_G(P)$ is an open subgroup of $G.$
		\item $P/O_p(G)$ is finite.
		\item $O_p(G)C_G(g)$ is open in $G$ for every $g \in P$.
		\item If $P$ is finite, then $C_G(P)$ is open in $G$.
	\end{enumerate}
\end{thm}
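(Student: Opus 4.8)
The plan is to derive all four statements from \cref{nilpp} by reducing to finite quotients through the infimum formula of the introduction, the real content being two finite-group estimates. Throughout I write $\overline{\,\cdot\,}$ for images modulo $\op(G)$, so $\overline G=G/\op(G)$ and $\overline P=P\op(G)/\op(G)$. First I unpack \cref{nilpp}: for each $g\in P$ there is an open subgroup $H_g\le G$ with $g\in\fit(H_g)$; since $g$ is a pro-$p$ element and the pro-$p$ elements of the pronilpotent group $\fit(H_g)$ form exactly its $p$-Sylow subgroup, which is $\op(H_g)$, in fact $g\in\op(H_g)$ and $\langle g\rangle^{H_g}$ is pro-$p$. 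Moreover $g$ has finite order modulo $\fit(G)$, and being pro-$p$ a suitable power of $g$ lands in the $p$-part $\op(G)$ of $\fit(G)$, so $\overline g$ has finite order. Finally the hypothesis passes to $\overline G$: a quotient of a pronilpotent group is pronilpotent, so $\nil_G(g)$ is contained in the preimage of $\nil_{\overline G}(\overline g)$ and hence $\mu_{\overline G}(\nil_{\overline G}(\overline g))\ge\mu_G(\nil_G(g))>0$ for every $g\in P$, while $\op(\overline G)=1$.

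\emph{Statement (2).} After this reduction it suffices to show that $\op(G)=1$ forces $P$ finite. Each $g\in P$ now has finite order, which is far from enough on its own, so the measure must be used uniformly. For this I invoke Baire category: the sets $P\cap\nil_{1/n}(G)$ are closed (the level sets $\mathcal F_\varepsilon$ are closed, as recalled in the introduction) and cover the compact space $P$, so one of them has nonempty interior. Thus there exist $\varepsilon>0$, an open normal subgroup $M\unlhd_\circ G$ and $g_0\in P$ such that $\mu(\nil_G(h))\ge\varepsilon$ for every $h$ in the coset $g_0(P\cap M)$, a subset of $P$ of index at most $d:=|P:P\cap M|$. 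By the infimum formula this yields, in every finite quotient $G/N$, a subset of $PN/N$ of relative size $\ge 1/d$ all of whose elements $\overline h$ satisfy $|\nil_{G/N}(\overline h)|\ge\varepsilon|G/N|$. The crux is the finite statement that there is $C=C(\varepsilon,d,p)$ such that, for any finite group $K$ with $p$-Sylow subgroup $Q$ containing a subset of size $\ge|Q|/d$ of elements $a$ with $|\nil_K(a)|\ge\varepsilon|K|$, one has $|Q/\op(K)|\le C$. Granting this, $|PN/N\cdot\op(G/N)|\le C$ for all $N$, and the inverse limit gives $|P|=|P/\op(G)|\le C$.

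\emph{Statement (3).} Fix $g\in P$. By (2) the $p$-Sylow subgroup of every finite quotient has order at most $s:=|\overline P|$. If $x\in\nil_G(g)$ then, in the pronilpotent group $\langle g,x\rangle$, the factor of order prime to $p$ commutes with the $p$-element $g$, so the $p'$-part of $x$ lies in $C_G(g)$; the same holds in all finite quotients. The second estimate I need is that, for a $p$-element $a$ of a finite group $K$ whose $p$-Sylow subgroup has order at most $s$, the bound $|\nil_K(a)|\ge\varepsilon|K|$ forces $|C_K(a)|\ge\delta|K|$ for some $\delta=\delta(\varepsilon,s,p)>0$; here the point is that $x\in\nil_K(a)$ confines the $p'$-part of $x$ to $C_K(a)$ and the $p$-part to a $p$-subgroup containing $a$, whose size is bounded by $s$. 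Applying this to the images of $g$ and letting $N$ vary, the infimum formula for the class $\mathcal A$ of abelian groups (for which $\mathcal A_{\overline G}(\overline g)=C_{\overline G}(\overline g)$) gives $\mu_{\overline G}(C_{\overline G}(\overline g))\ge\delta>0$, so $C_{\overline G}(\overline g)$ is open; equivalently $\op(G)C_G(g)$ is open in $G$.

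\emph{Statements (1) and (4), and the obstacle.} For (1), since $\overline P$ is finite, $C_{\overline G}(\overline P)=\bigcap_{\overline g\in\overline P}C_{\overline G}(\overline g)$ is a finite intersection of open subgroups by (3), hence open; as it is contained in $N_{\overline G}(\overline P)$ and $N_G(P)$ is the full preimage of $N_{\overline G}(\overline P)$ (because $\op(G)\le P$), $N_G(P)$ is open. For (4), if $P$ is finite then $\op(G)\le P$ is finite, so (3) together with $|\op(G)|<\infty$ shows each $C_G(g)$ with $g\in P$ is open, and $C_G(P)=\bigcap_{g\in P}C_G(g)$ is again a finite intersection, hence open. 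The main obstacle is the pair of finite estimates, and above all the one in (2): one must bound the whole Sylow subgroup modulo $\op(K)$ starting only from a single coset of bounded index rather than from a subgroup. I expect to prove it by showing that an element with a large nilpotentizer lies, up to bounded index, in $\op$ of a subgroup of bounded index (a finite avatar of \cref{nilpp}), and then patching these subgroups together across the coset; the counting in (3), which amounts to bounding the fibres of the $p'$-part map by the fixed Sylow order $s$, is similar but more routine.
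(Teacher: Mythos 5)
Your reduction modulo $O_p(G)$, the Baire step, and the deductions (1)$\Leftarrow$(2)+(3) and (4)$\Leftarrow$(3) are all sound, but the proposal has two genuine gaps, and the first one is fatal as written. The ``crux'' finite statement behind (2) --- a bound $C(\varepsilon,d,p)$ on $|Q:O_p(K)|$ whenever a coset of density $1/d$ in $Q$ consists of elements whose nilpotentizer has density $\geq\varepsilon$ --- is never proved, and it is not a routine consequence of the machinery you invoke. The natural route (each such element fails to centralize at most $B(\varepsilon)$ abelian $p'$- or nonabelian chief factors of $K$, hence by averaging the subgroup $U$ under your coset centralizes all but boundedly many of them) cannot suffice: by \cref{thm_dh}, membership in $O_p(K)$ requires centralizing \emph{all} such factors, and $K$ may have very few of them. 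For instance, if $Q$ acts faithfully and irreducibly on an elementary abelian $q$-group $V$ and $K=V\rtimes Q$, there is a single relevant $K$-chief factor, so ``all but boundedly many'' is vacuous while $|Q:O_p(K)|=|Q|$ is unbounded; what excludes such $K$ is information about chief factors of \emph{subgroups} of $K$, which is exactly where your unproved ``patching'' lives. Note also that your finite statement cannot be recovered from the profinite theorem by a compactness or product argument (nilpotentizer densities multiply, so a fixed $\varepsilon<1$ occurring infinitely often gives measure $0$), so it genuinely requires a new, uniform proof. The paper sidesteps this entirely: in the countably based case (\cref{nilpel}) Baire is applied not to the level sets $\mathcal N_{1/n}(G)$ --- which are closed but are \emph{not subgroups} --- but to a countable family of closed subgroups $C_\lambda$, intersections of centralizers of chief factors in a fixed chief filtration; because $C_\lambda$ is a subgroup, the interior coset upgrades for free to an open subgroup $M\leq C_\lambda$, which (via the sets $\Delta_{G,p}(N)$ for $N$ deep in the filtration, a ``deep tail'' with no finite-group analogue) is precisely what \cref{thm_dh} needs. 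The general case is then reduced to the countably based one by \cref{intersecochiusi}. Your choice of cover blocks exactly this coset-to-subgroup upgrade.

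The finite estimate in (3) is also flawed as stated, by a counting error: you confine the $p$-part $x_1$ of $x\in\nil_K(a)$ to ``a $p$-subgroup containing $a$, whose size is bounded by $s$,'' but different $x$ place $x_1$ in \emph{different} $p$-subgroups containing $a$, so what your argument yields is $|\nil_K(a)|\leq n\cdot s\cdot|C_K(a)|$, where $n$ is the number of Sylow $p$-subgroups of $K$ containing $a$ --- and $n$ is not bounded by $\varepsilon$, $s$, $p$ alone. Controlling $n$ in the finite quotients of $\overline G$ amounts to knowing that $N_{\overline G}(\overline P)$ is open, i.e.\ item (1); but you derive (1) \emph{from} (3), so this is circular. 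The paper avoids the circle by proving (1) and (2) first: then the set $\Omega_p(G)$ of $p$-elements is a finite union $\bigcup_{t\in T}tO_p(G)$, whence $\nil_G(g)\subseteq\Omega_p(G)C_G(g)=\bigcup_{t\in T}tO_p(G)C_G(g)$ gives $\mu(O_p(G)C_G(g))\geq\mu(\nil_G(g))/|T|>0$ directly in the profinite group, with no finite-quotient estimate needed. To salvage your order of deduction you would have to prove the estimate in (3) with the number of Sylow subgroups unbounded, which I do not see how to do.
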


By \cite[Theorem 1.3]{aemp},  if $\mathcal N(G)=G$, then $G$ is virtually pronilpotent. In \cite{aemp}, it is observed that the condition $\mathcal N(G) = G$ is stronger than $G$ being virtually pronilpotent; indeed, it suffices to consider the example $\mathbb Z_p \rtimes C_2 = \mathbb Z_p \rtimes \la x \ra$, where $p$ is an odd prime and $x$ acts on the $p$-adic integers by inversion, and observe that $\mathcal N_G(x) = \{1,x\}$. It is natural to ask whether, as happens for the class $\mathcal S$ of finite solvable groups, we can reach the conclusion that $G$ is virtually pronilpotent from the  weaker assumption $\mu(\mathcal{N}(G)) > 0.$ The lack of a good characterization of the elements of $\mathcal N(G)$ in terms of the group structure, does not allow us to mimic the argument used in the proof of \cref{thodd} for the  class $\mathcal S$. Therefore, this is an apparently difficult question that we leave open. However, we obtain a partial result, providing an affirmative answer to the analogous question formulated for the class $\mathcal{P}_p(G)$ of finite $p$-groups, for any fixed prime $p.$

\begin{thm}\label{classp}
Let $G$ be a profinite group. If $\mu(\mathcal{P}_p(G)) > 0$ then $G$ is virtually pro-$p$.
\end{thm}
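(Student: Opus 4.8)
The plan is to reduce the hypothesis to a positive lower bound on the measure of the set of pairs that \emph{generate} a pro-$p$ group, and then to run an elementary counting argument on finite quotients that controls the index of a Sylow pro-$p$ subgroup uniformly.

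First I would extract a uniform constant. Since $\mathcal P_p(G)=\bigcup_{n}\mathcal P_{p,1/n}(G)$ is an increasing union of closed sets, continuity of the measure yields an $\varepsilon=1/n>0$ with $\delta:=\mu(\mathcal P_{p,\varepsilon}(G))>0$; thus every $g$ in the positive-measure set $A:=\mathcal P_{p,\varepsilon}(G)$ satisfies $\mu(\mathcal P_{p,G}(g))\ge\varepsilon$. The set $D=\{(g,x)\in G\times G\mid \langle g,x\rangle\text{ is pro-}p\}$ is closed, and by Fubini $(\mu\times\mu)(D)=\int_G\mu(\mathcal P_{p,G}(g))\,d\mu(g)\ge \varepsilon\,\mu(A)=\varepsilon\delta=:c>0$. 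Hence it suffices to prove: if $(\mu\times\mu)(D)\ge c>0$, then $G$ has a Sylow pro-$p$ subgroup of index at most $1/c$, and in particular $G$ is virtually pro-$p$.

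The core is a counting estimate in each finite quotient. Fix an open normal $N\unlhd G$, write $Q=G/N$ and let $\phi\colon G\to Q$ be the projection. If $(g,x)\in D$ then $\langle gN,xN\rangle$ is a $p$-group, so $D\subseteq(\phi\times\phi)^{-1}(D_Q)$, where $D_Q=\{(a,b)\in Q\times Q\mid \langle a,b\rangle\text{ is a }p\text{-group}\}$; this forces $|D_Q|/|Q|^2\ge c$. Now every pair in $D_Q$ generates a $p$-subgroup, hence lies in $P\times P$ for some $P\in\syl_p(Q)$, so $|D_Q|\le n_p(Q)\,|P|^2$ with $n_p(Q)=|\syl_p(Q)|$. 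Writing $m=|Q:P|$ and using that $n_p(Q)=|Q:\nor_Q(P)|$ divides $m$, we get $c\,|P|^2m^2\le |D_Q|\le n_p(Q)\,|P|^2\le m\,|P|^2$, whence $m\le 1/c$. Thus in \emph{every} finite quotient the index of a Sylow $p$-subgroup is at most $1/c$.

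Finally I would pass to the limit. The bound $|G/N:P_N|\le 1/c$ holds for all $N$, and when $N'\le N$ the group $G/N$ is a quotient of $G/N'$, so the $p'$-part of $|G/N|$ divides that of $|G/N'|$; the $p'$-parts therefore form a directed system under divisibility, bounded by $1/c$, and so stabilize to an integer $\le 1/c$, which is exactly the $p'$-part of the supernatural order $|G|$. Consequently a Sylow pro-$p$ subgroup of $G$ has finite index at most $1/c$ and is open, giving virtual pro-$p$-ness. The argument itself is short; the only points requiring care are the measure-theoretic reduction (closedness of $D$ and the use of Fubini, both furnished by the setup recalled in the introduction) and the supernatural-number bookkeeping in this last step, which is where I expect the main, though routine, obstacle to lie. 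The counting inequality $n_p(Q)\le|Q:P|$ does all the real work.
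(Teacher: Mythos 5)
Your proof is correct, and it takes a genuinely different route from the one in the paper. The paper first deduces that $G$ is virtually prosolvable from the inclusion $\mathcal P_p(G)\subseteq \mathcal S(G)$ together with its solvable-case results, and then runs a chief-factor argument: it introduces the subgroup $C$ of elements centralizing almost all abelian $p'$-chief factors, invokes several results of \cite{pel} to show (after reducing to $C$ prosolvable with $O_p(C)=1$) that a $p$-Sylow subgroup of $C$ is finite with open normalizer, and derives a contradiction with positivity of the measure. Your argument bypasses all of that machinery. The passage from $\mu(\mathcal P_p(G))>0$ to $(\mu\times\mu)(D)\geq c>0$ via continuity from below and Fubini is sound (note $D$ is indeed closed, being the intersection over open normal $N$ of the clopen sets $\{(g,x)\mid \langle gN,xN\rangle \text{ is a $p$-group}\}$); the Sylow count $c\,|Q|^2\leq |D_Q|\leq |\syl_p(Q)|\,|P|^2\leq |Q:P|\,|P|^2$ correctly yields $|Q:P|\leq 1/c$ in every finite quotient $Q=G/N$; and the final passage to the limit is standard bookkeeping with supernatural orders together with the existence of Sylow pro-$p$ subgroups of profinite groups, a closed subgroup of finite index being open. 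What your approach buys is that it is elementary, self-contained and quantitative: it bounds the index of an open pro-$p$ subgroup explicitly by $1/c$, and it really only uses the hypothesis that a random pair generates a pro-$p$ group with positive probability (which, by the same Fubini argument, is equivalent to $\mu(\mathcal P_p(G))>0$). What the paper's approach buys is integration with its general framework: the chief-factor-centralizer analysis it relies on is exactly what extends to the nilpotent and solvable classes treated elsewhere in the paper, where nothing like your crucial counting inequality $|\syl_p(Q)|\leq |Q:P|$ is available, since nilpotent or solvable subgroups of a finite group need not be covered by boundedly many conjugates of a single subgroup.
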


%It is interesting to ask whether $\mu(\mathcal{N}(G)) > 0$ implies that $G$ is virtually pronilpotent. This is a hard question because we do not have a group-theoretical characterization of $\mathcal{N}(G)$. This contrasts with the case of $\mathcal S(G)$; the characterization of elements in $\mathcal S(G)$ conjectured in \cite[Conjecture 1]{solv} and proved in \cite[Theorem 6]{solv} and \cite[Corollary 1.3]{fgg} is fundamental in the proof of \cref{descs}.

If $G$ is a finite group, then the intersection $\cap_{g\in G}\nil_G(g)$ of the nilpotentizers coincides with the hypercenter $Z_\infty(G)$ of $G$ (see \cite[Proposition 2.1]{az}). More generally, when $G$ is a profinite group, $\cap_{g\in G}\nil_G(g)$ is the hypercenter $Z_\infty(G)$, defined as the set of all $x \in G$ such that $xN \in Z_\infty(G/N)$ for every open normal subgroup $N$ of $G$ (see \cite{hyper}). In other words, if $g\in G,$
then $\nil_G(g)=G$ if and only if $g\in Z_\infty(G).$
So it is natural to ask whether \cref{nilpp} can be replaced with a stronger statement: {\sl{Suppose that $g\in G$ is such that $\mu(\nil_G(g))>0.$ Does there exist an open subgroup $H$ of $G$ such that $g\in Z_\infty(G)$?}}
%\textcolor{red}{The same argument used in the proof of \cref{nilpp} demonstrates that an affirmative answer to the previous question would imply that $g$ has finite order modulo $Z_\infty(G).$} \textcolor{blue}{Lo stesso argomento non funziona perché non è vero che, se $M\unlhd_o G$, allora $Z_\infty(M) \leq Z_\infty(G)$ (ad esempio $G=S_3$ e $M=A_3$).} The following result shows that this is not always the case.

\begin{thm}\label{nofiniteorder}
For every positive real number $\varepsilon$ there exists a solvable profinite group $G$ which contains an element $g$ such that {$\mu(\mathcal{N}_G(g)) > 1-\varepsilon$, $g$ has infinite order modulo $Z_{\infty}(G)$ and} there is no open subgroup $H$ of $G$ with $g \in Z_{\infty}(H)$.
\end{thm}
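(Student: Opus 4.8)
The plan is to produce $G$ as an infinite direct product $G=\prod_{j\ge 1}G_j$ of solvable blocks of bounded derived length, with $g=(g_j)_j$, engineered so that in each block $g_j$ is non‑hypercentral yet has an overwhelmingly large nilpotentizer. The essential device in a block is to take $T_j=C_{r_j}\rtimes C_{2^{k_j}}$ a Frobenius group whose kernel is cyclic of odd prime order $r_j$ and whose complement is a large cyclic $2$‑group, and to set $G_j=V_j\rtimes T_j$ where $V_j$ is a free $\mathbb{Z}_2$‑module of finite rank on which $T_j$ acts so that $C_{r_j}$ has no nonzero fixed points; concretely one may take $V_j=\mathrm{Ind}_{C_{r_j}}^{T_j}\big(\mathbb{Z}_2[\zeta_{r_j}]\big)$, induced from the unramified $C_{r_j}$‑module $\mathbb{Z}_2[\zeta_{r_j}]$, and let $g_j\in V_j$ have infinite order. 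The guiding principle is that the $2$‑group complement dilutes the ``bad'' odd elements: because $T_j$ is Frobenius with $2$‑group complement, the elements of $T_j$ of order divisible by $r_j$ are exactly the nonidentity elements of the kernel $C_{r_j}$, of measure $(r_j-1)/(r_j2^{k_j})<2^{-k_j}$ in $T_j$.

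The first step is the per‑block estimate $\mu(\nil_{G_j}(g_j))\ge 1-2^{-k_j}$. For this I would note that if $x=(v,t)\in G_j$ has $2$‑element second coordinate $t$, then $\langle g_j,x\rangle\le V_j\rtimes\langle t\rangle$, which is pro‑$2$ and hence pronilpotent; consequently the complement of $\nil_{G_j}(g_j)$ is contained in $V_j\times\{\,t\in T_j: r_j\mid\mathrm{ord}(t)\,\}$, a set of measure $<2^{-k_j}$. At the same time $g_j\notin Z_{\infty}(G_j)$, since any $c\in C_{r_j}\setminus\{1\}$ moves $g_j$, so in $\langle g_j,c\rangle$ the $r_j$‑element $c$ does not centralize the pro‑$2$ part containing $g_j$ and $\langle g_j,c\rangle$ is not pronilpotent. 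As $C_{r_j}$ acts without nonzero fixed points, one gets more, namely $Z_{\infty}(G_j)\cap V_j=0$, which I will use below.

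Next I would fix the parameters and assemble the product. Using Dirichlet's theorem, choose distinct odd primes $r_j\equiv 1\pmod{2^{k_j}}$ with $\sum_j 2^{-k_j}<\varepsilon$, so each Frobenius group $T_j$ exists; each $G_j$ has derived length at most $3$, so $G$ is prosolvable. Since the class of pronilpotent groups is closed under subgroups and cartesian products, $\langle g,x\rangle$ is pronilpotent if and only if every projection $\langle g_j,x_j\rangle$ is, so $\nil_G(g)=\prod_j\nil_{G_j}(g_j)$ and therefore $\mu(\nil_G(g))=\prod_j\mu(\nil_{G_j}(g_j))\ge\prod_j(1-2^{-k_j})\ge 1-\sum_j2^{-k_j}>1-\varepsilon$. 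For the order of $g$ modulo the hypercenter I would use the block $j=1$: the surjection $G\to G_1\to G_1/Z_{\infty}(G_1)$ combined with $Z_{\infty}(G_1)\cap V_1=0$ embeds $V_1\cong\mathbb{Z}_2^{\,d_1}$ into $G_1/Z_{\infty}(G_1)$, so the image of $g$ has the same infinite order as $g_1$; hence $g$ has infinite order modulo $Z_{\infty}(G)$.

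The hard part is the final assertion, that no open $H$ with $g\in H$ has $g\in Z_{\infty}(H)$, and this is precisely where the infinite product is forced upon us. The difficulty is that $g$ lies in $\fit(G)=\prod_j V_j$, a pro‑$2$ group, which as a standalone profinite group is its own hypercenter; so if $\fit(G)$ were open it would be an $H$ with $g\in Z_{\infty}(H)$, exactly the failure one sees for a single finite‑complement block. The construction evades this because in the infinite product $\fit(G)$ has infinite index and is \emph{not} open. Concretely, any open $H$ contains a tail $\prod_{j>N}G_j$, so for every $i>N$ the projection $\pi_i\colon H\to G_i$ is surjective; since hypercentral elements have hypercentral images under continuous surjections and $\pi_i(g)=g_i\notin Z_{\infty}(G_i)$, it follows that $g\notin Z_{\infty}(H)$. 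The main obstacle to watch is thus keeping these three demands compatible — large measure (which pushes $g$ toward being central), genuine non‑hypercentrality witnessed in \emph{every} open subgroup, and infinite order modulo $Z_{\infty}(G)$ — and the Frobenius dilution together with the non‑openness of $\fit(G)$ is exactly what reconciles them.
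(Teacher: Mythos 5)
Your proposal is correct, and it follows the same overall template as the paper's proof: an infinite direct product of blocks of the form (normal $p$-subgroup) $\rtimes$ (group consisting mostly of $p$-elements), with $g$ placed in the product of the normal $p$-subgroups, the nilpotentizer bound coming from the fact that (normal pro-$p$ subgroup)$\rtimes\langle p\text{-element}\rangle$ is pro-$p$, non-hypercentrality coming from a fixed-point-free coprime action on that subgroup, and the tail of the product defeating every open subgroup. The genuine differences are in the blocks and in the endgame. The paper keeps every block \emph{finite}: $X_t=W_t\rtimes Y_t$ with $W_t$ the sum-zero submodule of $C_{p^t}^{q^{n_t}}$ and $Y_t$ a subgroup of $H\wr C_{p^t}$ built over a Frobenius group $H$; there $g_t$ has order $p^t$, infinite order emerges only in the product, and the per-block hypercenter computation is elementary finite group theory ($Z(X_t)=1$, hence $Z_\infty(X_t)=1$). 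You instead take \emph{infinite} blocks $G_j=V_j\rtimes T_j$ with $V_j$ a torsion-free $\mathbb{Z}_2$-lattice and $T_j$ Frobenius with cyclic $2$-group complement; this avoids the wreath-product construction, makes each $g_j$ already of infinite order, and gives a very clean dilution estimate (the non-$2$-elements of $T_j$ are exactly $C_{r_j}\setminus\{1\}$, of measure less than $2^{-k_j}$), at the price of arguing inside profinite blocks: you need that coprime-order elements of a pronilpotent group commute, and the profinite characterization $Z_\infty(G_j)=\bigcap_x \nil_{G_j}(x)$ (which the paper states, citing Schmid) to get $Z_\infty(G_j)\cap V_j=0$, where the paper simply computes a trivial center. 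Your final step also differs: you use that hypercentral elements have hypercentral images under the surjective projections $H\to G_i$ onto tail factors (immediate from the definition of the profinite hypercenter), whereas the paper intersects with the normal tail subgroup $N\leq H$ and uses $\langle g\rangle\cap N\leq Z_\infty(H)\cap N\leq Z_\infty(N)=1$ against the infinitude of the order of $g$. Both mechanisms are valid, and yours arguably needs less justification.

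Two small corrections you should make. First, the theorem asks for a \emph{solvable} $G$; your construction delivers this, since every block has derived length at most $3$ and hence $G^{(3)}\leq \prod_j G_j^{(3)}=1$, but you should state the conclusion as ``solvable,'' not merely ``prosolvable.'' Second, in justifying $\nil_G(g)=\prod_j\nil_{G_j}(g_j)$, the forward inclusion uses closure of pronilpotency under \emph{continuous images} (the projection of the closed subgroup $\langle g,x\rangle$ is $\langle g_j,x_j\rangle$), not closure under subgroups and cartesian products, which is what the reverse inclusion uses.
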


Let $G$ be a profinite group and let $x\in G.$ Consider the set $$\Lambda_G(x)=\{g\in G \mid \langle x, x^g\rangle \text { is pronilpotent}\}.$$ Clearly $\nil_G(x)\subseteq \Lambda_G(x)$, so it is reasonable to ask to what extent the results proven in  Section \ref{nilpot} remain true when substituting $\nil_G(x)$ with $\Lambda_G(x).$ By the Baer-Suzuki  Theorem (cf. \cite[p. 105]{gor}), if $\Lambda_G(x)=G,$ then $x \in \fit(G).$ A natural question is whether the following generalized version of \cref{nilpp} is true: {\sl{if $\mu(\Lambda_G(x))>0,$ then the order of $x$ modulo $\fit(G)$ is finite}.} 
We can state also a $p$-local version of the previous question: {\sl{Assume that $x$ is a $p$-element of $G$ and let $\Lambda_{G,p}(x)=\{g\in G \mid \langle x, x^g\rangle \text { is a pro-$p$ group}\}.$ Does $\mu(\Lambda_{G,p}(x))>0$ imply that the order of $x$ modulo $O_p(G)$ is finite?}}
We prove that the last question has in general a negative answer. This is implied by the following result.

\begin{thm}\label{nobaer}
For every positive real number $\eta<1$, every integer $t \geq 1$
and every prime $p,$ there exists a finite group $G$ such that $O_p(G)=1$ and
$|\Lambda_{G,p}(x)|\geq \eta|G|$ for some element $x \in G$ of order $p^t$.
\end{thm}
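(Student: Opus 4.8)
The plan is to realize $G$ as a symmetric group $S_n$ with $n$ large (depending on $\eta$, $t$ and $p$) and to take $x$ to be a single cycle of length $p^t$. First I would record the two easy structural facts: $x$ has order exactly $p^t$, and $O_p(S_n)=1$ as soon as $n\ge 5$, since then the only nontrivial proper normal subgroup of $S_n$ is $A_n$, which is not a $p$-group. So the constraints $O_p(G)=1$ and $x$ of order $p^t$ are met for any such $n$, and everything reduces to forcing the density of $\Lambda_{G,p}(x)$ above $\eta$.

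The key observation is that conjugates of $x$ whose support is disjoint from that of $x$ already lie in $\Lambda_{G,p}(x)$. Writing $A=\supp(x)$, a set of size $k:=p^t$, the conjugate $x^g$ is again a $k$-cycle, now with support $g^{-1}(A)$. If $g^{-1}(A)\cap A=\emptyset$, then $x$ and $x^g$ are permutations with disjoint supports, hence commute, so $\langle x,x^g\rangle=\langle x\rangle\times\langle x^g\rangle\cong C_{p^t}\times C_{p^t}$ is a $p$-group. Thus every such $g$ belongs to $\Lambda_{G,p}(x)$, which gives the lower bound I want.

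It then remains to count. As $g$ runs over $S_n$ the set $g^{-1}(A)$ is equidistributed over the $k$-subsets of $\{1,\dots,n\}$, so the proportion of $g$ with $g^{-1}(A)\cap A=\emptyset$ equals $\binom{n-k}{k}/\binom{n}{k}=\prod_{i=0}^{k-1}\frac{n-k-i}{n-i}$. For $k$ fixed each factor tends to $1$, hence so does the product, as $n\to\infty$. Given $\eta<1$, I would therefore choose $n\ge 5$ large enough that this proportion is at least $\eta$; then $|\Lambda_{G,p}(x)|\ge \eta\,|S_n|$, completing the construction.

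There is essentially no obstacle beyond hitting on the right example: the only things to verify are the elementary commutation of disjoint cycles and the stated limit of the binomial ratio, both of which are routine. It is worth noting that this is perfectly consistent with the Baer--Suzuki theorem, since $O_p(G)=1$ and $x\neq 1$ force $\Lambda_{G,p}(x)\neq G$; the point of the theorem is precisely that, although the set can never be everything, its density can be pushed arbitrarily close to $1$.
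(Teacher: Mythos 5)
Your proof is correct, and it reaches the theorem by a genuinely different and more elementary route than the paper. The paper's construction is a solvable affine group: it fixes a prime $q$ with $p^t \mid q-1$, forms $H = A \wr \langle \sigma \rangle$ with $A \cong C_{p^t}$ acting on $\mathbb{F}_q$ by multiplication and $\sigma$ an $n$-cycle, and takes $G = V \rtimes H \leq \mathrm{AGL}(n,q)$ with $V = \mathbb{F}_q^n$ and $x = (a,1,\dots,1) \in A^n$; membership of the cosets $VA^n\sigma^i$ ($1 \leq i \leq n-1$) in $\Lambda_{G,p}(x)$ is then deduced from Lemma~\ref{sumcent} --- an exact description $\Lambda_{G,p}(x) \cap yN = yC_N(x^y)C_N(x)$ for $N$ a normal $p'$-subgroup, proved via Sylow conjugacy in a coprime extension --- combined with the computation $C_V(x) + C_V(x^{\sigma^i}) = V$, giving density at least $(n-1)/n \geq \eta$. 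You instead take $G = S_n$ and $x$ a $p^t$-cycle and produce commuting conjugates directly: if $g^{-1}(\supp(x))$ is disjoint from $\supp(x)$, then $x$ and $x^g$ commute and $\langle x, x^g\rangle \cong C_{p^t} \times C_{p^t}$ is a $p$-group; your counting (equidistribution of $g^{-1}(A)$ over $k$-subsets and $\binom{n-k}{k}/\binom{n}{k} = \prod_{i=0}^{k-1}\frac{n-k-i}{n-i} \to 1$ for fixed $k=p^t$) and the verification $O_p(S_n)=1$ for $n \geq 5$ are both right. Interestingly, the underlying mechanism is the same in both proofs: in the paper's example, too, the relevant subgroups $\langle x, x^g\rangle$ are forced to be abelian, since the commutator $[x,x^g]$ there lies in the $q$-group $V$ and so must vanish in a $p$-group; the difference is that you detect the commuting conjugates by disjointness of supports, while the paper detects them through centralizer sums and the coprime-action lemma. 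What your approach buys is brevity and self-containedness --- no auxiliary lemma, no field arithmetic. What the paper's approach buys is that the witnessing groups are solvable, so the measure-theoretic failure of the Baer--Suzuki conclusion occurs already within the class of solvable groups, and that Lemma~\ref{sumcent} yields an exact, not merely lower-bound, description of $\Lambda_{G,p}(x)$ coset by coset. Either family of examples plugs equally well into the product construction $G = \prod_t G_t$ sketched after the theorem, and your closing remark that Baer--Suzuki forces $\Lambda_{G,p}(x) \neq G$ whenever $O_p(G)=1$ and $x \neq 1$ is also correct.
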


Indeed, for every positive integer $t$, choose a finite group $G_t$ with $O_p(G_t)=1$ and containing an element $x_t \in G_t$ of order $p^t$ such that $|\Lambda_{G_t,p}(x_t)| \geq \eta_t |G_t|$ where $\eta_t = 1-1/2^t$. Let $G =\prod_{t \geq 1} G_t$ and $x=(x_t)_t \in G$. It follows that
\[\mu(\Lambda_{G,p}(x)) \geq \prod_{t \geq 1} \eta_t > 0.\]
Moreover $O_p(G)=1$ and $x$ has infinite order.

In \cref{families} we prove \cref{allfingrps}. In \cref{solv} we prove Theorem \ref{descs}, \ref{musolpos} and \ref{thodd}. In \cref{nilpot} we prove Theorems  \ref{nilpp}, \ref{localnilp}, \ref{classp} and \ref{nofiniteorder}. In \cref{Lambda} we prove \cref{nobaer}.

\section{General families of finite groups} \label{families}

In this section we discuss \cref{q2}. In particular we prove Theorem \ref{allfingrps} and, in \cref{exp2}, we give an example to show that the statement of this theorem does not hold if we remove the assumption.

\begin{proof}[Proof of \cref{allfingrps}]

If $\mathcal F$ is the class of all finite groups, then $\mathcal{F}(G)=G$ for every profinite group $G$ and this proves the implication $(\Leftarrow)$.

Now we prove the other implication. Assume that $\mathcal F$ is not the class of all finite groups. We will construct a profinite group $G$ such that $\mathcal{F}(G)$ is not closed.

Assume first that $\mathcal F$ contains all the finite cyclic groups. Since $\mathcal F$ is not the class of all finite groups and every finite group is isomorphic to a subgroup of an alternating group, there exists an alternating group $\alt(m)$ which does not belong to $\mathcal F$. Since $\mathcal{F}$ contains all finite cyclic groups, $\mu(\mathcal{F}_{\alt(m)}(1))=1$. It is known that, for every $x \in \alt(m) \setminus \{1\}$, there exists $y \in \alt(m)$ such that $\langle x,y \rangle = \alt(m)$ \cite{gk}, therefore $0< \mu(\mathcal{F}_{\alt(m)}(x)) < 1$ for all $x \in \alt(m) \setminus \{1\}$. Let $G := \alt(m)^{\mathbb{N}}$ and let $c := \max_{x \in \alt(m)} \mu(\mathcal{F}_{\alt(m)}(x))$. Then $0 < c < 1$. If $g = (g_n)_n \in G$ then
$$\mu(\mathcal{F}_G(g)) = \prod_{n \in \mathbb{N}} \mu(\mathcal{F}_{\alt(m)}(g_n)) = \prod_{g_n \neq 1} \mu(\mathcal{F}_{\alt(m)}(g_n)) \leq \prod_{g_n \neq 1} c.$$
It follows that $\mu(\mathcal{F}_G(g)) > 0$ if and only if $g_n$ is almost always $1$, so $\mathcal{F}(G)$ equals the restricted direct power $\alt(m)^{(\mathbb{N})}$, and hence it is not closed in $G$ (it is dense).

Now assume that $\mathcal F$ does not contain all the finite cyclic groups. By our assumption on $\mathcal F$, there exist two prime numbers $p,q$ such that $C_p$ is in $\mathcal F$ and $C_q$ is not in $\mathcal F$. Let $n$ be the order of $q$ in the multiplicative group of $\mathbb{Z}/p\mathbb{Z}$, so that $p$ divides $q^n-1$. Let $Q$ be the additive group of the field with $q^n$ elements. The group $P \cong C_p$ generated by an element of multiplicative order $p$ in the field with $q^n$ elements acts on $Q$ fixed point freely (by multiplication), so $H=Q \rtimes P$ is a Frobenius group. Fix a positive integer $t$ and let $n_t=p^t$, $\sigma=(1,\ldots,n_t) \in S_{n_t}$, $h \in H \setminus Q$, $Y_t= Q^{n_t} \langle g \rangle < H \wr \langle \sigma \rangle$ where $g=(h,1,\ldots,1)\sigma$. Note that $|Y_t| = q^{p^t n} p^{t+1}$ and that a subgroup of $Y_t$ belongs to $\mathcal{F}$ if and only if it is a $p$-subgroup.
We want to prove that, if $u \in Y_t$, then $\mu(\mathcal F_{Y_t}(u)) \leq q^{-{n_tn}}.$ %being $\mu(\mathcal{F}_{Y_t}(u))$
%the probability that a randomly chosen $w \in Y_t$ is such that $\langle u, w\rangle \in \mathcal{F}$.
Notice that $\langle g\rangle $ is a $p$-Sylow subgroup of $Y_t$ and that $g^{n_t}=(h,\dots,h)$ generates the unique subgroup of order $p$ of $\langle g \rangle.$ Moreover
$(y_1,\dots,y_{n_t})\in Q^{n_t}$ normalizes $\langle g^{n_t}\rangle $ if and only if $y_i=1$ for every $1\leq i\leq n_t$ and therefore $\langle g\rangle$ is the unique $p$-Sylow subgroup of $Y_t$ containing $g^{n_t}.$ This implies that  $\langle g^{n_t}, x\rangle \in \mathcal{F}$ if and only if $x \in \langle g\rangle$ and therefore $$\mu(\mathcal{F}_{Y_t}(g^{n_t}))=\frac{|g|}{|Y_t|}=\frac{1}{q^{n_tn}}.$$
Let $u$ be an arbitrary nontrivial $p$-element of $Y_t$. 
Since $\mu(\mathcal{F}_{Y_t}(u))\leq \mu(\mathcal{F}_{Y_t}(u^m))$ for every $m\in \mathbb N,$ we may assume $|u|=p$. But then $u$ is conjugate to an element of $\langle g^{n_t} \rangle$, and therefore
$$\mu(\mathcal F_{Y_t}(u))\leq \frac{|g|}{|Y_t|}=\frac{1}{q^{n_tn}}.$$
Now consider $G=\prod_{t\in \mathbb N}Y_t$.
Assume $y=(y_t)_{t\in \mathbb N}\in G$ and let $\Lambda:=\{t\mid y_t\neq 1\}.$ Then
$$\mu(\mathcal F_{G}(y))\leq \prod_{t\in \Lambda}\frac{1}{q^{n_tn}},$$ so if $\mu(\mathcal{F}_{G}(y))>0$, then $|\Lambda|$ is finite (and clearly $y_t$ is a $p$-element for every $t\in \Lambda$). Conversely, suppose that there exists a finite $\Lambda \subset \mathbb{N}$ such that $y_t=1$ if $t\notin \Lambda,$ $y_t$ is a $p$-element of $Y_t$ otherwise. If $t\in \Lambda,$ then $\alpha_t:=\mu(\mathcal{F}_{Y_t}(y_t))>0$. On the other hand, since every element of $Y_t \setminus Q^{n_t}$ is a $p$-element, $$\mu(\mathcal{F}_{Y_t}(1)) \geq \frac{|Y_t|-|Q^{n_t}|}{|Y_t|} = 1-\frac{1}{p^{t+1}}.$$ Hence
$$\mu(\mathcal F_{G}(y))\geq \prod_{t\in \Lambda}\alpha_t
\prod_{t\notin \Lambda}\left(1-\frac{1}{p^{t+1}}\right)>0.$$
We have proved that  $\mathcal{F}(G)$ equals the {set of $p$-elements belonging to the restricted direct product of the groups $Y_t$}. Any $p$-element in $G$ belongs to the closure of $\mathcal{F}(G)$, so it is not closed. 
\end{proof}

On the other hand, we are able to find a family $\mathcal F$ for which $\mathcal F(G)$ is closed for \emph{every} profinite group $G$. {If $p$ is a prime number, let $\Omega_p(G)$ be the set of $p$-elements of $G$.}

\begin{prop} \label{exp2}
Let $\mathcal F$ be the class of all finite groups of exponent at most $2$. Then $\mathcal{F}(G)$ is closed in $G$ for every profinite group $G$.
\end{prop}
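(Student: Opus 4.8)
The plan is to begin by pinning down $\mathcal F_G(g)$ for this class. A finite group of exponent at most $2$ is elementary abelian of exponent dividing $2$, so pro-$\mathcal F$ groups are exactly the abelian profinite groups in which every element squares to the identity. Writing $\Omega:=\{x\in G : x^2=1\}$ (a closed set, being the preimage of $1$ under $x\mapsto x^2$), one gets that $\langle g,x\rangle$ is pro-$\mathcal F$ if and only if $g,x\in\Omega$ and $[g,x]=1$; hence $\mathcal F_G(g)=\emptyset$ unless $g^2=1$, in which case $\mathcal F_G(g)=\Omega\cap C_G(g)$. Consequently $\mathcal F(G)\subseteq\Omega$ and, setting $f(g):=\mu_G(\Omega\cap C_G(g))$, we have $\mathcal F(G)=\{g\in\Omega : f(g)>0\}$. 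Since each $\mathcal F_\varepsilon(G)$ is closed, it suffices to exhibit a constant $c_0>0$, depending only on $G$, with the property that $f(g)>0$ forces $f(g)\ge c_0$; then $\mathcal F(G)=\mathcal F_{c_0}(G)$ is closed. Producing this uniform lower bound (a ``gap'' for $f$) is the whole problem.

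First, $f(g)\le f(1)=\mu_G(\Omega)$ for every $g$, so if $\mu_G(\Omega)=0$ then $\mathcal F(G)=\emptyset$ and we are done; assume $\mu_G(\Omega)>0$. Since $\Omega=\bigcap_{N\unlhd_{\circ} G}\pi_N^{-1}(\{\bar x\in G/N : \bar x^2=1\})$ is a decreasing intersection, $\mu_G(\Omega)=\inf_N|\{\bar x\in G/N:\bar x^2=1\}|/|G/N|$. For a finite group $Q$ the Frobenius--Schur count gives $|\{x\in Q:x^2=1\}|=\sum_{\chi\in\mathrm{Irr}(Q)}\nu_2(\chi)\chi(1)\le\sum_\chi\chi(1)\le(k(Q)|Q|)^{1/2}$ by Cauchy--Schwarz, where $k(Q)$ is the number of conjugacy classes; squaring yields $(|\{x:x^2=1\}|/|Q|)^2\le k(Q)/|Q|=\mathrm{cp}(Q)$. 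Applying this to $Q=G/N$ and taking infima gives $\mathrm{cp}(G)\ge\mu_G(\Omega)^2>0$.

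Positivity of $\mathrm{cp}(G)$ supplies the structural input. By P.~M.~Neumann's theorem, the finite quotients of $G$ admit normal subgroups of boundedly finite index with boundedly finite derived subgroup, so in the limit $G$ has an open normal subgroup $K$ with $K'$ finite. Put $N:=K'$: then $N$ is finite and normal in $G$, and $G/N$ is virtually abelian with open abelian subgroup $A/N$, where $A=K$. In any virtually abelian profinite group the automorphism of the abelian part induced by conjugation by $\bar x$ depends only on the coset $\bar x(A/N)$, so the fixed-point subgroup $C_{A/N}(\bar x)$ takes only finitely many values; hence every finite conjugacy class of $G/N$ has bounded size, and lifting through the finite kernel $N$ (each class of $G$ maps onto one of $G/N$ with fibres of size at most $|N|$) shows that every finite conjugacy class of $G$ has size at most some $M_0$. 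In particular $[G:C_G(g)]\le M_0$ for every $g\in\mathcal F(G)$.

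It remains to bound $f(g)=\mu_{C_G(g)}(\Omega\cap C_G(g))/[G:C_G(g)]$ from below; as the index is at most $M_0$, the task reduces to a uniform positive lower bound on the involution density $\mu_{C_G(g)}(\Omega\cap C_G(g))$ over $g\in\mathcal F(G)$. This is where $\mu_G(\Omega)>0$ is used a second time and more delicately: positivity of $\mu_G(\Omega)$ forces the $2$-part of the abelian group $A/N$ to be of the shape (finite $2$-group)$\times\prod C_2$, hence of bounded exponent $2^b$. I expect this bounded-exponent rigidity, combined with the bounded index $M_0$, to confine the positive values of $\mu_{C_G(g)}(\Omega\cap C_G(g))$ to a set bounded away from $0$, which furnishes $c_0$. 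Making this precise — controlling how involutions distribute among the boundedly many cosets of $A/N$, and thereby ruling out a net $g_i\to g$ with $f(g_i)\to 0^{+}$ while $f(g)=0$ — is the \textbf{main obstacle}, and it is the only step that genuinely needs the $2$-structure rather than just the commuting-probability estimate.
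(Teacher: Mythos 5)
Your opening reduction ($\mathcal F_G(g)=\Omega\cap C_G(g)$ for $g\in\Omega$, empty otherwise), your observation that it suffices to produce a constant $c_0>0$ with $f(g)>0\Rightarrow f(g)\ge c_0$ (so that $\mathcal F(G)=\mathcal F_{c_0}(G)$ is closed), and your derivation of a virtually abelian structure together with a uniform bound on $[G:C_G(g)]$ for $g\in\mathcal F(G)$ are all sound. (As an aside, your detour through the Frobenius--Schur count and Neumann's theorem only yields an open normal subgroup with \emph{finite} derived subgroup; the paper instead quotes L\'evai--Pyber \cite[Theorem 2]{LP}, which from $\mu_G(\Omega)>0$ gives an abelian open normal subgroup $N$ outright, and that stronger conclusion is what makes the rest of the argument run.) However, the proof is not complete: the step you yourself label the \textbf{main obstacle} --- a uniform positive lower bound on $\mu_{C_G(g)}(\Omega\cap C_G(g))$ over $g\in\mathcal F(G)$ --- is precisely the heart of the matter, and the idea you sketch for it is false. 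It is not true that $\mu_G(\Omega)>0$ forces the $2$-part of the abelian subgroup to be of bounded exponent: take $G=\mathbb Z_2\rtimes\langle t\rangle$ with $t$ of order $2$ acting by inversion. Every element of the coset $\mathbb Z_2t$ is an involution, so $\mu_G(\Omega)=1/2$, yet the abelian open normal subgroup $\mathbb Z_2$ is torsion-free. Involutions need not lie in, nor constrain the exponent of, the abelian subgroup; they typically fill its cosets.

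What is missing is the coset rigidity that the paper's proof exploits. With $N$ abelian, open and normal, the involutions in a coset $t_jN$ form a coset $t_jH_j$ of the subgroup $H_j=\{n\in N \mid n^{t_j}=n^{-1}\}$, and --- the key computation --- for a fixed involution $x=t_1h_1$ the set $\mathcal F_G(x)\cap t_jH_j$ is either empty or a coset of $C_{H_j}(t_1)$ in $H_j$: indeed, for $n\in H_j\le N$ one has $n(t_1h_1)=(t_1h_1)n$ iff $n^{t_1}=n$, so this subgroup depends only on the pair of cosets $(t_1N,t_jN)$ and not on $h_1$. Consequently every nonzero contribution $\mu(\mathcal F_G(x)\cap t_jH_j)$ equals the \emph{constant} $\mu(C_{H_j}(t_1))$; when $x\in\mathcal F(G)$ at least one such contribution is positive, which forces $|G:C_{H_j}(t_1)|<\infty$ and yields exactly your gap, $c_0=\min\{\mu(C_{H_j}(t_i))\mid \mu(C_{H_j}(t_i))>0\}$, the minimum being over finitely many pairs $(i,j)$ (the paper phrases this instead as: each piece $\Lambda_j$ of $\mathcal F(G)$ is either empty or a closed set $X_j$ cut out by the condition $t_1t_j^{h_1^{-1}}\in t_jt_1^{H_j}$). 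Without some such description of how commuting involutions distribute over the cosets of $N$, your bounded class size $M_0$ alone cannot exclude a sequence $g_i\in\mathcal F(G)$ with $f(g_i)\to 0^{+}$, so the argument stalls exactly where you said it does; and in the weaker finite-by-abelian-by-finite setting your route provides, even this coset analysis would be harder to carry out.
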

\begin{proof}
If $x,y \in G$ are such that $y \in \mathcal{F}_G(x)$ then $x,y$ have order at most $2$ and $xy=yx$. In particular, if $x \in \mathcal{F}(G)$, i.e. $\mu(\mathcal{F}_G(x))>0$, then the probability that an element of $G$ has order $2$ is positive. So we may assume that $\mu(\Omega_2(G))>0$, since otherwise $\mathcal{F}(G) = \varnothing$ is closed. By \cite[Theorem 2]{LP}, $G$ is abelian-by-finite, i.e. there exists an abelian open normal subgroup $N$ of $G$. Write $G=t_1N \cup \ldots \cup t_rN$ with $r=|G:N|$. If $t_iN$ contains elements of order $2$, then we may assume $t_i$ has order $2$ and the set of elements of order $2$ in $t_iN$ coincides with $t_iH_i$ where $H_i=\{n \in N\ |\ n^{t_i}=n^{-1}\} \leq N$. We assume that $t_i$ has order $2$ precisely when $1 \leq i \leq s$, where $s \leq r$. We want to prove that $\mathcal{F}(G)$ is closed. Since $\mathcal{F}(G)$ consists of elements of order at most $2$, it is enough to prove that $\mathcal{F}(G) \cap t_jH_j$ is closed for all $j=1,\ldots,s$. Without loss of generality, it is enough to prove that $Y=\mathcal{F}(G) \cap t_1H_1$ is closed. Let $x=t_1h_1 \in Y$ where $h_1 \in H_1$. There must exist $j$ such that $\mu(\mathcal{F}_G(x) \cap t_jH_j) > 0$. Let $$\Lambda_j := \{x \in Y\ |\ \mu(\mathcal{F}_G(x) \cap t_jH_j) > 0\}.$$
Then $Y=\Lambda_1 \cup \ldots \cup \Lambda_s$, so it is enough to prove that $\Lambda_j$ is closed for each $j=1,\ldots,s$. Assume $x = t_1h_1 \in \Lambda_j$. Note that the fact that $\Lambda_j \neq \varnothing$ implies that $H_j$ is a closed subgroup of $G$ with positive Haar measure, so $|G:H_j|$ is finite. If $t_jh_j \in t_jH_j$, then saying $t_jh_j \in \mathcal{F}_G(x)$ is equivalent to writing $t_1h_1t_jh_j=t_jh_jt_1h_1$. Since $H_1$ and $H_j$ are contained in $N$, which is abelian, we can restate this condition as $t_1 t_j^{h_1^{-1}} = t_j t_1^{h_j^{-1}}$. Moreover, $H_j \cap C_G(t_1h_1) = C_{H_j}(t_1)$ so $t_j^{-1} (\mathcal{F}_G(x) \cap t_jH_j)$ is a coset of $C_{H_j}(t_1)$ in $H_j$. So if $|G:C_{H_j}(t_1)|$ were infinite, then $\mu(\mathcal{F}_G(x) \cap t_jH_j)=0$ would imply $x \not \in \Lambda_j$, a contradiction. Therefore, if $\Lambda_j \neq \varnothing$, then $|G:C_{H_j}(t_1)|$ is finite. Thus, on one hand, we can conclude that all elements $t_1h_1 \in \Lambda_j$ are contained in $X_j =\{t_1h_1 : h_1 \in H_1 \text{ and }t_1t_j^{h_1^{-1}} \in t_jt_1^{H_j}\}$. On the other, if $x = t_1h_1 \in X_j$, from $\mu(\mathcal{F}_G(x) \cap t_jH_j) = \mu(C_{H_j}(t_1)) = 1/|G:C_{H_j}(t_1)| > 0$ it follows that $x$ is in $\Lambda_j$. Therefore, $\Lambda_j$ equals one of $\varnothing$, $X_j$, hence it is closed.
\end{proof}

Notice that the above proof relies heavily on the fact that, if $G$ satisfies the identity $x^2=1$ with positive probability, then it is virtually abelian. It is natural to wonder what happens for a class $\mathcal F$ of finite groups with uniformly bounded exponent, but we can't use the same strategy as above since we lack a similar structural result for profinite groups which satisfy $x^n=1$ with positive probability for $n>2$.

\begin{question}
    Let $\mathcal F$ be a class of finite groups closed under taking subgroups, quotients and direct products and assume that it has uniformly bounded exponent. Is there a profinite group $G$ such that $\mathcal F(G)$ is not closed?
\end{question}

\section{The class of finite solvable groups}\label{solv}

In this section, we discuss the case in which $\mathcal F$
coincides with the class $\mathcal S$ of the finite solvable groups. The set $\mathcal S_G(x)$ consists of all the  elements $y\in G$ such that the subgroup generated by $x$ and $y$ is prosolvable and is called the \emph{solvabilizer} of $x$ in $G.$ The characterization of elements with solvabilizer  of positive measure was proposed in \cite[Conjecture 1]{solv} and proved in \cite[Theorem 6]{solv} and \cite[Corollary 1.3]{fgg}. {The authors of these papers} obtained the following: given a profinite group $G$ and a positive integer $n$, let $\Sigma_n(G)$ be the set of the elements $g$ in $G$  that centralize all but at most $n$ nonabelian factors in a chief series of $G/N$ for every open normal subgroup $N$ of $G.$ Then, we have that $\mathcal S(G)=\cup_{n\in \mathbb N}(\Sigma_n(G))$, answering \cref{q1} and \cref{q3}: $\mathcal{S}(G)$ is a subgroup of $G$ which admits a group-theoretical characterization. For \cref{q2}, it suffices to use \cref{allfingrps} to deduce that $\mathcal{S}(G)$ is not closed in $G$ in general.

Our first aim in this section is to use the aforementioned characterization and obtain a better understanding of the structure of $\mathcal S(G).$ If $G$ is a profinite group, then we will denote by $R(G)$ the largest subgroup of $G$ which is closed, normal and prosolvable. It is well known that $g\in R(G)$ if and only if $gN\in R(G/N)$ for every open normal subgroup $N$ of $G.$

\begin{lemma}
Let $G$ be a finite group. If $g$ centralizes all the nonabelian chief factors of $G$, then $g\in R(G).$
\end{lemma}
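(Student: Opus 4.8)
The plan is to pass to the quotient $\overline{G}=G/R(G)$ and show that the image $\overline{g}$ of $g$ is trivial; since for a finite group $g\in R(G)$ if and only if $\overline{g}=1$, this is equivalent to the claim. Write $R=R(G)$. The essential feature of this quotient is that it has trivial solvable radical, $R(\overline{G})=1$, and consequently $\overline{G}$ has no abelian minimal normal subgroup.

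Next I would analyse the socle $S=\soc(\overline{G})$. Since every minimal normal subgroup of $\overline{G}$ is nonabelian, $S=M_1\times\cdots\times M_r$ is a direct product of nonabelian minimal normal subgroups, each $M_j$ being itself a direct product of isomorphic nonabelian simple groups. The key structural fact I would establish is that $C_{\overline{G}}(S)=1$: the centralizer $C=C_{\overline{G}}(S)$ is normal in $\overline{G}$, and if it were nontrivial it would contain a minimal normal subgroup $M$ of $\overline{G}$; but $M\le S$ together with $M\le C$ would force $M$ to centralize itself, i.e. $M$ abelian, contradicting $R(\overline{G})=1$. (Equivalently, this is the standard identity $C_{\overline{G}}(F^*(\overline{G}))=Z(F^*(\overline{G}))=1$.)

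Finally I would translate the hypothesis through the quotient. Let $\widehat{S}$ be the preimage of $S$ in $G$ and choose a chief series of $G$ refining $1\le R\le \widehat{S}\le G$; between $R$ and $\widehat{S}$ its factors are precisely the ones realizing $M_1,\dots,M_r$, which are nonabelian chief factors of $G$. By hypothesis $g$ centralizes each of these factors, and centralizing the factor lying over $M_j$ means exactly that $\overline{g}$ centralizes $M_j$ inside $\overline{G}$. Hence $\overline{g}$ centralizes $S=M_1\cdots M_r$, so $\overline{g}\in C_{\overline{G}}(S)=1$, and therefore $g\in R=R(G)$.

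The main obstacle I anticipate is bookkeeping rather than conceptual: one must verify carefully that ``$g$ centralizes the chief factor $\widehat{M}_j/\widehat{M}_{j-1}$ of $G$'' is equivalent to ``$\overline{g}$ centralizes the minimal normal subgroup $M_j$ of $\overline{G}$'', and that the factors occurring between $R$ and $\widehat{S}$ exhaust the minimal normal subgroups of $\overline{G}$ (which is where $R(\overline{G})=1$ is used, guaranteeing no abelian minimal normal subgroups are missed). Once these identifications are made, the result reduces to the elementary fact that the centralizer of the socle is trivial when the solvable radical is trivial.
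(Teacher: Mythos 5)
Your proof is correct, and it takes a genuinely different route from the paper's. The paper argues by induction on $|G|$: choose a minimal normal subgroup $N$, apply induction to $G/N$ to get $g\in K$ where $K/N=R(G/N)$, then split into cases --- if $N$ is abelian then $K=R(G)$; if $N$ is nonabelian then $C_K(N)\cap N=Z(N)=1$, so $C_K(N)$ is solvable and normal, hence contained in $R(G)$, and $g\in C_K(N)$ because $g$ centralizes the chief factor $N$. You instead argue globally in $\overline{G}=G/R(G)$: trivial radical forces every minimal normal subgroup of $\overline{G}$ to be nonabelian, the socle $S=M_1\times\cdots\times M_r$ satisfies $C_{\overline{G}}(S)=1$, and pushing the hypothesis through the quotient gives $\overline{g}\in C_{\overline{G}}(S)=1$. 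Both arguments turn on the same elementary mechanism (a normal subgroup centralizing a nonabelian minimal normal subgroup meets it in its trivial center, hence is solvable or trivial), but the trade-offs differ. The inductive proof never mentions the socle and needs no Jordan--H\"older bookkeeping, since at each step only the minimal normal subgroup at the bottom of the series is used directly. Your proof does need the identification you flag: either take the chief series through the preimages of $1\le M_1\le M_1M_2\le\cdots\le S$, where directness of the product converts $[\overline{g},M_1\cdots M_j]\le M_1\cdots M_{j-1}$ together with $[\overline{g},M_j]\le M_j$ into $[\overline{g},M_j]=1$, or invoke the standard fact that $G$-isomorphic chief factors have equal centralizers. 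In exchange, your version yields a cleaner structural picture: when $R(\overline{G})=1$, the group $\overline{G}$ acts faithfully on its socle, and the hypothesis says precisely that $\overline{g}$ acts trivially. The ``obstacle'' you anticipate is indeed only bookkeeping, and your argument is complete once that verification is written out.
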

\begin{proof}
	We prove the statement by induction on $|G|.$ Let $N$ be a minimal normal subgroup of $G$ and let $K/N=R(G/N)$. By induction $g\in K.$ If $N$ is abelian, then $K=R(G)$ and we are done. If $N$ is nonabelian, then $C_K(N)\cap N=Z(N)=1.$ Thus $C_K(N)$ is solvable, and therefore $C_K(N)\leq R(K)\leq R(G).$ Since $g\in C_K(N),$ we are done.
\end{proof}

From now on, if $G$ is a profinite group and $g \in G$, we will write that $g$ centralizes all the nonabelian chief factors of $G$ to mean that $gM$ centralizes all the nonabelian chief factors of $G/M$ for every open normal subgroup $M$ of $G.$

\begin{cor}\label{corradical}
Let $G$ be a profinite group. If $g \in G$ centralizes all the nonabelian chief factors of $G$, then $g\in R(G).$
\end{cor}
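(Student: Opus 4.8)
The plan is to reduce the profinite statement to the finite case already established in the preceding \textbf{Lemma}, using the standard fact that membership in the prosolvable radical can be tested modulo open normal subgroups. Specifically, I would invoke the characterization recalled just before the corollary: for a profinite group $G$, an element $g$ lies in $R(G)$ if and only if $gN \in R(G/N)$ for every open normal subgroup $N$ of $G$.

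\begin{proof}[Proof of \cref{corradical}]
By the remark preceding the statement, it suffices to show that $gN \in R(G/N)$ for every open normal subgroup $N$ of $G$. Fix such an $N$ and set $\overline{G} = G/N$, a finite group, and $\overline{g} = gN$. By the convention introduced immediately before the corollary, the hypothesis that $g$ centralizes all the nonabelian chief factors of $G$ means precisely that $gN$ centralizes all the nonabelian chief factors of $G/N$ for every open normal $N$; in particular $\overline{g}$ centralizes all the nonabelian chief factors of the finite group $\overline{G}$. Applying the previous \textbf{Lemma} to the finite group $\overline{G}$, we conclude that $\overline{g} \in R(\overline{G}) = R(G/N)$. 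Since $N$ was an arbitrary open normal subgroup, it follows that $gN \in R(G/N)$ for every such $N$, and hence $g \in R(G)$.
\end{proof}

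The argument is essentially a bookkeeping passage between the finite and profinite settings, so there is no genuinely hard step. The one point that requires care is matching the definitions: one must check that the phrase ``$g$ centralizes all the nonabelian chief factors of $G$'' for profinite $G$, which is defined through the quotients $G/M$, correctly specializes to the finite hypothesis of the \textbf{Lemma} when applied to $G/N$. Here I use that a chief factor of $G/N$ is, by the correspondence theorem, a section of $G$ of the form $A/B$ with $N \le B \lhd A \lhd G$, so the finite chief factors of $G/N$ are exactly those appearing in the profinite condition above $N$; thus the hypothesis transfers verbatim. The equality $R(\overline{G}) = R(G/N)$ is immediate since $\overline{G}$ is literally $G/N$ and $R$ of a finite group is the usual solvable radical.
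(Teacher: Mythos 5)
Your proof is correct and is essentially the paper's own argument: the corollary is left in the paper as an immediate consequence of the preceding lemma, the convention defining the profinite hypothesis quotient-by-quotient, and the stated fact that $g \in R(G)$ if and only if $gN \in R(G/N)$ for every open normal subgroup $N$ of $G$. Your write-up simply makes that reduction explicit, including the (correct) check that chief factors of $G/N$ are exactly the sections appearing in the profinite condition above $N$.
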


\begin{lemma}\label{fccenter}Let $G$ be a profinite group. Then $R(G) \leq \mathcal S(G)$. Moreover $\mathcal S(G)/R(G)$ is contained in the FC-center of $G/R(G)$ and all elements of $\mathcal S(G)/R(G)$ have finite order.
\end{lemma}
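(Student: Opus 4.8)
The plan is to prove the three assertions of \cref{fccenter} in turn, relying on the characterization $\mathcal S(G)=\cup_{n\in\mathbb N}\Sigma_n(G)$ recalled above and on \cref{corradical}.

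\textbf{Step 1: $R(G)\leq \mathcal S(G)$.} First I would observe that $R(G)=\Sigma_0(G)$, or at least $R(G)\subseteq \Sigma_0(G)$: if $g\in R(G)$ then $gN\in R(G/N)$ for every open normal $N$, and an element of the prosolvable radical centralizes every nonabelian chief factor (since such a factor is a section of $G/R(G)$ on which $R(G)$ acts trivially, because $R(G)$ is normal and solvable so it cannot map onto a nontrivial subgroup of the nonabelian factor — more precisely, $R(G/N)$ intersects each nonabelian chief factor of $G/N$ trivially and centralizes it). Hence $g$ centralizes all but $0$ nonabelian chief factors, so $g\in\Sigma_0(G)\subseteq \mathcal S(G)$.

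\textbf{Step 2: $\mathcal S(G)/R(G)$ lies in the FC-center of $G/R(G)$.} Set $\ol G=G/R(G)$, so that $R(\ol G)=1$ and $\ol G$ has no nontrivial closed normal prosolvable subgroup; in particular the socle of every finite quotient of $\ol G$ is a product of nonabelian chief factors. Let $g\in \mathcal S(G)$, say $g\in\Sigma_n(G)$, so that $\ol g:=gR(G)$ centralizes all but at most $n$ nonabelian chief factors in a chief series of each finite quotient $\ol G/\ol N$. The key point is that for such a quotient the nonabelian chief factors are permuted by $\ol G$ by conjugation, and $\ol g$ centralizing all but at most $n$ of them forces the conjugacy class of $\ol g$ (equivalently the orbit data of the factors it moves) to be bounded. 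Concretely, I would argue that the index $|\ol G/\ol N : C_{\ol G/\ol N}(\ol g\ol N)|$ is bounded independently of $\ol N$: an element centralizing all but $n$ nonabelian factors has centralizer of index bounded in terms of $n$ and the orders of those $\le n$ factors, using that $R(\ol G)=1$ means there is no abelian ``padding'' that could make the centralizer small while the factor count stays bounded. This uniform bound on the index of the centralizer in every finite quotient is exactly the statement that $\ol g$ has a finite conjugacy class in $\ol G$, i.e. $\ol g$ belongs to the FC-center.

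\textbf{Step 3: every element of $\mathcal S(G)/R(G)$ has finite order.} Here I would use that $\ol g$ centralizes all but at most $n$ nonabelian chief factors of each finite quotient of $\ol G$, and that $R(\ol G)=1$. The soluble residual structure forces a power $\ol g^k$ (with $k$ bounded in terms of $n$ and the automorphism groups of the finitely many relevant simple factors) to centralize \emph{all} nonabelian chief factors, whence $\ol g^k\in R(\ol G)=1$ by \cref{corradical} applied in $\ol G$. Thus $\ol g$ has finite order.

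The main obstacle I expect is Step 2, specifically turning ``centralizes all but at most $n$ nonabelian chief factors'' into a \emph{uniform} bound on the index of the centralizer across all finite quotients. The subtlety is that a priori $\ol g$ could move a bounded number of factors but still have small centralizer because of how it acts \emph{inside} those factors; the crucial use of $R(\ol G)=1$ is that the nonabelian socle layers control the whole group, so an element normalizing the factor-partition with bounded displacement genuinely has bounded centralizer index. Making this compatibility between the chief series of different quotients precise — so that the same $n$ and the same bounded set of moved factors can be tracked as $\ol N$ shrinks — is the technical heart of the argument, and it is where the definition of $\Sigma_n(G)$ via \emph{every} open normal subgroup does the real work.
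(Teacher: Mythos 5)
Your Step 1 is correct (and the paper's version is even more elementary: for $g \in R(G)$ and any $x \in G$ one has $\langle g,x\rangle \leq R(G)\langle x\rangle$, which is prosolvable, so $\mathcal S_G(g)=G$). The genuine gap is in Step 2, and it infects Step 3. The central claim there --- that in each finite quotient $\overline{G}/\overline{N}$ an element centralizing all but at most $n$ nonabelian chief factors has centralizer index bounded in terms of $n$ and the orders of those factors, because ``$R(\overline{G})=1$ means there is no abelian padding'' --- is false. First, $R(\overline{G})=1$ is a statement about the profinite group: finite quotients of $\overline{G}$ may well have nontrivial solvable radical and many abelian chief factors, and $\overline{g}$ may act nontrivially on those; centralizing nonabelian chief factors says nothing about this part of the action. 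For instance, in a Frobenius group $Q=C_p\rtimes C_{p-1}$ the generator of $C_{p-1}$ centralizes every nonabelian chief factor (there are none) yet has centralizer of index $p$, so in finite groups the chief-factor condition simply does not control centralizer indices, and your sketch offers no reason why such behaviour cannot occur among the quotients $\overline{G}/\overline{N}$. Second, even for the at most $n$ nonabelian factors that are moved, their orders are unbounded as $\overline{N}$ shrinks, so a bound ``in terms of $n$ and the orders of those factors'' is not a uniform bound. In short, the hypothesis $R(G)=1$ cannot be exploited quotient-by-quotient; it must be used globally, and the proposal gives no mechanism for doing so.

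The paper's proof supplies exactly this mechanism, and it is different from what you propose. From $g\in\Sigma_n(G)$ one first extracts a single open normal subgroup $N$ of $G$ such that $g$ centralizes \emph{all} the nonabelian chief factors of $N$ (the at most $n$ exceptional factors are confined above $N$). Setting $K=N\langle g\rangle$, \cref{corradical} applied to $K$ gives $g\in R(K)$. Now $R(N)$ is characteristic in $N$, hence normal in $G$, and prosolvable, so $R(G)=1$ forces $R(N)=1$; consequently $R(K)\cap N\leq R(N)=1$, and since $R(K)$ and $N$ are both normal in $K$, this yields $[R(K),N]=1$. Thus $N\leq C_G(g)$, so $C_G(g)$ is open and $g$ lies in the FC-center; moreover $\langle g\rangle\cap N\leq R(K)\cap N=1$ shows at once that $g$ has finite order. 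Note that this last point also disposes of your Step 3, whereas your argument there (a power $\overline{g}^{\,k}$ with $k$ controlled by ``the finitely many relevant simple factors'') presupposes precisely the stabilization of the exceptional factors inside one finite quotient that was missing from Step 2.
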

\begin{proof}
For every $g \in R(G)$ and every $x \in G$, we have that $\la x,g\ra \leq R(G)\langle x \rangle$, which is prosolvable, so $\mathcal S_G(g) = G$ and $R(G) \leq \mathcal S(G)$. Thus is not restrictive to assume $R(G)=1$. Let $g\in \mathcal S(G).$ There exists an open normal subgroup $N$ of $G$ such that $g$ centralizes all the nonabelian chief factors of $N.$ Let $K=N\langle g\rangle$. Then, by \cref{corradical}, $g\in R(K).$ Since $R(G)=1$, we must have $R(N)=1$. In particular this implies $R(K)\cap N=1$, so $[R(K),N] = 1$. It follows that $N\leq C_G(g)$, therefore $g$ is contained in the $FC$-center of $G$ . Moreover $g$ has finite order since $\langle g\rangle \cap N = 1$.
\end{proof}

\begin{lemma}\label{inclus}Let $G$ be a profinite group. Then the $FC$-center of $G$ is contained in $\mathcal S(G).$
\end{lemma}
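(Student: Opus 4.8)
The plan is to exploit the characterization $\mathcal S(G)=\bigcup_{n\in\mathbb N}\Sigma_n(G)$ recalled at the start of this section, so that it suffices to show that every element $g$ of the FC-center of $G$ lies in $\Sigma_n(G)$ for a single $n$ depending only on $m:=|G:C_G(g)|$, which is finite by assumption. The whole statement then reduces to a quantitative fact about finite groups: in a finite group $H$, an element $g$ can fail to centralize at most $\log_2|H:C_H(g)|$ of the nonabelian factors of any chief series.

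To prove this finite fact I would fix a chief series $1=H_0<H_1<\cdots<H_r=H$, set $V_i=H_i/H_{i-1}$ and $C=C_H(g)$, and intersect the series with $C$. Telescoping the chain $1=C\cap H_0\leq\cdots\leq C\cap H_r=C$ against the chain of the $H_i$ gives the identity
$$|H:C|=\prod_{i=1}^r\frac{|H_i:H_{i-1}|}{|C\cap H_i:C\cap H_{i-1}|}=\prod_{i=1}^r|V_i:\pi_i(C\cap H_i)|,$$
where $\pi_i\colon H_i\to V_i$ is the projection. The key observation is that $\pi_i(C\cap H_i)\subseteq C_{V_i}(g)$: an element of $C\cap H_i$ commutes with $g$, so its image in $V_i$ is fixed by the conjugation action of $g$. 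Hence each factor of the product is at least $|V_i:C_{V_i}(g)|\geq 1$, and it is at least $2$ whenever $g$ does not centralize the nonabelian factor $V_i$ (nontriviality of the action forces $C_{V_i}(g)\neq V_i$). Discarding all other factors by the trivial bound $\geq 1$ yields $|H:C_H(g)|\geq 2^t$, where $t$ is the number of nonabelian chief factors not centralized by $g$; thus $t\leq\log_2|H:C_H(g)|$.

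The final step is the routine passage to the inverse limit. For $g$ in the FC-center, $C_G(g)$ is a closed subgroup of finite index $m$, and for every open normal $N$ we have $C_G(g)N/N\leq C_{G/N}(gN)$, so $|G/N:C_{G/N}(gN)|\leq m$. Applying the finite fact to $G/N$ shows that $gN$ centralizes all but at most $\log_2 m$ nonabelian chief factors of $G/N$; since this holds for every $N$, we conclude $g\in\Sigma_n(G)$ with $n=\lfloor\log_2 m\rfloor$, hence $g\in\mathcal S(G)$. The only nontrivial point is the finite fact, and within it the combination of the telescoping identity with the containment $\pi_i(C\cap H_i)\subseteq C_{V_i}(g)$; nonabelianity enters solely to guarantee the constant $2$, while abelian factors are simply dropped.
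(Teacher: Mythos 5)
Your proof is correct, but the key step is genuinely different from the paper's. The paper argues softly: since $g$ lies in the FC-center, $C_G(g)$ is open and therefore contains an open normal subgroup $N$ of $G$ (its normal core); then $g$ centralizes every chief factor of every finite quotient lying below the image of $N$, and only the factors above it — at most $\log_2|G:N|$ of them in any chief series — can fail to be centralized, so $g\in \Sigma_n(G)\subseteq \mathcal S(G)$. You instead prove a quantitative lemma about finite groups, namely that the number of chief factors not centralized by $g$ is at most $\log_2|H:C_H(g)|$, via the telescoping identity $|H:C_H(g)|=\prod_i |V_i:\pi_i(C\cap H_i)|$ combined with the containment $\pi_i(C\cap H_i)\subseteq C_{V_i}(g)$, and then transfer it to the quotients $G/N$ using $|G/N:C_{G/N}(gN)|\leq |G:C_G(g)|$. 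Both routes feed into the characterization $\mathcal S(G)=\bigcup_n\Sigma_n(G)$, but yours gives a sharper and more self-contained bound: it depends only on the centralizer index $m$, not on the index of the normal core of $C_G(g)$ (which can be far larger), and it never uses that $C_G(g)$ contains a normal subgroup of finite index, only that its index is finite. The paper's argument buys brevity. One small inaccuracy in your commentary: nonabelianity is not needed even for the constant $2$ — any chief factor, abelian or not, that $g$ fails to centralize contributes a factor at least $2$ to the product, so your lemma in fact bounds the total number of uncentralized chief factors; you restrict attention to the nonabelian ones only because those are what $\Sigma_n(G)$ counts.
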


\begin{proof}
Let $g$ be an element of the $FC$-center of $G$. Then $C_G(g)$ is open and therefore it contains an open normal subgroup $N$ of $G$. Clearly $g$ centralizes all the nonabelian chief factors of $G$ contained in $N$ and therefore $g\in \mathcal S(G).$
\end{proof}

Recall that if $G$ is a profinite group, $N \unlhd_c G$ and $\pi \colon G \to G/N$ is the natural projection, then $\mu_{G/N}(S) = \mu_G (\pi^{-1}(S))$ for every measurable subset $S$ of $G/N$. If $R=R(G)$, then one can prove that $\pi^{-1}(S_{G/N}(gR)) = S_G(g)$ and so $\mathcal S(G/R(G)) = \mathcal S(G)/R(G)$.

\begin{proof}[Proof of \cref{descs}]
The first part of the statement  follows from Lemmas \ref{fccenter} and \ref{inclus}. Furthermore, if $\mathcal S(G)$ is closed, then $\mathcal S(G)/R(G)$ is a profinite FC-group and contains an abelian open normal subgroup $N/R(G)$ by \cite[Lemma 2.6]{sh}. The preimage of $N$ is open and prosolvable in $\mathcal S(G)$, so $\mathcal S(G)/R(G)$ is finite.
\end{proof}

Note that when $R(G)=1$ every element in $\mathcal S(G)$ has finite order. This immediately gives examples of profinite groups $G$ such that $\mathcal S(G) = 1$; it suffices to consider torsion-free profinite groups with trivial prosolvable radical (\emph{e.g.}, non-abelian free profinite groups). For examples of groups $G$  with $\mathcal S(G)=1$ but containing non-trivial elements of finite order, one may look at the hereditarily just-infinite, non-virtually prosolvable groups obtained by infinitely iterated wreath products (see \cite{mv}).

\

 Before proving \cref{musolpos}, we need a preliminary lemma.

% \begin{thm}\label{sigmapos}
% 	Let $G$ be a profinite group. If $\mu(\mathcal S(G))>0,$ then $G$ is virtually prosolvable, and consequently $\mathcal S(G)=G.$
% \end{thm}
% Before proving this theorem, we need a preliminary lemma.

\begin{lemma}\label{subseries}
	Let $G$ be a finite group, $H$ a normal subgroup of $G$ and $N$ a nonabelian minimal normal subgroup of $G$ contained in $H$. If $M$ is a normal subgroup of $H,$ then an $H$-chief series of $NM/M$ has length at most $|G:H|.$
\end{lemma}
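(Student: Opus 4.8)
The plan is to exploit the structure of a nonabelian minimal normal subgroup. Since $N$ is a nonabelian minimal normal subgroup of $G$, it is a direct product $N = S_1 \times \cdots \times S_k$ of pairwise isomorphic nonabelian simple groups, and $G$ acts transitively by conjugation on the set $\Omega = \{S_1, \ldots, S_k\}$ of simple factors. First I would record the standard fact that every normal subgroup of such an $N$ is a subproduct $\prod_{i \in J} S_i$: if $x \in K \trianglelefteq N$ had a nontrivial coordinate in some $S_i$, a suitable commutator $[s,x]$ with $s \in S_i$ would be a nontrivial element of $K \cap S_i$, forcing $S_i \leq K$ by simplicity.

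Next I would identify $N \cap M$. Because $N \trianglelefteq G$ and $M \trianglelefteq H$, the intersection $N \cap M$ is normal in $H$; being a normal subgroup of $N$, it is a subproduct $\prod_{i \in I} S_i$, and since it is $H$-invariant the index set $I \subseteq \{1,\ldots,k\}$ is a union of $H$-orbits on $\Omega$. Passing to the quotient gives $NM/M \cong N/(N\cap M) \cong \prod_{j \notin I} S_j$, a normal subgroup of $H/M$ on which $H$ acts by permuting the surviving factors through the same conjugation action.

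The key computation is then that the length of an $H$-chief series of $NM/M$ equals the number of $H$-orbits on $\{j : j \notin I\}$. For each such orbit $O$, the subproduct $\prod_{j \in O} S_j$ is an $H$-invariant normal subgroup that is $H$-minimal, because transitivity of $H$ on $O$ rules out any proper nontrivial $H$-invariant subproduct; hence these orbit-products are exactly the $H$-chief factors, and their number is the series length. This number is at most the total number $t$ of $H$-orbits on $\Omega$, since $\{j \notin I\}$ is itself a union of $H$-orbits.

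Finally I would bound $t$, which is the step I expect to carry the real content. Since $G$ acts transitively on $\Omega$ and $H \trianglelefteq G$, the $H$-orbits form a system of blocks that $G/H$ permutes transitively; a transitive action of $G/H$ on these $t$ blocks exhibits $t$ as the index of a point stabilizer in $G/H$, so $t$ divides $|G:H|$, and in particular $t \leq |G:H|$. Combining the three displays, the $H$-chief length of $NM/M$ is at most $t \leq |G:H|$, as required. Everything apart from this orbit-counting argument is routine bookkeeping about subproducts of a direct power of a nonabelian simple group, so the point to get right is precisely the claim that the orbits of a normal subgroup in a transitive action are equal-sized blocks permuted transitively by the quotient.
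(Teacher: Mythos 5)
Your proof is correct and follows essentially the same route as the paper's: decompose $N$ as a product of isomorphic nonabelian simple groups permuted transitively by $G$, use normality of $H$ in $G$ to see that the $H$-orbits on the factors number at most $|G:H|$, identify $NM/M \cong_H N/(N\cap M)$, and match $H$-chief factors with $H$-orbits on the surviving simple factors. The only difference is that you spell out the routine facts (normal subgroups of a product of nonabelian simple groups are subproducts, and the divisibility $t \mid |G:H|$) that the paper leaves implicit.
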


\begin{proof}
	We have that $N=S_1\times \dots \times S_u,$ where $S_1,S_2,\dots,S_u$ are isomorphic nonabelian simple groups.
	Since $N$ is a minimal normal subgroup of $G$, $G$ permutes transitively the factors $\{S_1,\dots,S_u\}.$ Since $H$ is normal in $G,$ the orbits of $H$ on $\{S_1,\dots,S_u\}$ have all the same size, and the number of these orbits is at most $t=|G:H|.$ Notice that $NM/M \cong_H N/N\cap M$ and a minimal $H$-subgroup of $N/N\cap M$ corresponds to an $H$-orbit on $\{S_1,\dots,S_u\}.$ Thus an $H$-chief series of $NM/M$ has length at most $t.$
\end{proof}
 
\begin{proof}[Proof of \cref{musolpos}]
	Assume $\mu(\mathcal S(G))>0.$ Since $\mathcal S(G)$ is a subgroup of $G$, we must have that $\mathcal S(G)$ has finite index in $G$. Moreover by \cite[Lemma 1.1 (2)]{bm} $\mathcal S(G)$ is closed in $G.$ Let $t=|G:\mathcal S(G)|$  and let $g\in \mathcal S(G)$. Then there exists $m\in \mathbb N$, such that, for every open normal subgroup $N$ of $G,$ $g$ centralizes all but $m$ nonabelian factors in any chief series of $G/N.$ Now let $M$ be an open normal subgroup of $\mathcal S(G)$. The normal core $K$ of $M$ in $G$ is open in $G$. We may so find a chief series
	$1=N_0/K < \dots < N_u/K=\mathcal S(G)/K<\dots  < N_v/K=G/K$ of $G/K$. Then
	$N_0M/M \leq \dots \leq N_uM/M=\mathcal S(G)/M$ is a normal chain of $\mathcal S(G)/M.$ Notice that $N_iM/N_{i+1}M$ is not necessarily a chief factor of $\mathcal S(G)/M$, but by Lemma \ref{subseries} if $N_i/N_{i+1}$ is nonabelian, then the length of a $\mathcal S(G)$-chief series of
	$NM/M$ is at most $t$. Moreover if $g$ centralizes $N_iM/N_{i+1}M$, then it centralizes all the factors of a $\mathcal S(G)$-chief series of $N_iM/N_{i+1}M.$ Since $g$ centralizes all but $m$ of the nonabelian factors $N_i/N_{i+1}$, we conclude that $g$ centralizes all but $m\cdot t$  nonabelian factors of a chief series of $\mathcal S(G)/M.$ We have so proved that $g\in \mathcal S(\mathcal S(G)),$ that is, $\mathcal S(\mathcal S(G))=\mathcal S(G).$ By \cite[Theorem 1.2]{aemp}, $\mathcal S(G)$ is virtually prosolvable. But then, since $\mathcal S(G)$ is open in $G,$ $G$ also is virtually prosolvable.
\end{proof}

%Another generalization of \cite[Theorem 1.2]{aemp} is the following, done by considering solely the solvabilizer of elements of odd order.

We need a preliminary lemma for the proof of \cref{thodd}.

\begin{lemma}\label{oddele}
Let $N$ be a nonabelian minimal normal subgroup of a finite group $G$. If $g\in G$ and the order of $g$ modulo $N$ is odd, then there exists $n\in N$ such that $|gn|$ is odd and $gn \notin C_G(N).$
\end{lemma}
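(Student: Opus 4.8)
The plan is to first produce a single element of odd order in the coset $gN$, and then, if that element happens to centralize $N$, to perturb it by a nontrivial element of $N$ of odd order so as to leave $C_G(N)$ while preserving odd order. For the first part, write $|g| = 2^a m$ with $m$ odd, and let $k$ be the (odd) order of $gN$ in $G/N$. Since $k \mid |g|$ and $k$ is odd, $k \mid m$, so $g^m = (g^k)^{m/k} \in N$; moreover $g^m$ has $2$-power order, so it is a $2$-element of $N$. Choosing integers $s,t$ with $sm + t2^a = 1$, I would write $g = (g^m)^s (g^{2^a})^t$ as a product of two commuting powers of $g$, the first a $2$-element lying in $N$ and the second of odd order. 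Then $g_0 := g\,(g^m)^{-s} = (g^{2^a})^t$ lies in $gN$ and has odd order.

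If $g_0 \notin C_G(N)$ we are done, so suppose $g_0$ centralizes $N$. Since $N$ is a nonabelian minimal normal subgroup, it is a direct product of nonabelian simple groups, so $|N|$ is divisible by an odd prime and hence $N$ contains a nontrivial element $n$ of odd order; note also that $C_G(N)\cap N = Z(N) = 1$, so $n \notin C_G(N)$. Because $g_0$ centralizes $N$, it commutes with $n$, and as both $g_0$ and $n$ have odd order the subgroup they generate is abelian of odd order, whence $g_0 n$ again has odd order. Finally, the conjugation action of $g_0 n$ on $N$ coincides with that of $n$ (as $g_0$ acts trivially on $N$), so $g_0 n$ centralizes $N$ if and only if $n \in Z(N) = 1$; since $n \neq 1$, we conclude $g_0 n \notin C_G(N)$. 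Writing $g_0 n = g\big((g^m)^{-s} n\big)$ exhibits the required element of the form $gn'$ with $n' \in N$.

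The only genuinely delicate point is producing the odd-order representative $g_0$ in the coset $gN$; everything else follows from the two structural facts $Z(N)=1$ and the existence of a nontrivial odd-order element in $N$, both immediate from $N$ being a product of nonabelian simple groups. I expect the writeup to be short and entirely elementary, requiring neither the classification of finite simple groups nor any deep structure theory beyond these observations.
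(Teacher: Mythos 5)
Your proof is correct and follows essentially the same route as the paper: replace $g$ by its odd part (which lies in the coset $gN$ because the $2$-part of $g$ falls into $N$, exactly as you argue via the explicit B\'ezout decomposition), and then, if that representative centralizes $N$, multiply by a nontrivial odd-order element of $N$, using $Z(N)=C_G(N)\cap N=1$. The paper merely compresses your first paragraph into the standard statement that $g=g_1g_2$ with $g_1$ of odd order, $g_2$ a $2$-element, and $[g_1,g_2]=1$; no substantive difference remains.
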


\begin{proof}
We may write $g=g_1g_2$ with $|g_1|$ odd, $|g_2|$ a 2-power and $[g_1,g_2]=1.$ Since $g$ has odd order modulo $N,$ $g_2\in N.$ So $g_1\in gN.$ This means that we may assume that $|g|$ is odd. If $g\notin C_G(N)$ we are done. Assume that $g\in C_G(N)$. In this case let $n$ be a non-trivial element of odd order in $N$. Then $|gn|$ is odd, and $gn\notin C_G(N)$ (since $n\notin C_G(N))$.
\end{proof}

%\begin{proof}[Proof of \cref{thodd}]
%With iterated applications of Lemma \ref{oddele}, arguing as in the proof of \cite[Corollary 22]{pel}, we may construct an element $g\in G$ of odd order and which centralizes no nonabelian chief factor of $G.$ By assumption, the solvabilizer of $g$ has positive Haar measure and therefore $g$ centralizes almost all the nonabelian chief factors of $G$. It follows that a chief series of $G$ \textcolor{blue}{(Martino) Abbiamo definito cos'\`e una chief series di un gruppo profinito?} \textcolor{green}{(Nowras) No, e penso che non sia possibile definirlo bene per gruppi profiniti qualsiasi. La riscrivo con più parole per essere più chiaro.} contains only finitely many nonabelian factors and therefore $G$ is virtually prosolvable.
%\end{proof}

\begin{proof}[Proof of \cref{thodd}]
We prove the contrapositive. Arguing as in the proof of \cite[Corollary 22]{pel} and using \cref{oddele}, we construct an element $g\in G$ of odd order and which centralizes no nonabelian chief factor of $G$. We repeat the argument as follows. If $G$ were not virtually prosolvable, we would be able to find a descending chain of open normal subgroups
\[G \geq N_1 > M_1 \geq N_2 > M_2 \geq \ldots\]
such that $N_i/M_i$ is a nonabelian minimal normal subgroup of $G/M_i$. It is not restrictive to suppose that $\cap_{i\in \mathbb N} M_i = 1$, since for any $g \in G$ and $K\unlhd_c G$, we have $\mu(\mathcal S_G(g)) \leq \mu(\mathcal S_{G/K}(gK))$. Thus, we may assume $G \cong \varprojlim_{i \in \mathbb N} G/M_i$ and the elements of $G$ will be of the form $(g_iM_i)_{i \in \mathbb N}$ such that $g_iM_i = g_jM_i$ whenever $j\geq i$. Take an element $g_1 \in G$ of odd order and consider $g_1M_2$. Its order modulo $N_2/M_2$ is odd, so by \cref{oddele} we may can choose $g_2 \in g_1N_2$ such that $g_2M_2$ does not centralize $N_2/M_2$. It is clear that $g_2 M_1 = g_1M_1$. By induction, we obtain elements $g_iM_i$ such that $g=(g_iM_i)$ is an element of $G$. Further, for every positive integer $n$, we can find $M\unlhd_o G$ such that 
$g$ does not centralize at least $n$ nonabelian chief factors of $G/M$. It follows that $\mu(\mathcal S_G(g))=0$.
\end{proof}

Finally, we make a remark regarding a result of Larsen and Shalev. In \cite{ls}, they prove the following: {\sl{Let $G$ be a profinite group and suppose that the set of elements of $G$ of finite odd order has positive Haar measure. Then $G$ has a prosolvable open normal subgroup.}} This motivates the following question:	{\sl{Let $G$ be a profinite group and suppose that the set of elements whose order, as a supernatural number, is odd has positive Haar measure. Is $G$ virtually prosolvable?}} We give a counterexample as follows.

\

Let $G=\prod_{n \geq 2}{\rm{SL}}(2,2^n)$ and denote by $\mathcal S_{\rm{odd}}$ the class of odd order finite solvable groups. Since $\mathcal S_{\rm{odd}}(G) \subseteq \mathcal S (G)$
and $R(G)=1,$ it follows from \cref{descs} that 
$\mathcal S_{\rm{odd}}(G) \subseteq \mathcal S(G)$ is contained in the $FC$-center of $G$, which coincides with the restricted direct product of the factors.
Moreover if $g=(g_n)_{n\in \mathbb N}\in \mathcal S_{\rm{odd}}(G)$, then $|g_n|$ is odd for every $n\in \mathbb N$ (otherwise $\mathcal S_{{\rm{odd}},_G}(g) =\varnothing).$ Thus 
$\mathcal{S}_{\rm{odd}}(G)$ is contained in the subset
$\Omega_{\rm{odd}}(G)$ consisting of the elements $(g_n)_{n\in \mathbb N}$
such that $|g_n|$ is odd for every $n\in \mathbb N$
and $g_n=1$ for all but finitely many $n\in \mathbb N.$ We claim that the equality $\mathcal{S}_{\rm{odd}}(G) = \Omega_{\rm{odd}}(G)$ holds. The group ${\rm{SL}}(2,2^n)$ has order $2^n(2^{2n}-1)$ and it contains precisely $2^{2n}-1$ nontrivial $2$-elements, %(see for example \cite{moller}) 
all of them have order $2$. Moreover, the order of an arbitrary element of ${\rm{SL}}(2,2^n)$ is either $2$ or odd. So the probability that an element of ${\rm{SL}}(2,2^n)$ has odd order is precisely $1-\frac{1}{2^n}.$
Then the probability that a randomly chosen element of $G$ has odd order (as a supernatural number) is
$$\eta:=\prod_{n \in \mathbb N}\left(1-\frac{1}{2^n}\right)>0.$$
It follows that the probability of randomly choosing an odd order element of $G$ is positive while $G$ is not virtually prosolvable.

\section{The class of finite nilpotent groups}\label{nilpot}

In this section we prove Theorems \ref{nilpp}, \ref{localnilp}, \ref{classp} and \ref{nofiniteorder}. If $G$ is a profinite group and $x\in G,$ we will denote by $\nil_G(x)$ the \emph{nilpotentizer} of $x$ in $G$, the set of elements $y \in G$ such that $\langle x,y \rangle$ is pronilpotent. Given a finite group $G$, a normal subgroup $N$ of $G$ and an element $x \in G$, the first thing we do is find a way to nicely bound $\mu_G(\mathcal N_G(x))$ from above in terms of $\mu_{G/N}(\mathcal N_{G/N}(xN))$.

\begin{lemma}\label{nilabcf}
	Let $G$ be a finite group, let $p$ be a prime and suppose that $N$ is an abelian minimal normal subgroup of $G$ whose order is a $p$-power. Let $x\in G$ and write
	$x=x_1x_2$ where $|x_1|$ is a $p$-power, $|x_2|$ is coprime with $p$ and $[x_1,x_2]=1.$
	Then
	$$\frac{|\nil_G(x)|}{|G|}\leq \frac{|\nil_{G/N}(xN)|}{|G/N|}\frac {|C_N(x_2)|}{|N|}.$$
\end{lemma}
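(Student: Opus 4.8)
The plan is to realise $\nil_G(x)$ as a union of fibres of the projection $\pi\colon G\to G/N$ and to bound each fibre. Since a quotient of a nilpotent group is nilpotent, $\langle x,z\rangle$ nilpotent forces $\langle xN,zN\rangle$ nilpotent, so $\nil_G(x)\subseteq \pi^{-1}(\nil_{G/N}(xN))$; in particular a coset $zN$ can meet $\nil_G(x)$ only if $zN\in\nil_{G/N}(xN)$. As $|G|=|G/N|\,|N|$, the asserted inequality is therefore equivalent to the fibrewise estimate
\[
|\nil_G(x)\cap zN|\le |C_N(x_2)|\qquad\text{for every }z\in G,
\]
for then summing over the at most $|\nil_{G/N}(xN)|$ relevant cosets and dividing by $|G|$ yields the claim. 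Thus everything reduces to showing that, within one coset, at most $|C_N(x_2)|$ of the translates $zn$ ($n\in N$) satisfy $\langle x,zn\rangle$ nilpotent.

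Next I extract what nilpotency provides. If $H=\langle x,w\rangle$ is nilpotent then $H=O_p(H)\times O_{p'}(H)$; since $x_1,x_2$ are powers of $x$ we get $x_1\in O_p(H)$ and $x_2\in O_{p'}(H)$, and as $N$ is a $p$-group we have $H\cap N\le O_p(H)$, whence $x_2$ centralizes $H\cap N$ and every commutator $[w,x_2]$ lies in $O_{p'}(H)$ and is a $p'$-element. The core of the fibre bound is then a short computation in the clean case where $[z,x_2]$ acts trivially on $N$ (e.g.\ when $[z,x_2]=1$): from the identity $[zn,x_2]=[z,x_2]^{\,n}[n,x_2]$ and $[z,x_2]^{\,n}=[z,x_2]$ one gets $[zn,x_2]=[z,x_2]\,[n,x_2]$, a product of two commuting factors, the first a $p'$-element and the second $[n,x_2]\in[N,x_2]$ a $p$-element. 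If $zn$ also lies in the fibre then $[zn,x_2]$ is a $p'$-element, so its $p$-part $[n,x_2]$ is trivial and $n\in C_N(x_2)$. Hence in this situation the fibre is contained in a single coset of $C_N(x_2)$, giving the bound.

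The main obstacle is to discharge the hypothesis that $[z,x_2]$ centralizes $N$: in general $[z,x_2]$ is a nontrivial $p'$-element of the nilpotent Hall $p'$-subgroup of $\langle x,z\rangle$ that may act nontrivially on $N$ (which is $G$-irreducible by minimality). I intend to handle this through the coprime action of $x_2$ on $N$, using the decomposition $N=C_N(x_2)\oplus[N,x_2]$ (so $C_N(x_2)\cap[N,x_2]=1$) together with the identity $[zn,x_2]=n^{-1}[z,x_2]\,n^{x_2}$. Since $[z,x_2]$ and $[zn,x_2]$ are $p'$-elements lying in the same coset of the normal $p$-subgroup $N$, Schur--Zassenhaus makes them $N$-conjugate, and comparing the two expressions produces a linear condition on $n$ that, via the $p$-part/$p'$-part separation above, should confine $n$ to a coset of $C_N(x_2)$; equivalently, the map $zn\mapsto n+[N,x_2]\in N/[N,x_2]\cong C_N(x_2)$ is injective on the fibre. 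Verifying this confinement is the delicate point and the place where irreducibility of $N$ and the coprimality of $x_2$ are used in earnest; once it is established, the fibre bound and hence the lemma follow by the summation in the first paragraph.
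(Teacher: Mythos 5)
Your first paragraph is correct and is exactly the paper's strategy: since nilpotency passes to quotients, everything reduces to the fibrewise bound $|\nil_G(x)\cap zN|\leq |C_N(x_2)|$. Your second paragraph correctly settles the special case in which $[z,x_2]$ centralizes $N$. But the general case, which is the entire content of the lemma, is not proved: you yourself write that the linear condition ``should confine'' $n$ and that verifying this ``is the delicate point.'' That is a gap, not a proof, and moreover the gap cannot be closed along the route you sketch, because the constraints you extract from nilpotency of $\langle x,zn\rangle$ are demonstrably too weak.

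Concretely, your sketch uses only that $[zn,x_2]=n^{-1}[z,x_2]n^{x_2}$ is a $p'$-element, $N$-conjugate to $c:=[z,x_2]$. Writing $N$ additively, the $p'$-elements of the coset $cN$ are exactly $c\cdot[N,c]$, so this condition says precisely that $n(x_2-1)\in[N,c]$; its solution set is a subgroup of order $|C_N(x_2)|\cdot|[N,x_2]\cap[N,c]|$, which exceeds $|C_N(x_2)|$ whenever $[N,x_2]\cap[N,c]\neq 0$. This really happens: take $p=3$, $N=\mathbb{F}_3^2$, $G=N\rtimes Q_8$ with $Q_8=\langle i,j\rangle\leq {\rm{SL}}(2,3)$ acting irreducibly (and fixed-point-freely), $x=x_2=i$, $z=j\in\nil_G(i)$. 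Then $c=[j,i]=-1$ acts as $-\mathrm{id}$, so $[N,c]=N$ and your condition holds for \emph{every} $n\in N$ (indeed $[jn,i]$ has trivial square, hence is a $2$-element, for all $n$); yet $\nil_G(i)\cap jN=\{j\}$, of size $|C_N(i)|=1$, because a nilpotent $\langle i,jn\rangle$ must meet $N$ trivially by minimality of $N$, hence be a complement $Q_8^m$ with $m\in N$, and $i\in Q_8^m$ forces $m\in C_N(i)=0$, so $n=0$. So the commutator of $zn$ with $x_2$ alone cannot see the bound. What is missing---and what the paper does---is to decompose the fibre element itself into commuting $p$- and $p'$-parts $y_1n_1$ and $y_2n_2$, apply Hall conjugacy to the Hall $p'$-subgroups $\langle x_2,y_2\rangle$ and $\langle x_2,y_2n_2\rangle$ of $\langle x_2,y_2\rangle N$ (which pins $y_2n_2$ into the $C_N(x_2)$-conjugacy orbit of $y_2$), use nilpotency of $\langle x_2,y_1n_1\rangle$ to force $n_1\in C_N(x_2)$, and then use the relation $[y_1,n_2]=[y_2,n_1]$, coming from $[y_1n_1,y_2n_2]=1$, to allow at most $|C_N(x_2)\cap C_N(y_2)|$ values of $n_1$ for each admissible $n_2$. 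The bound then comes from the product $\frac{|C_N(x_2)|}{|C_N(x_2)\cap C_N(y_2)|}\cdot|C_N(x_2)\cap C_N(y_2)|=|C_N(x_2)|$, i.e.\ a two-step orbit--stabilizer count; note also that coset confinement and injectivity modulo $[N,x_2]$ are not equivalent (the first implies the second, not conversely), and the paper establishes neither---only the cardinality bound---so even the target of your proposed verification is likely stronger than what is true.
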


\begin{proof}
   Given $y\in \nil_G(x)$ we want to estimate the size of $\Delta:=\nil_G(x)\cap Ny.$ We write $x=x_1x_2$ and $y=y_1y_2$, where $|x_1|$, $|y_1|$ are $p$-powers, $|x_2|, |y_2|$ are coprime with $p$ and $[x_1,x_2]=[y_1,y_2]=1.$ Since $\langle x,y
	\rangle$ is nilpotent, $[x_2,y_1]=[x_1,y_2]=1.$  Now assume $z\in \Delta.$ Then there exist $n_1, n_2\in N$ such that $z=y_1n_1y_2n_2,$ where $|y_1n_1|$ is a $p$-power, $|y_2n_2|$ is coprime with $p$ and $[y_1n_1,y_2n_2]=1.$
	First notice that $H = \langle x_2,y_2 \rangle$ and $K = \langle x_2, y_2n_2\rangle$ are Hall $p'$-subgroups of the solvable group $\langle x_2,y_2 \rangle N$, so by Hall's theorem there exists $n \in N$ such that $H^n = K$. Since $N \cap K = \{1\}$, if $h \in H$ then $|hN \cap K| \leq 1$, moreover $h^n \in hN$, so ${x_2}^n = x_2$ and ${y_2}^n = y_2n_2$. In other words, $y_2n_2 = {y_2}^{n}$ for some
	$n\in C_N(x_2).$ Thus the number of possible choices for
	$y_2n_2$ is at most $|C_N(x_2)|/|C_N(x_2)\cap C_N(y_2)|.$ Moreover
	$\langle x_2, y_1n_1\rangle$ nilpotent implies that $n_1\in C_N(x_2).$ Finally, using the commutator equalities $[ab,c]=[a,c]^b [b,c]$ and $[c,ab]=[c,b][c,a]^b$, since $N$ is abelian we have 
    \begin{align*}
        1 & = [y_1n_1,y_2n_2] = [y_1,n_2]^{n_1} [y_1,y_2]^{n_2n_1} [n_1,n_2] [n_1,y_2]^{n_2} = [y_1,n_2][n_1,y_2],
    \end{align*}
    thus $[y_1,n_2]=[y_2,n_1].$ Once $n_2$ is fixed, we count how many $n_1$ there are in $C_N(x_2)$ with $[y_1,n_2]=[y_2,n_1].$ It is possible that there is no $n_1$ satisfying this equation. In any case assume that 
	$[y_2,n_1]=[y_2,\tilde n_1]$, where $n_1$ and $\tilde n_1$ are two different elements of $C_N(x_2).$ Then $n_1\tilde n_1^{-1}\in C_N(y_2)\cap C_N(x_2)$, so, given $n_2$ there are at most $|C_N(x_2)\cap C_N(y_2)|$ possible choices for $n_1$. We conclude
	that the number of possibilities for the pair $(n_1,n_2)$
	is at most
	$$\frac{|C_N(x_2)|}{|C_N(x_2)\cap C_N(y_2)|}\cdot |C_N(x_2)\cap C_N(y_2)|=|C_N(x_2)|.
	$$
	Hence $|\Delta|\leq |C_N(x_2)|$. This implies the result.
\end{proof}

In the next proofs we will use the following notation. Given a profinite group $G$ and an element $g \in G$, for every prime $p$, $g=g_pg_{p^\prime}$ with $g_p$ a $p$-element, $g_{p^\prime}$ a $p^\prime$-element and $[g_p,g_{p^\prime}]=1.$ In particular, $g_p$ and $g_{p'}$ belong to $\langle g \rangle$. For every $g\in G$ and every open normal subgroup $N$ of $G$ we define the following non-negative integers:
\begin{enumerate}
	\item $\tau_{nonab}(g,N)$ is the number of nonabelian factors $X/Y$ in a chief series of $G/N$ such that $[X,g]\not\leq Y.$
	\item $\tau_{ab,p}(g,N)$ is the number of abelian factors $X/Y$ in a chief series of $G/N$ of order a $p$-power for some prime $p$ and such that $[X,g_{p^\prime}]\not\leq Y.$
	\item $\tau(g,N)=\tau_{nonab}(g,N)+\sum_p\tau_{ab,p}(g,N)$.
\end{enumerate}
	
	\begin{cor}\label{cencf}
		Let $G$ be a profinite group and let $g \in G$. If $\mu(\nil_G(g))>0,$ then there exists $u\in \mathbb N$ such that $\tau(g,N)\leq u$ for every open normal subgroup $N$ of $G.$
	\end{cor}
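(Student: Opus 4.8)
The plan is to bound separately the several kinds of contributions to $\tau(g,N)$, namely the nonabelian term $\tau_{nonab}(g,N)$ and, for each prime $p$, the abelian term $\tau_{ab,p}(g,N)$, obtaining for each a bound that does not depend on $N$. Throughout I would use the fundamental inequality $\mu(\nil_G(g)) \le |\nil_{G/N}(gN)|/|G/N|$ coming from the measure formula recalled in the introduction (applied with $\mathcal F=\mathcal N$), so that any upper bound on the right-hand ratio yields information valid for all $N$ simultaneously.

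For the nonabelian part I would exploit the solvable theory already developed. Since a pronilpotent group is prosolvable, $\nil_G(g)\subseteq \mathcal S_G(g)$, hence $\mu(\mathcal S_G(g))\ge \mu(\nil_G(g))>0$ and therefore $g\in \mathcal S(G)$. By the characterization $\mathcal S(G)=\bigcup_{n\in\mathbb N}\Sigma_n(G)$ recalled in Section \ref{solv}, there is an $n$ with $g\in\Sigma_n(G)$, which says precisely that $g$ centralizes all but at most $n$ nonabelian factors of a chief series of $G/N$, for every open normal $N$. In other words $\tau_{nonab}(g,N)\le n$ for all $N$, with $n$ independent of $N$.

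For the abelian part I would telescope along a chief series. Fix $N$ and a chief series $1=K_0\lhd K_1\lhd\dots\lhd K_k=G/N$ of $G/N$, and process it from the bottom, at each stage factoring out the current minimal normal subgroup $M=K_i/K_{i-1}$ of the current quotient $\bar G=G/K_{i-1}$. If $M$ is abelian of $p$-power order, Lemma \ref{nilabcf} applied in $\bar G$ contributes the multiplicative factor $|C_M(g_{p'})|/|M|$, where I use that the $p'$-part of $g K_{i-1}$ equals $g_{p'}K_{i-1}$; by coprimality $M=C_M(g_{p'})\times[M,g_{p'}]$, so this factor is $1$ when $g_{p'}$ centralizes $M$ and is at most $1/p\le 1/2$ exactly when $[K_i,g_{p'}]\not\le K_{i-1}$, i.e. exactly for the factors counted by $\tau_{ab,p}$. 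If $M$ is nonabelian (or abelian but centralized), I only use the trivial factor $|\nil_{\bar G}(\bar g)|/|\bar G|\le |\nil_{\bar G/M}(\bar g M)|/|\bar G/M|$, which holds for any normal $M$ because nilpotence of $\langle\bar g,\bar y\rangle$ passes to the quotient. Multiplying all factors gives
\[
\mu(\nil_G(g))\le \frac{|\nil_{G/N}(gN)|}{|G/N|}\le \left(\tfrac12\right)^{\sum_p \tau_{ab,p}(g,N)},
\]
whence $\sum_p\tau_{ab,p}(g,N)\le \log_2\bigl(1/\mu(\nil_G(g))\bigr)$, again independent of $N$. Setting $u:=n+\log_2\bigl(1/\mu(\nil_G(g))\bigr)$ then finishes the proof.

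The computations are routine once the setup is correct; the delicate point is the bookkeeping in the telescoping. Concretely, one must verify that the factors to which Lemma \ref{nilabcf} contributes a genuine saving are exactly those enumerated by $\tau_{ab,p}$ — which hinges on the identity $(gK_{i-1})_{p'}=g_{p'}K_{i-1}$ and on the fact that a nontrivial coprime action on an elementary abelian $p$-group has centralizer of index at least $p$ — and that the nonabelian factors are controlled uniformly through the solvable $\Sigma_n$ input rather than through any per-factor measure estimate. I expect this interface between the nilpotent telescoping and the solvable characterization to be the only genuine obstacle, the rest being a direct product-over-chief-factors estimate.
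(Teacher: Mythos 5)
Your proposal is correct and follows essentially the same route as the paper: the nonabelian part is controlled via $\nil_G(g)\subseteq\mathcal S_G(g)$ and the solvabilizer characterization, while the abelian part is controlled by telescoping Lemma \ref{nilabcf} along a chief series of $G/N$ and observing that each non-centralized abelian $p$-chief factor costs a multiplicative factor of at most $1/2$, giving $\tau(g,N)\leq C+\log_2(1/\mu(\nil_G(g)))$. Your write-up is merely more explicit about the bookkeeping (the identity $(gK_{i-1})_{p'}=g_{p'}K_{i-1}$, the coprime-action decomposition, and the trivial quotient inequality for the remaining factors) than the paper's terser argument, but the mathematical content is identical.
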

	
\begin{proof}
If $\mu(\nil_G(g))>0,$ then $\mu(\mathcal S_G(g))>0$ so by \cite[Corollary 1.3]{fgg} $g$ centralizes almost all the nonabelian chief factors of $G$, in other words there exists a constant $C$ such that $\tau_{nonab}(g,N) \leq C$ for every $N \unlhd_{\circ} G$. Moreover it follows from Lemma \ref{nilabcf} that, for every prime $p$, $g_{p^\prime}$ centralizes almost all the abelian chief factors of $G$ whose order is a power of $p$. Explicitly, if $N \unlhd_{\circ} G$ and $N = N_k \unlhd \cdots \unlhd  N_1 = G$ is a chief series of $G/N$, then, whenever $N_i/N_{i+1}$ is abelian, set $c_i := |C_{N_i/N_{i+1}}(g_{p^\prime})|/|N_i/N_{i+1}|$, being $p$ the unique prime divisor of $|N_i/N_{i+1}|.$
Note that, if $c_i < 1$, then $c_i \leq 1/2$, so $0 < \varepsilon = \mu(\nil_G(g)) \leq (1/2)^t$ where $t=\sum_p\tau_{ab,p}(g,N).$ It follows that $$\tau(g,N) = \tau_{nonab}(g,N) + t \leq C+\log_2(1/\varepsilon).\qedhere$$
\end{proof}

We need the following revised version of \cite[Theorem 25]{pel}.

\begin{thm}\label{thm_dh} Let $G$ be a finite group and $p$ a prime divisor of $G$. Then a $p$-element $g$ of $G$ centralizes all the abelian $p'$-chief factors and all the nonabelian chief factors of $G$ if and only if $g \in O_p(G).$
\end{thm}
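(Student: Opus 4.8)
The plan is to prove the two implications separately, disposing of the easy direction directly and establishing the substantial one by induction on $|G|$. For the direction $g \in \op(G) \Rightarrow g$ centralizes the relevant factors, I would in fact prove the stronger statement that the whole subgroup $\op(G)$ centralizes every nonabelian chief factor and every abelian $p'$-chief factor. Fixing such a chief factor $X/Y$, I would pass to $\bar G = G/Y$ and write $V = X/Y$ for the corresponding minimal normal subgroup and $\bar P$ for the image of $\op(G)$, a normal $p$-subgroup of $\bar G$. Then $V \cap \bar P$ is a normal $p$-subgroup of $V$, and since $V$ is either a direct product of nonabelian simple groups or an abelian $p'$-group, it has no nontrivial normal $p$-subgroup; hence $V \cap \bar P = 1$. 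Two normal subgroups meeting trivially commute, so $[V, \bar P] = 1$ and in particular $[X, g] \le Y$, as wanted.

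For the converse I would induct on $|G|$, the case $|G|=1$ being trivial. First I would record that the property ``$g$ centralizes all nonabelian chief factors and all abelian $p'$-chief factors'' is independent of the chosen chief series, by the Jordan--Hölder theorem for groups with operators, so that I may pick convenient series freely. Choosing a minimal normal subgroup $N$ of $G$, I would split into cases. If $N$ is an elementary abelian $p$-group, then $N \le \op(G)$ and $\op(G/N) = \op(G)/N$; the hypotheses pass to $(G/N, gN)$ because the chief factors of $G/N$ are exactly the chief factors of $G$ lying above $N$, so induction gives $gN \in \op(G/N) = \op(G)/N$ and hence $g \in \op(G)$.

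The remaining and genuinely harder case is when $N$ is either an abelian $p'$-group or nonabelian. Here $N = N/1$ is itself one of the factors $g$ centralizes, so $[g,N]=1$, and induction applied to $(G/N, gN)$ again yields $gN \in \op(G/N)$; thus $g$ lies in the normal subgroup $M \nn G$ with $M/N = \op(G/N)$, where $M/N$ is a $p$-group. The point is that $\op$ does \emph{not} pull back through a quotient by a $p'$ or nonabelian normal subgroup, so I cannot conclude directly. Instead I would analyze $\C_M(N) = M \cap \C_G(N)$, which is normal in $G$. When $N$ is nonabelian it has trivial center, so $\C_M(N)$ injects into the $p$-group $M/N$ and is therefore a normal $p$-subgroup of $G$ containing $g$, giving $g \in \C_M(N) \le \op(G)$. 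When $N$ is an abelian $p'$-group, $N$ is central in $\C_M(N)$ while $\C_M(N)/N$ is a $p$-group, so Schur--Zassenhaus yields $\C_M(N) = N \x P_0$ with $P_0 = \op(\C_M(N))$ normal in $G$; since $g$ is a $p$-element of $\C_M(N)$, it lies in $P_0 \le \op(G)$. In both subcases $g \in \op(G)$, completing the induction.

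I expect the main obstacle to be exactly this last case: correctly identifying the normal $p$-subgroup $\C_M(N)$ (or its Sylow $p$-subgroup) inside which $g$ is forced to sit, and verifying both that it is normal in $G$ and that it is contained in $\op(G)$. The two ingredients that make this work are that $g$ is forced to centralize $N$ and that $M/N$ is a $p$-group, together with the elementary principle that a normal $p$-subgroup interacts trivially with a coprime or centreless normal subgroup.
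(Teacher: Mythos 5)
Your proof is correct, and it takes a genuinely different route from the paper's. The paper deduces both directions from the Doerk--Hawkes machinery \cite{dh}: the implication $(\Leftarrow)$ from the fact that the Fitting subgroup centralizes all chief factors, and the implication $(\Rightarrow)$ from the identification of the intersection of the centralizers of the chief factors of order divisible by $q$ with $O_{q',q}(G)$; setting $C=\cap_{q\neq p}O_{q',q}(G)$, the paper shows that $C$ is $q$-nilpotent for every $q\neq p$ and recovers $O_p(G)$ as the intersection of the normal $q$-complements of $C$, a normal Sylow $p$-subgroup of $C$ containing $g$. Your argument is instead a self-contained induction on $|G|$ over a minimal normal subgroup $N$: the Jordan--H\"older remark makes the hypothesis series-independent; the case where $N$ is a $p$-group is handled by the exactness of $O_p$ under quotients by normal $p$-subgroups; and the essential case ($N$ abelian $p'$ or nonabelian) is resolved by your analysis of $C_M(N)$, with $M$ the preimage of $O_p(G/N)$ --- a normal $p$-subgroup of $G$ when $Z(N)=1$, and of the form $N\times O_p(C_M(N))$ by Schur--Zassenhaus when $N$ is an abelian $p'$-group; in both subcases $g$ lands in a normal $p$-subgroup of $G$. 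Both proofs are valid: the paper's is shorter and situates the statement inside standard formation theory at the cost of three citations to \cite{dh}, while yours is elementary, and makes visible exactly where the hypotheses enter (you need $g$ to centralize $N$ itself, and you must work around the failure of $O_p$ to pull back along $p'$ or nonabelian quotients, which is precisely what the $C_M(N)$ analysis overcomes). One small point to tidy up: in the inductive step $p$ need not divide $|G/N|$, so either state the result for all primes (it is trivial when $p\nmid |G|$) or observe that in that degenerate case $gN=N$, and then $g\in N$ together with $[g,N]=1$ forces $g=1$.
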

\begin{proof}
Since the Fitting subgroup of $G$ is generated by the subgroups $O_p(G)$ where $p$ is a prime dividing $|G|$, the implication $(\Leftarrow)$ follows from \cite[Theorem 13.8(b)]{dh}. Suppose that a $p$-element $g$ of $G$ centralizes all the abelian $p'$-chief factors and all the nonabelian chief factors of $G$. This is equivalent to saying that $g$ centralizes all the chief factors of $G$ whose order is not a $p$-power.
	Let $q$ be a prime divisor of $|G|$ and let $C_q(G)$ be the intersection of the centralizers of the chief factors of $G$ whose order is divisible by $q
	.$ By \cite[Theorem 13.8(a)]{dh}, we have that $C_q(G)=O_{q^\prime,q}(G)$, where $O_q(G/O_{q^\prime}(G)) = O_{q^\prime,q}(G)/O_{q^\prime}(G)$. So it is sufficient to prove that a $p$-Sylow subgroup $P$ of $C=\cap_{q\neq p}O_{q^\prime,q}(G)$ is normal in $C$ and coincides with $O_p(G).$
	Since $O_{q^\prime,q}(G)$ is $q$-nilpotent, it follows from \cite[Theorem 13.4(a)]{dh}
	that $C$ is $q$-nilpotent for every $q\neq p.$ If $K_q$ is the normal $q$-complement in $C$, then $P=\cap_{q\neq p}K_q$.  Since $C$ is a normal subgroup of $G$, it follows that $P\leq O_p(G).$ On the other hand $O_p(G)\leq O_{q^\prime,q}(G)$ for every $q\neq p,$ so $O_p(G)\leq P.$
\end{proof}

So arguing as in \cite[Theorem 27]{pel}, we obtain the following.

% \begin{thm}
% 	Let $G$ be a profinite group and let $g$ be an element of $G$. %Suppose either that $p$ is odd or that $G$ is virtually prosolvable. 
% 	If $\mu(\nil_G(g))>0,$ then there is an open subgroup $H$ of $G$ such that  $g \in \fit(H).$ In particular, $g$ has finite order modulo $\fit(G).$ 
% \end{thm}
\begin{proof}[Proof of \cref{nilpp}]
It follows from \cref{cencf} that $G$ contains an open normal subgroup $M$ such that, for every prime $p$, $g_p$  centralizes all the chief factors of $M$ that are either nonabelian or have order coprime to $p.$ Hence, by \cref{thm_dh}, $g_p\in O_p(M\langle g\rangle)$. This is because, if $H$ is a closed subgroup of $G$, $P$ varies in the family of $p$-Sylow subgroups of $H$ and $N$ varies in the family of the open normal subgroups of $H$, then $O_p(H) = \cap_P P = \cap_P \cap_N PN = \cap_N \cap_P PN$ and $\cap_P PN$ is the preimage of $O_p(HN/N)$ in $H$. This implies that $g\in \fit(M\langle g\rangle).$
Moreover, $\langle g\rangle \cap M\leq \fit(M \langle g \rangle) \cap M = \fit(M)\leq \fit(G)$ so the order of $g$ modulo $\fit(G)$ is at most $|G/M|.$    
\end{proof}

In the proof of the next lemma we will use the following definition: if $K$ is a closed normal subgroup of $G$ and $p$ is a prime, we will denote by $\Delta_{G,p}(K)$ the set of the elements $g\in G$ with the following property: if $Y$ is an open normal subgroup of $K$, $X/Y$ is a chief factor of $K/Y$
and either $X/Y$ is nonabelian or has order coprime with $p$, then $[g,X]\leq Y.$

Recall that a profinite group is countably based if and only if it admits a countable family of open normal subgroups with trivial intersection.

\begin{lemma}\label{nilpel}
	Let $G$ be a countably based profinite group and let $P$ be a $p$-Sylow subgroup of $G.$  Suppose that $\mu(\nil_G(g))>0$ for every $g\in P.$
    Then the following hold:
    \begin{enumerate}
    \item $N_G(P)$ is an open subgroup of $G.$
    \item $P/O_p(G)$ is finite.
    \end{enumerate}
\end{lemma}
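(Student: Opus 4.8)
The plan is to upgrade the pointwise hypothesis to a uniform estimate on an open subgroup by a Baire category argument (this is where countable basedness enters: it makes $P$ a compact metrizable, hence Baire, space), to feed the resulting quantitative control on chief factors into \cref{thm_dh} for part (2), and to combine (2) with \cref{nilpp} for part (1).

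\textbf{Uniformization.} For $g\in P$ and $N\unlhd_o G$ put $\rho(g,N)=\prod_W |C_W(g)|/|W|$, the product over the abelian non-$p$ chief factors $W$ of a chief series of $G/N$. Iterating \cref{nilabcf} (the abelian $p$-power factors contribute $1$ since the $p'$-part of a $p$-element is trivial, and the remaining factors are absorbed by the general monotonicity $\mu(\nil_G(g))\le\mu(\nil_{G/N}(gN))$) gives $\mu(\nil_G(g))\le\rho(g,N)$, so $\rho(g):=\inf_N\rho(g,N)\ge\mu(\nil_G(g))>0$. Because $|C_W(gh)|\ge|C_W(g)\cap C_W(h)|\ge|C_W(g)|\,|C_W(h)|/|W|$ in an abelian $W$, the function $\rho(\cdot,N)$ is supermultiplicative, so $D(g):=-\log\rho(g)$ is subadditive, $D(g^{-1})=D(g)$, and $D(g)<\infty$ for every $g\in P$. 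Writing $P=\bigcup_k D_k$ with $D_k=\{D\le k\}$ closed and $D_kD_\ell\subseteq D_{k+\ell}$, Baire's theorem gives an open subgroup $Q_1\le P$ contained in some $D_{2k_0}$; likewise \cref{cencf} together with the subadditivity of $\tau(\cdot,N)$ yields an open subgroup $Q_2\le P$ on which $\tau(\cdot,N)$ is bounded. Set $Q=Q_1\cap Q_2$. Then there are $\delta_0>0$ and $c_0$ with $|C_W(q)|/|W|\ge\delta_0$ for every abelian non-$p$ chief factor $W$ of every $G/N$ and every $q\in Q$, while each such $q$ fails to centralize at most $c_0$ non-$p$ chief factors.

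\textbf{Proof of (2), abelian part.} Since $\op(G)=\{g\mid gN\in\op(G/N)\ \forall N\}$, the quotient $Q/(Q\cap\op(G))$ is the inverse limit of the groups $\bar Q/(\bar Q\cap\op(\bar G))$, $\bar G=G/N$; as an inverse limit of finite groups of bounded order is finite, it suffices to bound these uniformly. By \cref{thm_dh}, $\bar Q\cap\op(\bar G)$ is exactly the set of $q\in\bar Q$ centralizing every non-$p$ chief factor, so $\bar Q/(\bar Q\cap\op(\bar G))$ embeds into the product of the images of $\bar Q$ in $\aut(W)$ over the non-$p$ chief factors $W$; by the previous step at most $c_0$ of these images are nontrivial, so I only need to bound each. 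For an abelian $W=\mathbb F_\ell^n$ the estimate $|C_W(q)|/|W|\ge\delta_0$ says $q-1$ has corank $\le\log_2(1/\delta_0)$ on $W$; if $W$ is moved at all then $\ell\le1/\delta_0$, and the standard fact that a $p$-group of uniformly bounded corank on $\mathbb F_\ell^n$ has bounded $\dim[W,\bar Q]$ bounds $|\bar Q/C_{\bar Q}(W)|$ in terms of $\delta_0$ alone. Thus the abelian factors contribute a bounded amount.

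\textbf{The main obstacle.} What remains, and is the hardest point, is to bound the image of $\bar Q$ in $\aut(W)$ for the boundedly many nonabelian chief factors $W=S^k$ that $\bar Q$ does not centralize. The count is free from \cref{cencf}, but the \emph{size} of the action is not formal: I expect it to require a fixed-point-ratio estimate showing that when $q$ induces a nontrivial $p$-automorphism of $S^k$ the proportion of $h$ with $\langle q,h\rangle$ pronilpotent decays as $|S|$ and $k$ grow, so that $\mu(\nil_G(q))\ge\delta_0$ forces $|S|$, $k$, and the induced automorphism group to be bounded (a CFSG-type input, in the spirit of Fulman--Guralnick and Liebeck--Shalev). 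Granting this, each nonabelian factor also contributes boundedly, so $|\bar Q/(\bar Q\cap\op(\bar G))|$ is bounded uniformly in $N$, whence $Q/(Q\cap\op(G))$ is finite; since $Q$ has finite index in $P$, so is $P/\op(G)$, proving (2).

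\textbf{Proof of (1).} Using (2) I reduce to $\op(G)=1$: the hypothesis descends to $G/\op(G)$ because $\mu(\nil_{G/\op(G)}(\bar g))\ge\mu(\nil_G(g))$, and there the Sylow pro-$p$ subgroup $P/\op(G)$ is finite by (2). So assume $\op(G)=1$ and $P$ finite. For each $g\in P$, \cref{nilpp} gives an open subgroup $H_g\le G$ with $g\in\fit(H_g)$; as $g$ is a $p$-element, $g\in\op(H_g)$, a normal pro-$p$ subgroup of $H_g$ lying in a conjugate of $P$, hence finite. Therefore $C_{H_g}(\op(H_g))$ has finite index in the open subgroup $H_g$ and centralizes $g$, so $C_G(g)$ is open. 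Finally $C_G(P)=\bigcap_{g\in P}C_G(g)$ is a finite intersection of open subgroups, hence open, and $N_G(P)\supseteq C_G(P)$ is open, which gives (1).
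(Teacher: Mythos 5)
Your Baire uniformization is sound (supermultiplicativity of $\rho$, subadditivity of $D$ and of $\tau$, passing from a coset $gU$ inside a closed set to an open subgroup), and your deduction of (1) from (2) via \cref{nilpp} is correct: with $O_p(G)=1$ and $P$ finite, $g\in\fit(H_g)$ forces $g\in O_p(H_g)$, which is finite, so $C_G(g)\supseteq C_{H_g}(O_p(H_g))$ is open and $C_G(P)$ is open. But part (2) has a genuine gap, which you yourself flag: the nonabelian chief factors. You ``grant'' a CFSG-type fixed-point-ratio estimate asserting that a nontrivial $p$-action of $\bar Q$ on $S^k$ together with $\mu(\nil_G(q))\geq\delta_0$ bounds $|S|$, $k$ and the induced group of automorphisms; this is nowhere proved, and moreover the hypothesis you would feed into it was never established: your Baire step produces the uniform bounds $\rho(q)\geq\delta_0$ (abelian non-$p$ factors only) and $\tau(q,\cdot)\leq c_0$ on $Q$, \emph{not} a uniform lower bound on $\mu(\nil_G(q))$ itself, and you cannot get the latter by the same coset-to-subgroup trick because $\mu(\nil_G(\cdot))$ has no supermultiplicativity (nilpotency of $\langle g,x\rangle$ and $\langle h,x\rangle$ says nothing about $\langle gh,x\rangle$). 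There is also a smaller slip: ``at most $c_0$ of these images are nontrivial'' uses the per-element bound $\tau(q,N)\leq c_0$ as if it were a bound for the whole group $\bar Q$; different elements may fail on different factors. That one is repairable by averaging: summing $|\bar Q\setminus C_{\bar Q}(W)|\geq(1-1/p)|\bar Q|$ over the factors moved by $\bar Q$ against $\sum_{q\in\bar Q}\tau(q,N)\leq c_0|\bar Q|$ gives at most $c_0p/(p-1)$ such factors. The CFSG step, however, is a missing idea, and since (1) rests on (2), neither part is established.

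The paper's proof shows that the entire difficulty you ran into can be bypassed, and this is precisely where the countably based hypothesis earns its keep. Fix a chief chain $G=N_0>N_1>\cdots$ with $\bigcap_iN_i=1$, let $\Delta$ index the factors $N_i/N_{i+1}$ that are nonabelian or abelian of order coprime to $p$, and for each \emph{cofinite} $\lambda\subseteq\Delta$ set $C_\lambda=\bigcap_{i\in\lambda}C_P(N_i/N_{i+1})$. By \cref{cencf}, $P=\bigcup_\lambda C_\lambda$, a \emph{countable} union of closed subgroups, so Baire produces a single cofinite $\lambda$ and an open subgroup $M\leq P$ with $M\subseteq C_\lambda$ (a coset inside a subgroup yields the subgroup). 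Choosing $j$ with $\{i\in\Delta : i\geq j\}\subseteq\lambda$ and $N=N_j$, every element of $M$ centralizes all nonabelian and $p'$-abelian chief factors of $N$, whence $M\cap N\leq O_p(N)\leq O_p(G)$ by \cref{thm_dh}; then $M/(M\cap O_p(G))$ is a quotient of the finite group $M/(M\cap N)\cong MN/N$, so $P/O_p(G)$ is finite. In other words, Baire is applied not to uniform numerical bounds but to the common cofinite set of centralized factors, so that all the exceptional factors of all elements of $M$ are confined to the finite quotient $G/N$, and no estimate on the size of any induced action---abelian or nonabelian---is ever needed. (The paper then gets (1) by covering $P$ with finitely many cosets $g_iO_p(G)$, intersecting the corresponding open normal subgroups to get $L$ with $P=O_p(PL)$, again via \cref{thm_dh}; your route to (1) through \cref{nilpp} is a genuinely different and valid alternative, but only once (2) is in hand.)
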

\begin{proof}
Since $G$ is countably based, there exists a descending chain  $\{N_i\}_{i\in \mathbb N}$ of open normal subgroups of $G$ such that $N_0=G,$  $\cap_{i\in \mathbb N}N_i=1$ and $N_i/N_{i+1}$ is a chief factor of $G/N_{i+1}$ for every $i\in \mathbb N.$    
Let $\Delta$ be the set of the natural numbers $u \in \mathbb{N}$ such that $N_{u}/N_{u+1}$ is a $p'$-chief factor or a nonabelian chief factor and let $\Lambda$ be the set of cofinite subsets of $\Delta$. For every $\lambda \in \Lambda$, we have that $C_\lambda := \cap_{i \in \lambda} C_P(N_i/N_{i+1})$ is a closed subgroup of $P$ and, by \cref{cencf}, it follows that $P= \cup_\lambda C_\lambda$. By the Baire Category Theorem, there exist $\lambda \in \Lambda$, an open normal subgroup $M$ of $P$ and $g \in P$ such that $gM \subseteq C_\lambda$. Since $C_\lambda$ is a subgroup of $G,$ this implies that $M$ itself is contained in $C_\lambda$. In particular, we can find an open normal subgroup $N$ of $G$ such that $M \leq \Delta_{G,p}(N)$ and therefore $M\cap N\leq O_p(N)\leq O_p(G).$ Hence $MO_p(G)/O_p(G)\cong M/(M\cap O_p(G))$ is an epimorphic image of 
$M/(M \cap N) \cong MN/N$ and therefore is finite, so $P/O_p(G)$ is finite too, which proves item (2). Write $P/O_p(G)=\{g_1O_p(G), \ldots, g_tO_p(G)\}$. By  \cref{cencf}, for each $1\leq i\leq t$, there exists an open normal subgroup $N_i$ of $G$ such that $g_i \in \Delta_{G,p}(N_i)$. Let $L = N \cap N_1 \cap \dots \cap N_t.$ It follows from \cref{thm_dh} that $P \leq O_p(PL)$ and, since $P$ is a Sylow subgroup of $G$, equality holds. Thus, $N_G(P)$ contains $L$ and consequently $N_G(P)$ is open in $G.$ This proves item (1).
\end{proof}

\begin{lemma}\label{intersecochiusi}Let $H$ and $K$ be closed subgroups of a profinite group $G$. If $H\cap K$ has infinite index in $H$ then there exists a closed normal subgroup $N$ of $G$ such that
\begin{enumerate}
\item $G/N$ is countably based;
\item $HN \cap KN$ has infinite index in $HN.$
\end{enumerate}
\end{lemma}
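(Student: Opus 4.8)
The plan is to use that, for closed subgroups of a profinite group, having infinite index is the same as failing to be open, and to realise the non-openness of $H\cap K$ in $H$ using only countably many open normal subgroups of $G$. Writing $\pi\colon G\to G/N$ for the projection, I would first record the index identity
$$[HN:HN\cap KN]=[\pi(H):\pi(H)\cap\pi(K)]=[H:H\cap KN].$$
Here $\pi(H)=HN/N$, $\pi(K)=KN/N$ and $\pi(H)\cap\pi(K)=(HN\cap KN)/N$, while the surjection $h\mapsto\pi(h)\,(\pi(H)\cap\pi(K))$ from $H$ onto $\pi(H)/(\pi(H)\cap\pi(K))$ identifies its coset space with $H/(H\cap KN)$, since $\pi(h)\in\pi(K)$ exactly when $h\in KN$. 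Consequently item (2) is equivalent to $H\cap KN$ being non-open in $H$, and the whole task reduces to producing a closed normal subgroup $N$ that is an intersection of countably many open normal subgroups of $G$ (so that $G/N$ is countably based, giving item (1)) and for which $H\cap KN$ is non-open in $H$.

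Next I would establish the key quantitative fact: as $V$ ranges over the open normal subgroups of $G$, the finite indices $[H:H\cap KV]$ are unbounded. Each $KV$ is open, so $H\cap KV$ is open in $H$, and $\bigcap_V(H\cap KV)=H\cap\bigcap_V KV=H\cap K$. If all these indices were bounded by some integer $M$, then, using that the family $\{H\cap KV\}_V$ is directed downward (through $V\cap V'$) and that passing to a smaller $V$ can only enlarge the index, the maximal index value would be attained at some $V_0$; comparing an arbitrary $V$ with $V_0$ via $V\cap V_0$ would then force $H\cap KV_0\subseteq H\cap KV$ for every $V$, whence $H\cap KV_0=\bigcap_V(H\cap KV)=H\cap K$ would be open, contradicting the hypothesis that $H\cap K$ has infinite index in $H$.

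Finally, using this unboundedness, for each $n$ I would pick $V^{(n)}\unlhd_o G$ with $[H:H\cap KV^{(n)}]\ge n$, set $V_n=V^{(1)}\cap\cdots\cap V^{(n)}$ and $N=\bigcap_{n}V_n$. Then $N$ is an intersection of countably many open normal subgroups, so $G/N$ is countably based. Since $V_n\le V^{(n)}$ we get $[H:H\cap KV_n]\ge[H:H\cap KV^{(n)}]\ge n$, and since $N\le V_n$ we have $KN\le KV_n$, hence $H\cap KN\le H\cap KV_n$ and so $[H:H\cap KV_n]$ divides $[H:H\cap KN]$. Thus $[H:H\cap KN]$ is divisible by arbitrarily large integers and is therefore infinite, which by the reduction above is exactly item (2). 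I expect the main obstacle to be the unboundedness step: one must convert the purely topological non-openness of $H\cap K$ into a genuinely unbounded sequence of finite indices, phrasing the directed-family and maximal-index argument carefully, so that a countable subfamily suffices and $G/N$ remains countably based.
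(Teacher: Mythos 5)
Your proof is correct, but it follows a different route from the paper's. The paper argues more directly: since $[H:H\cap K]$ is infinite, fix countably many elements $x_n\in H$ lying in pairwise distinct cosets of $H\cap K$; since $H\cap K=\bigcap_T (HT\cap KT)$ with $T$ ranging over the open normal subgroups of $G$, for each pair $i\neq j$ there is an open normal $N_{i,j}$ with $x_i^{-1}x_j\notin HN_{i,j}\cap KN_{i,j}$, and $N=\bigcap_{i,j}N_{i,j}$ works because the same witnesses $x_i$, which lie in $H\subseteq HN$, remain in distinct cosets of $HN\cap KN$ --- no index identity is needed. You instead reduce item (2) to the infinitude of $[H:H\cap KN]$ via the (correct) identity $[HN:HN\cap KN]=[H:H\cap KN]$, and then prove a genuinely quantitative intermediate fact: the finite indices $[H:H\cap KV]$, for $V$ open normal in $G$, are unbounded. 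Your maximality/directedness argument for this is sound --- if the indices were bounded, a maximizing $V_0$ would satisfy $H\cap KV_0\subseteq H\cap KV$ for all $V$, forcing $H\cap KV_0=H\cap K$ and contradicting infinite index --- and the countable intersection $N=\bigcap_n V^{(n)}$ then inherits $[H:H\cap KN]\geq n$ for every $n$ by monotonicity, while $G/N$ is countably based exactly as in the paper's construction. What your route buys is the unboundedness statement itself, which is of independent interest, and it avoids any choice of coset representatives; what the paper's route buys is brevity, since keeping the witnesses inside $H\subseteq HN$ makes the passage to statement (2) immediate, with no transfer identity required. (Your aside that (2) is equivalent to non-openness of $H\cap KN$ in $H$ is harmless but unnecessary, since your final argument establishes infinite index directly.)
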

\begin{proof}
Let $\mathcal N$ be the family of the open normal subgroups of $G$. We have $H=\cap_{R\in \mathcal N}HR$ and $K=\cap_{S\in \mathcal N}KS,$
hence $$H \cap K=\cap_{R,S \in \mathcal N}HR\cap KS=\cap_{R,S\in \mathcal N}H(R\cap S) \cap K(R\cap S)=\cap_{T\in \mathcal N}HT\cap KT.$$
Since the index of $H\cap K$ in $H$ is infinite, there exists a countable subset $\{x_n\}_{\in \mathbb N}$ of $H$ such that $x_i(H\cap K) \neq x_j(H\cap K)$ whenever $i\neq j.$ In particular
for every $i\neq j,$ there exists an open normal subgroup $N_{i,j}$ of $G$ such that $x_i^{-1}x_j\notin HN_{i,j}\cap KN_{i,j}.$ Let $N=\cap_{i,j}N_{i,j}.$ By construction, $x_i^{-1}x_j\notin HN\cap KN$ whenever $i\neq j$.
\end{proof}

% \begin{thm}
% 	Let $G$ be a  profinite group and let $P$ be a $p$-Sylow subgroup of $G.$  Suppose that $\mu(\nil_G(g))>0$ for every $g\in P.$
% 	Then the following hold:
% 	\begin{enumerate}
% 		\item $N_G(P)$ is an open subgroup of $G.$
% 		\item $P/O_p(G)$ is finite.
% 		\item $O_p(G)C_G(g)$ is open in $G$ for every $g \in P$.
% 		\item If $P$ is finite, then $C_G(P)$ is open in $G$.
% 	\end{enumerate}
% \end{thm}

\begin{proof}[Proof of \cref{localnilp}]
	Assume by contradiction that $H=N_G(P)$ is not open in $G.$ Then $H$ has infinite index in $G$, and therefore there exists a countable subset $\{g_n\}_{n\in \mathbb N}$ of $G$ such that $g_iH \neq g_jH$ whenever $i\neq j.$ Since $H$ is closed in $G$, for every pair $(i,j)$ with $i\neq j$ there exists an open normal subgroup $N_{i,j}$ of $G$ with $g_i^{-1}g_j\notin HN_{i,j}.$ Let $N=\cap_{i,j}N_{i,j}.$ Then $G/N$ is countably based, so by \cref{nilpel} $N_G(PN)$ has finite index in $G$. However $g_i^{-1}g_j$ does not normalize $PN$ if $i \neq j$, because $N_G(PN)=N_G(P)N=HN \subseteq HN_{i,j}$ where the first equality follows from the Frattini argument. This contradiction proves item (1). Now let $Q$ be another $p$-Sylow subgroup of $G$. Assume, by contradiction, that $|P:P\cap Q|$ is not finite. Then, by \cref{intersecochiusi}, there exists a closed normal subgroup $N$ of $G$ that $G/N$ is countably based and $|PN/N:PN/N\cap QN/N|=\infty,$ against \cref{nilpel} (2). Hence $|P: P\cap P^g|$ is finite for every $g\in G$. Since, by (1), $P$ has only finitely many conjugates in $G$, we deduce that $P/O_p(G)$ is finite, proving (2). Let $\Omega_p(G)$ be the set of $p$-elements of $G$. Since $|P/O_p(G)|$ and $|G:N_G(P)|$ are finite, there exists a finite subset $T$ of $G$ such that $\Omega_p(G) = \cup_{t\in T} tO_p(G).$ Now let $g\in \Omega_p(G).$ Since the Haar measure is invariant under conjugation, $\mu(\mathcal{N}_G(g))>0$. If $x\in \nil_G(g),$ then $x=x_1x_2$, where $x_1\in \Omega_p(G)$ and $[x_2,g]=1.$ Hence
    $$\nil_G(g)\subseteq \Omega_p(G)C_G(g)=\bigcup_{t\in T}tO_p(G)C_G(g).$$
	But then 
    \begin{align*}
    \eta & =\mu(\nil_G(g))\leq \mu(\cup_{t\in T}tO_p(G)C_G(g))=|T|\mu(O_p(G)C_G(g)) =\frac{|T|}{|G:O_p(G)C_G(g)|}
    \end{align*}
    and since $\eta > 0$, we conclude that 
	$|G:O_p(G)C_G(g)|^{-1}\geq \eta/|T|.$ This proves item (3). If $P$ is finite, then $O_p(G)$ is finite too hence $C_G(g)$ is open in $O_p(G) C_G(g)$ for every $g \in P$. Now item (3) implies that $C_G(g)$ is open in $G$ and, since $P$ is finite, $C_G(P) = \cap_{g \in P} C_G(g)$ is open too. This proves item (4).
\end{proof}

% \begin{prop}For every positive real number $\varepsilon$
% there exists a solvable profinite group $G$ which contains an element $g$ such that $\mu(\nil_G(g))>1-\varepsilon$,
% but $g$ has infinite order modulo $Z_\infty(G).$
% In particular, there is no open subgroup $H$ with $g\in Z_\infty(H)$.
% \end{prop}

Next, we show that \cref{nilpp} can't be generalized by considering the hypercenter instead of the Fitting subgroup.

\begin{proof}[Proof of \cref{nofiniteorder}]
Let $p$ and $q$ be two primes with $p$ dividing $q-1$ and consider the group $Y_t$ (associated to the primes $p$ and $q$) defined in the proof of \cref{allfingrps} (with the same notation) so that $|Y_t|=q^{n_t} p^{t+1}$ where $n_t=p^t$.  All the elements of $Y_t$ that are not contained in $N$ have order a $p$-power, and therefore $$\cfrac{|\Omega_p(Y_t)|}{|Y_t|} \geq \cfrac{|Y_t|-|N|}{|Y_t|} = 1-\cfrac{1}{p^{t+1}}.$$
 A $p$-Sylow subgroup $R$ of $Y_t$ has trivial normal core and $|Y_t:R|=q^{n_t}$. Thus, $Y_t$ embeds as a transitive subgroup into $\sym(q^{n_t})$ and we may consider the group $X_t = W_t \rtimes Y_t < C_{p^t} \wr Y_t$ where $C_{p^t}$ is cyclic of order $p^t$ and $W_t = \{(x_1,\ldots,x_{q^{n_t}}) \in (C_{p^t})^{q^{n_t}}\ |\ \prod_{i=1}^{q^{n_t}}x_i=1\}$. Then $X_t$ has trivial center, since the transitivity (and the faithfulness) of $Y_t$ implies that any element of $Z(X_t)$ has the form $(a,\ldots,a) \in (C_{p^t})^{q^{n_t}}$, and $a^{q^{n_t}} = \prod_{i=1}^{q^{n_t}}a = 1$ implies that $a=1$. Hence $\Z_\infty(X_t)=1.$ If we take any $g \in W_t \smallsetminus \{1\}$ and any $y \in \Omega_p(X_t)$, then $\la g,y\ra \leq \la O_p(X_t),y\ra$, which is a $p$-group, so $y \in \mathcal N_{X_t}(g)$. Notice also that every element in $W_t \Omega_p(Y_t)$ is a $p$-element and that
\[W_t \Omega_p(Y_t) = \bigcup_{b \in W_t} b\Omega_p(Y_t)\]
is a disjoint union. Therefore,
\[\cfrac{|\nil_{X_t}(g)|}{|X_t|} \geq \cfrac{|W_t||\Omega_p(Y_t)|}{|W_t||Y_t|}  \geq 1-\cfrac{1}{p^{t+1}}.\]
Now choose $s\in \mathbb N$ such that
$$\prod_{t \geq s}\left( 1- \cfrac{1}{p^{t+1}} \right) > 1-\varepsilon$$ and take the profinite group $G=\prod_{t\geq s} X_t.$ 
Clearly $Z_\infty(G)=\prod_{t\geq s} Z_\infty(X_t)=1$.
Now consider an element $g = (g_t)_{t \geq s}$ such that $g_t \in W_t$ has order $p^t$ for all $t$. Clearly $g$ has infinite order. On the other hand, $\nil_G(g)=\prod_{t\geq s}  \nil_G(g_t)$ and therefore
$$\mu(\nil_G(g)) \geq \prod_{t \geq s}\left( 1- \cfrac{1}{p^{t+1}} \right) > 1-\varepsilon.$$
For every $t\in \mathbb N,$ $X_t$ is solvable with derived length 4, so $G$ is solvable.

If $H$ is any open subgroup of $G$, then $g \not \in Z_{\infty}(H)$. Indeed, assume this is not true. There exists $r \geq s$ such that $N = \prod_{t \geq r} X_t \leq H$ and $N$ is an open normal subgroup of $G$. Now $\langle g \rangle \cap N \leq Z_{\infty}(H) \cap N \leq Z_{\infty}(N) = 1$, a contradiction since $g$ has infinite order.
\end{proof}

Finally, we prove a generalization of \cite[Theorem 29]{pel}.

\begin{proof}[Proof of \cref{classp}]Suppose $\mu(\mathcal P_p(G))>0$. Since $\mathcal P_p(G)\subseteq \mathcal S(G),$ it follows from \cref{descs} that $G$  is virtually prosolvable.
Let $C$ be the set of the elements $g$ in $G$ which centralize almost all the abelian chief factors of $G$ of order coprime to $p$, that is, there exists a positive integer $k$ such that, for every $N \unlhd_{\circ} G$, the set of abelian $p'$-chief factors of $G/N$ not centralized by $g$ has cardinality at most $k$. Then $C$ is a measurable subgroup of $G$. It follows from \cite[Corollary 24]{pel} that $\mathcal P_p(G)\subseteq C$, so $C$ has finite index and is closed in $G$. We claim that $C$ is virtually pro-$p.$ It is not restrictive to assume that $C$ is prosolvable and $O_p(C)=1.$ First, repeating the  arguments in \cite[Theorems 29 and 30]{pel} it can be proved that a $p$-Sylow subgroup $P$ of $C$ is finite. Since $P\leq C,$ every element of $P$ centralizes almost all the $p^\prime$-chief factors of $C$ and therefore there exists an open  normal subgroup $D$ of $C$ such that $P$ centralizes all  the $p^\prime$-chief factors of $D$. By \cite[Corollary 26]{pel} $P = O_p(PD)$, and therefore $D \subseteq N_C(P)$ so $|C:N_C(P)|$ is finite. But then $C$ contains only finitely many $p$-elements, so $\mathcal P_{p,C}(g)$
is finite for any $g \in C$. Thus, from the assumption $\mu(\mathcal P_p(G))>0$, we conclude that $C$, and hence $G$, is finite.
\end{proof}

%\textcolor{red}{Mi sembra che adesso le cose scritte nel paragrafo successivo sono già scritte nell'introduzione e quindi il paragrafo possa/debba essere tolto}

%We make a final remark. In \cite{aemp}, it is observed that the condition $\mathcal N(G) = G$ is stronger than $G$ being virtually pronilpotent; indeed, it suffices to consider the example $\mathbb Z_p \rtimes C_2 = \mathbb Z_p \rtimes \la x \ra$, where $p$ is an odd prime and $x$ acts on the $p$-adic integers by inversion, and observe that $\mathcal N_G(x) = \{1,x\}$. So it is natural to ask if $\mu(\mathcal N(G)) > 0$ if, and only if, $G$ is virtually pronilpotent. Compared to \cref{descs}, this is considerably harder to approach because we do not have a group-theoretical characterization of the {nilpotentizer} like we do for the {solvabilizer} nor do we know if $\mathcal N(G)$ is a subgroup. Hence, it would be desirable to find answers to \cref{q1} and \cref{q3} for the class of finite nilpotent groups.

\section{Measurable sets related to pronilpotency} \label{Lambda}

The proof of \cref{nobaer} relies on the following remark:

\begin{lemma}\label{sumcent}Let $G$ be a finite group, $p$ a prime, and let $N$ be a normal subgroup of $G$ whose order is coprime with $p.$ If $x$ is a $p$-element of $G$ and $y\in \Lambda_{G,p}(x)$, then
$\Lambda_{G,p}(x) \cap yN=yC_N(x^y)C_N(x)$.
\end{lemma}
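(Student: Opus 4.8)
The plan is to analyze the structure of $\Lambda_{G,p}(x) \cap yN$ directly. Fix a $p$-element $x$ and an element $y \in \Lambda_{G,p}(x)$, so $\langle x, x^y \rangle$ is a $p$-group. First I would observe that for any $z \in yN$ we have $z = yn$ for some $n \in N$, and the condition $z \in \Lambda_{G,p}(x)$ is that $\langle x, x^{yn} \rangle$ is a $p$-group. Since $N$ is a normal $p'$-subgroup, I expect the key to be understanding when $\langle x, x^{yn}\rangle$ remains a $p$-group as $n$ ranges over $N$, comparing with the known fact that $\langle x, x^y \rangle$ already is one.

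Next I would reduce the membership condition to a statement about centralizers. The natural approach is to note that both $x$ and $x^{yn} = (x^y)^n$ are $p$-elements, and $\langle x, x^{yn}\rangle$ is a $p$-group precisely when these two $p$-elements generate a $p$-group. Writing $w = x^y$, we are comparing $\langle x, w\rangle$ (a $p$-group by hypothesis) with $\langle x, w^n\rangle$. The plan is to use a Hall-type / conjugacy argument, entirely analogous to the one in \cref{nilabcf}, to show that $\langle x, w^n\rangle$ is a $p$-group if and only if $n$ can be adjusted, modulo the relevant centralizers, so that $w^n$ lies in a common $p$-subgroup with $x$. I would argue that the set of admissible $n \in N$ is exactly $C_N(w) C_N(x) = C_N(x^y) C_N(x)$: the factor $C_N(x^y)$ accounts for the elements of $N$ fixing $w = x^y$ (so that $w^n = w$ and the subgroup is unchanged), while the factor $C_N(x)$ accounts for the freedom to conjugate within $N$ without disturbing $x$, which preserves the property of generating a $p$-group together with $x$.

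Concretely, for the inclusion $\supseteq$, if $n = ab$ with $a \in C_N(x^y)$ and $b \in C_N(x)$, then $x^{yn} = (x^y)^{ab} = ((x^y)^b) = $ (after using $a$ fixes $x^y$) a conjugate of $x^y$ by an element centralizing $x$; since $b$ centralizes $x$, conjugation by $b$ carries the $p$-group $\langle x, x^y\rangle = \langle x^b, x^{yb}\rangle = \langle x, x^{yn}\rangle$, so $yn \in \Lambda_{G,p}(x)$. For the inclusion $\subseteq$, if $yn \in \Lambda_{G,p}(x)$ then both $\langle x, x^y\rangle$ and $\langle x, x^{yn}\rangle$ are $p$-groups, and I would use that $x$ and $x^y$ lie in a Sylow $p$-subgroup $P$ of $\langle x, x^y \rangle N$ while $x$ and $x^{yn}$ lie in another such Sylow subgroup; by Sylow's theorem conjugacy and the coprimality of $|N|$, the conjugating element can be taken in $N$ and then split to show $n \in C_N(x^y)C_N(x)$. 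The main obstacle I anticipate is exactly this decomposition step: showing that the element of $N$ conjugating one Sylow configuration to the other can be factored as a product of a centralizer of $x$ and a centralizer of $x^y$. This is the heart of the lemma and mirrors the commutator bookkeeping in the proof of \cref{nilabcf}, so I would model that argument closely, using the normality and $p'$-order of $N$ together with the identity $x^{yn} = (x^y)^n$ to control how $n$ acts.

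Finally, I would record that $C_N(x^y) C_N(x)$ is genuinely the product set appearing in the statement (not necessarily a subgroup, but a well-defined subset of $N$), so that $\Lambda_{G,p}(x)\cap yN = y\, C_N(x^y) C_N(x)$ as claimed. This identification then feeds directly into the measure/counting estimate needed for \cref{nobaer}, since the size of this coset is $|C_N(x^y) C_N(x)|$, which can be computed via the standard formula $|C_N(x^y)||C_N(x)|/|C_N(x^y)\cap C_N(x)|$.
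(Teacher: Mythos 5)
Your strategy is the same as the paper's, and your proof of the inclusion $\supseteq$ is essentially complete: writing $n=ab$ with $a \in C_N(x^y)$, $b \in C_N(x)$, conjugation by $b$ carries the $p$-group $\langle x, x^y\rangle$ onto $\langle x, x^{yn}\rangle$ (modulo the slip where you wrote this as an equality of subgroups rather than as a conjugation). The problem is the inclusion $\subseteq$: you set it up correctly (with $P:=\langle x, x^y\rangle$, both $P$ and $\langle x, x^{yn}\rangle$ are $p$-subgroups of $PN$, $P$ is a Sylow $p$-subgroup of $PN$ by coprimality, and by Sylow's theorem $\langle x, x^{yn}\rangle \leq P^m$ for some $m \in N$), but then you stop at what you yourself call ``the main obstacle'': the factorization $n \in C_N(x^y)C_N(x)$. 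Announcing that you would ``model the argument closely'' on \cref{nilabcf} is not a proof, and the part of \cref{nilabcf} you invoke --- the commutator bookkeeping $[y_1,n_2]=[y_2,n_1]$ --- is not the relevant one; what does the job there is the trivial-intersection argument that follows the application of Hall's theorem.

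The missing step is short, and it is exactly how the paper concludes. Since $P^m$ is a $p$-group and $N$ is a normal $p'$-subgroup, $P^m \cap N = 1$, so each coset $hN$ with $h \in P$ contains at most one element of $P^m$; moreover $h^m \in hN$ because $[h,m]\in N$. Apply this twice. First, $x$ and $x^m$ both lie in $xN \cap P^m$ (here you use $x \in \langle x, x^{yn}\rangle \leq P^m$), hence $x^m=x$, i.e. $m \in C_N(x)$. Second, $x^{yn}=(x^y)^n$ and $(x^y)^m$ both lie in $x^yN \cap P^m$, hence $x^{yn}=x^{ym}$, i.e. $nm^{-1} \in C_N(x^y)$. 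Therefore $n=(nm^{-1})m \in C_N(x^y)C_N(x)$, which is precisely the decomposition your outline defers. Without this uniqueness-of-coset-representatives argument (or an equivalent), the forward inclusion --- the substantive half of the lemma, and the one needed for \cref{nobaer} --- remains unproved.
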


\begin{proof}
Notice that $P:=\langle x, x^y\rangle$ is a $p$-Sylow subgroup of $PN.$ Now if $yn \in \Lambda_{G,p}(x) \cap yN,$ then $\langle x, x^{yn}\rangle=P^m$ for some $m\in N.$ If $h \in P$, then $|hN \cap P^m| \leq 1$ and moreover $h^m \in hN$, so $x^m=x$ and $x^{ym}=x^{yn}$. Therefore $m \in C_N(x)$ and
$mn^{-1}\in C_N(x^y).$ It follows that $yn \in \Lambda_{G,p}(x) \cap yN$ if and only if $n\in C_N(x^y)C_N(x)$.
\end{proof}

%\begin{proof}[Proof of \cref {nobaer}]
%Let $q$ be a prime such that $p$ divides $q-1$ and choose $n\in \mathbb N$ such that $n-1\geq \eta n.$
%${qn-q+1}\geq \eta qn.$ 
%Consider a cyclic
%group $A=\langle a\rangle$ of order $p$ and let $\mathbb F_q$ be the field with $q$ elements. Choose $\lambda \in \mathbb F_q^\times$ with $|\lambda|=p.$ Then
%$A$ acts on $\mathbb F_q$ by setting $f^a=\lambda f$ for every $f\in \mathbb F_q.$ Now let $\sigma=(1,2,\dots,n)\in \perm(n)$ and let $C=\langle \sigma \rangle \leq \perm(n).$ The action of $A$ on $\mathbb F_q$ gives rise to an action of the wreath product $H=A\wr C$ on the vector space $V=\mathbb F_q^n$ of dimension $n$ over $\mathbb F_q.$ We consider the semidirect product $G=V\rtimes H \leq {\rm{AGL}}(n,q).$ Since $\fit(G)=V,$ we have $O_p(G)=1.$ Let $x=(a,1,\dots,1)\in A^n\leq H.$
%Then $$C_V(x)=\{(0,f_2,\dots,f_n)\mid f_2,\dots,f_n\in V\}.$$
%For $1\leq i\leq n-1$ let $\Sigma_i:=A^n\sigma^i\ \subseteq H.$ If $g\in \Sigma_i$, we have
%$$C_V(x^g)=C_V(x^{\sigma^i})=\{(f_1,\dots,f_i,0,f_{i+2},\dots f_n)\mid f_1,\dots,f_i,f_{i+2},\dots,f_n\in V\}.$$
%In particular, $C_V(x)+C_V(x^{g})=V$. Since $g\in \Lambda_{G,p}(x)$, it follows from \cref{sumcent} that $Vg\subseteq \Lambda_{G,p}(x).$
%We have so proved that
%$\bigcup_{1\leq i\leq n-1}VA^n\sigma^i \subseteq \Lambda_{G,p}(x).$ 
%Moreover $C_G(x)\cong \mathbb F_q^{n-1}\langle x\rangle \subseteq \Lambda_{G,p}(x).$
%But then
%$$\frac{|\Lambda_{G,p}(x)|}{|G|}\geq\frac{n-1}{n}\geq \eta. \qedhere$$
%+\frac{1}{qn}=\frac{qn-q+1}{qn}\geq \eta.\qedhere $$
%\end{proof}

\begin{proof}[Proof of \cref{nobaer}]
Choose a prime $q$ such that $p^t$ divides $q-1$ and choose a positive integer $n$ such that $n-1\geq\eta n$. Let $A=\la a \ra$ be the cyclic group of order $p^t$,  let $\mathbb F_q$ be the field with $q$ elements and choose $\lambda \in \mathbb F_q^\times$ of order $p^t$. Then $A$ acts on $\mathbb F_q$ by $f^a = \lambda f$ for every $f \in \mathbb F_q$. Now, let $\sigma=(1,2,\dots,n)\in \perm(n)$
and let $C=\langle \sigma \rangle \leq \perm(n).$ The action of $A$ on $\mathbb F_q$ gives rise to an action of the wreath product $H=A\wr C$ on the vector space $V=\mathbb F_q^n$ of dimension $n$ over $\mathbb F_q.$ We consider the semidirect product $G=V\rtimes H \leq {\rm{AGL}}(n,q).$ {Since $H$ acts faithfully on $V$, it follows that $O_p(G)=1$.} Let $x=(a,1,\dots,1)\in A^n\leq H.$
Then $$C_V(x)=\{(0,f_2,\dots,f_n)\mid f_2,\dots,f_n\in V\}.$$
For $1\leq i\leq n-1$
let $\Sigma_i:=A^n\sigma^i\ \subseteq H.$ If $g\in \Sigma_i$, we have
$$C_V(x^g)=C_V(x^{\sigma^i})=\{(f_1,\dots,f_i,0,f_{i+2},\dots f_n)\mid f_1,\dots,f_i,f_{i+2},\dots,f_n\in V\}.$$
In particular, $C_V(x)+C_V(x^{g})=V$. Since $g\in \Lambda_{G,p}(x)$, it follows from \cref{sumcent} that $Vg\subseteq \Lambda_{G,p}(x).$
We have so proved that
$\bigcup_{1\leq i\leq n-1}VA^n\sigma^i \subseteq \Lambda_{G,p}(x).$ 
%Moreover $C_G(x)\cong \mathbb F_q^{n-1}\langle x\rangle \subseteq \Lambda_{G,p}(x).$
But then
$$\frac{|\Lambda_{G,p}(x)|}{|G|}\geq\frac{n-1}{n}\geq \eta. \qedhere$$
\end{proof}

As we have already recalled, by \cite[Theorem 1]{aemp} if $\mu(\nil_G(g))>0$ for every $g\in G$ then $G$ is virtually pronilpotent. This motivates the following question:
\begin{question}
	Let $G$ be a profinite group. Suppose that $\mu(\Lambda_{G}(g))>0$ for every  $g\in G.$ Is $G$ virtually pronilpotent?
\end{question}

We also propose the following related question.

\begin{question}
    Let $G$ be a profinite group and suppose that $$\mu(\{(x,g) \in G^2 \mid \la x,x^g\ra \text{ is pronilpotent}\})>0.$$ Is $G$ virtually pronilpotent?
\end{question}


\begin{thebibliography}{1}
	\bibitem{az} A. Abdollahi and M. Zarrin, {Non-nilpotent graph of a group}, Comm. Algebra 38 (2010) 4390--4403.
%\bibitem{aschbacher} M. Aschbacher, Finite group theory, Cambridge Studies in Advanced Mathematics, 10, Cambridge University Press, Cambridge, 1986. 
%\bibitem{bhrd} J. Bray, D. Holt, C. Roney-Dougal, The maximal subgroups of the low-dimensional finite classical groups.With a foreword by Martin Liebeck. London Mathematical Society Lecture Note Series, 407. Cambridge University Press, Cambridge, 2013. 
	\bibitem{bm} W. R. Brian and M. W. Mislove,
	Every infinite compact group can have a non-measurable subgroup,
	Topology Appl. 210 (2016), 144–146.
%    \bibitem{bln} T. C. Burness, A. Lucchini and D. Nemmi, On the soluble graph of a finite group, Journal of Combinatorial Theory, Series A, Volume 194, 2023.
%    \bibitem{cc1} C. Casolo, Finite groups in which subnormalizers are subgroups, Rend. Sem. Mat. Univ. Padova 82 (1989), 25--53. 
%	\bibitem{cc2}  C. Casolo,  Subnormalizers in finite groups, Comm. Algebra 18 (1990), no. 11, 3791–3818.
%    \bibitem{engel} E. Detomi, A. Lucchini and D. Nemmi, The Engel graph of a finite group, Forum Math. 35 (2023), no. 1, 111–122.
	\bibitem{aemp} E. Detomi, A. Lucchini, M. Morigi and   P. Shumyatsky,
	Probabilistic properties of profinite groups, arXiv:2304.04573, to appear in Israel J. Math.
	\bibitem{dh} K.	Doerk and T. Hawkes, Finite soluble groups, De Gruyter Expositions in Mathematics, 4. Walter de Gruyter \& Co., Berlin, 1992. 
	\bibitem{fgg}J. Fulman, D. Garzoni and R. Guralnick, Probabilistic Generation of Finite Almost Simple Groups, 
Adv. Math. 452 (2024), Paper No. 109796, 33 pp.
	%\bibitem{ghe} P. Gheri,  On the number of $p$-elements in a finite group, Ann. Mat. Pura Appl. (4) 200 (2021), no. 3, 1231--1243.
% \bibitem{pen}   R. Guralnick, T. Penttila, C. Praeger and J. Saxl, Linear groups with orders having certain large prime divisors,Proc. London Math. Soc. (3) 78 (1999), no. 1, 167–214.
   \bibitem{gor} D. Gorenstein, Finite Simple Groups: An Introduction to Their Classification, University Series in Mathematics, Plenum Publishing Corp., New York, 1982.
\bibitem{gk} R. M. Guralnick and W. M. Kantor, Probabilistic generation of finite simple groups,
J. Algebra 234 (2000), no. 2, 743–792.
% \bibitem{huppert} B.   Huppert,  Endliche Gruppen, I. Die Grundlehren der mathematischen Wissenschaften, Band 134. Springer-Verlag, Berlin-New York, 1967. xii+793 pp
% \bibitem{jones}   G. Jones, Primitive permutation groups containing a cycle,  Bull. Aust. Math. Soc. 89 (2014), no. 1, 159–165.
%\bibitem{kl} P. Kleidman and M. Liebeck, The subgroup structure of the finite classical groups, London Mathematical Society Lecture Note Series, 129. Cambridge University Press, Cambridge, 1990.
	\bibitem{ls} M. Larsen and A. Shalev,  Hausdorff dimension and randomly free groups,
	Math. Ann. 371 (2018), no. 3-4, 1409--1427.
%    \bibitem{lss} J. C. Lennox and S. E. Stonehewer, Subnormal subgroups of groups, New York: Oxford University Press, 1987.
 %   \bibitem{lps} M. Liebeck, C. Praeger and J.  Saxl, The maximal factorizations of the finite simple groups and their automorphism groups. Mem. Amer. Math. Soc. 86 (1990), no. 432, 
 \bibitem{LP} L. L\'evai and L. Pyber, Profinite groups with many commuting pairs or involutions, Arch. Math. 75 (2000), 1--7. Birkh\"auser Verlag, Basel, 2000.
	\bibitem{solv} A. Lucchini,  Solubilizers in profinite groups, J. Algebra 647 (2024), 619–632.
	\bibitem{pel}  A. Lucchini and N. Otmen, $p$-elements in profinite groups,  Monatsh. Math. 206 (2025), no. 4, 933–948.
%	\bibitem{moller} J. M. M\o ller,  The number of $p$-elements in finite groups of Lie type of characteristic $p$, Proc. Amer. Math. Soc. 149 (2021), no. 7, 2805–2812.
%\bibitem{mp} G. L. Morgan, C. W. Parker, The diameter of the commuting graph of a finite group with trivial centre, {J. Algebra}{393} (2013), 41--59.
%    \bibitem{rib} L. Ribes, Profinite graphs and groups, Ergebnisse der Mathematik und ihrer Grenzgebiete. 3. Folge. A Series of Modern Surveys in Mathematics, 66, Springer, Cham, 2017.
%    \bibitem{rz} L. Ribes and P. Zalesskii, Profinite groups, Ergebnisse der Mathematik und ihrer Grenzgebiete. 3. Folge. A    Series of Modern Surveys in Mathematics 40, Springer-Verlag, Berlin, 2000.
 %   \bibitem{dsss} M. du Sautoy, D. Segal and A. Shalev (editors), New horizons in pro-$p$ groups, Birhkhauser Boston, Inc., Boston, 2000.
%\bibitem{pop}   F. Pop, On prosolvable subgroups of profinite free products and some applications, Manuscripta Math. 86 (1995), no. 1, 125–135. 
   % \bibitem{pass} D. Passman, Permutation groups. Revised reprint of the 1968 original. Dover Publications, Inc., Mineola, NY, 2012.
 %   \bibitem{reid} C. D. Reid, The Generalised Pro-Fitting Subgroup of a Profinite Group. Communications in Algebra, 41(1), 2013, 294–308.
  %  \bibitem{rob} D. J. S. Robinson, A course in the theory of groups, Graduate Texts in Mathematics, 80, Springer-Verlag New York, 1996.
    \bibitem{hyper} P.	Schmid, 
	{The hypercenter of a profinite group},
	Beitr. Algebra Geom. 55 (2014), 645--648.
	\bibitem{sh} A. Shalev,  Profinite groups with restricted centralizers, Proc. Amer. Math. Soc. 122 (1994), no. 4, 1279–1284.
    \bibitem{mv} M. Vannacci, On hereditarily just infinite profinite groups obtained via iterated wreath products, J. Group Theory 19 (2016), no.~2, 233--238.
  
	
%	\bibitem{wilson} J. S. Wilson,   Profinite Groups. Clarendon Press, Oxford, 1998.
%	\bibitem{zel} E.I. Zelmanov, On periodic compact groups, Israel J. Math. 77 (1992), 83--95.
\end{thebibliography}
	\end{document}